\DeclareRobustCommand
\algrenewcommand\algorithmicrequire{\textbf{Input:}}
\algrenewcommand\algorithmicensure{\textbf{Output:}}
\DeclareRobustCommand
\algrenewcommand\algorithmicrequire{\textbf{Input:}}
\algrenewcommand\algorithmicensure{\textbf{Output:}}
\newtheorem{theorem}{Theorem}[subsection]
\theoremstyle{remark}
\newtheorem{definition}{Definition}[subsection]
\algnewcommand\algorithmicforeach{\textbf{for each}}
\DeclareRobustCommand
\title{Robust estimations from distribution structures: I. Mean}
\author[a,b,c,1]{Li Tuobang}
\keywords{semiparametric $|$ mean-median-mode inequality $|$ asymptotic $|$ unimodal $|$ Hodges–Lehmann estimator}
\begin{abstract}
As the most fundamental problem in statistics, robust location estimation has many prominent solutions, such as the trimmed mean, Winsorized mean, Hodges–Lehmann estimator, Huber $M$-estimator, and median of means. Recent studies suggest that their maximum biases concerning the mean can be quite different, but the underlying mechanisms largely remain unclear. This study exploited a semiparametric method to classify distributions by the asymptotic orderliness of quantile combinations with varying breakdown points, showing their interrelations and connections to parametric distributions. Further deductions explain why the Winsorized mean typically has smaller biases compared to the trimmed mean; two sequences of semiparametric robust mean estimators emerge, particularly highlighting the superiority of the median Hodges–Lehmann mean. This article sheds light on the understanding of the common nature of probability distributions.

\end{abstract}
\begin{document}

\maketitle
\thispagestyle{firststyle}
\ifthenelse{\boolean{shortarticle}}{\ifthenelse{\boolean{singlecolumn}}{\abscontentformatted}{\abscontent}}{}


\dropcap{I}n 1823, Gauss \cite{gauss1823theoria} proved that for any unimodal distribution, $\left|m-\mu\right|\le\sqrt{\frac{3}{4}}\omega$ and $\sigma\leq \omega\leq 2\sigma$, where $\mu$ is the population mean, $m$ is the population median, $\omega$ is the root mean square deviation from the mode, and $\sigma$ is the population standard deviation. This pioneering work revealed that, the potential bias of the median with respect to the mean is bounded in units of a scale parameter under certain assumptions. In 2018, Li, Shao, Wang, and Yang \cite{li2018worst} proved the bias bound of any quantile for arbitrary continuous distributions with finite second moments. Bernard, Kazzi, and Vanduffel (2020) \cite{bernard2020range} further refined these bounds for unimodal distributions with finite second moments and extended to the bounds of symmetric quantile averages. They showed that $m$ has the smallest maximum distance to $\mu$ among all symmetric quantile averages ($\text{SQA}_{\epsilon}$). Daniell, in 1920, \cite{daniell1920observations} analyzed a class of estimators, linear combinations of order statistics, and identified that the $\epsilon$-symmetric trimmed mean ($\text{STM}_{\epsilon}$) belongs to this class. Another popular choice, the $\epsilon$-symmetric Winsorized mean ($\text{SWM}_{\epsilon}$), named after Winsor and introduced by Tukey \cite{tukey1960survey} and Dixon \cite{dixon1960simplified} in 1960, is also an $L$-estimator. Bieniek (2016) derived exact bias upper bounds of the Winsorized mean based on Danielak and Rychlik's work (2003) on the trimmed mean for any distribution with a finite second moment and confirmed that the former is smaller than the latter \cite{danielak2003theory,bieniek2016comparison}. Oliveira and Orenstein (2019) and 
Lugosi and Mendelson (2021) \cite{oliveira2019sub,trimmedMendelson} derived the concentration bound of the trimmed mean. In 1963, Hodges and Lehmann \cite{hodges1963estimates} proposed a class of nonparametric location estimators based on rank tests and, from the Wilcoxon signed-rank statistic \cite{wilcoxon1945individual}, deduced the median of pairwise means as a robust location estimator for a symmetric population. Both $L$-statistics and $R$-statistics achieve robustness essentially by removing a certain proportion of extreme values, therefore, they have predefined breakdown points \cite{hampel1968contributions}. In 1964, Huber \cite{huber1964robust} generalized maximum likelihood estimation to the minimization of the sum of a specific loss function, which measures the residuals between the data points and the model's parameters. Some $L$-estimators are also $M$-estimators, e.g., the sample mean is an $M$-estimator with a squared error loss function, the sample median is an $M$-estimator with an absolute error loss function \cite{huber1964robust}. The Huber $M$-estimator is obtained by applying the Huber loss function that combines elements of both squared error and absolute error to achieve robustness against gross errors and high efficiency for contaminated Gaussian distributions \cite{huber1964robust}. Sun, Zhou, and Fan (2020) examined the concentration bounds of the Huber $M$-estimator \cite{sun2020adaptive}. In 2012, Catoni proposed an $M$-estimator for heavy-tailed samples
with finite variance \cite{catoni2012challenging}. Xu (2021) \cite{XuCatoni} proposed a generalized Catoni $M$-estimator and showed that it has a better worse-case performance than the empirical mean. Mathieu (2022) \cite{mathieu2022concentration} further derived the concentration bounds of $M$-estimators and demonstrated that, by selecting the tuning parameter which depends on the variance, these $M$-estimator can also be a sub-Gaussian estimator. The concept of the median of means ($\text{MoM}_{k,b=\frac{n}{k},n}$) was first introduced by Nemirovsky and Yudin (1983) in their work on stochastic optimization \cite{nemirovskij1983problem}, while later was revisited in Jerrum, Valiant, and Vazirani (1986), \cite{jerrum1986random} and Alon, Matias and Szegedy (1996) \cite{alon1996space}'s works. Given its good performance even for distributions with infinite second moments, the MoM has received increasing attention over the past decade \cite{hsu2014heavy,devroye2016sub,laforgue2019medians,lecue2020robust}. Devroye, Lerasle, Lugosi, and Oliveira (2016) showed that $\text{MoM}_{k,b=\frac{n}{k},n}$ nears the optimum of sub-Gaussian mean estimation with regards to concentration bounds when the distribution has a heavy tail \cite{devroye2016sub}. Laforgue, Clemencon, and Bertail (2019) proposed the median of randomized means ($\text{MoRM}_{k,b,n}$) \cite{laforgue2019medians}, wherein, rather than partitioning, an arbitrary number, $b$, of blocks are built independently from the sample, and showed that $\text{MoRM}_{k,b,n}$ has a better non-asymptotic sub-Gaussian property compared to $\text{MoM}_{k,b=\frac{n}{k},n}$. In fact, asymptotically, the Hodges-Lehmann (H-L) estimator is equivalent to $\text{MoM}_{k=2,b=\frac{n}{k}}$ and $\text{MoRM}_{k=2,b}$, and they can be seen as the pairwise mean distribution is approximated by the sampling without replacement and bootstrap, respectively. When $k\ll n$, the difference between sampling with replacement and without replacement is negligible. For the asymptotic validity, readers are referred to the foundational works of Efron (1979) \cite{efron1979bootstrap}, Bickel and Freedman (1981, 1984) \cite{bickel1981some,bickel1984asymptotic}, and Helmers, Janssen, and Veraverbeke (1990) \cite{helmers1990bootstrapping}. 

Here, the $\epsilon$,$b$-stratified mean is defined as \begin{align*}\text{SM}_{\epsilon,b,n}\coloneqq\frac{b}{n}\left(\sum_{j=1}^{\frac{b-1}{2b\epsilon}}\sum_{i_j=\frac{\left(2bj-b-1\right)n\epsilon}{b-1}+1}^{\frac{\left(2bj-b+1\right)n\epsilon}{b-1}}X_{i_j}\right)\text{,}\end{align*} where $X_1\le \ldots \le X_n$ denote the order statistics of a sample of $n$ independent and identically distributed random variables $X_1$, $\ldots$, $X_n$. $b\in\mathbb{N}$, $b\geq3$, and $b\text{ mod }2=1$. The definition was further refined to guarantee the continuity of the breakdown point by incorporating an additional block in the center when $\lfloor\frac{b-1}{2b\epsilon}\rfloor\text{ mod }2=0$, or by adjusting the central block when $\lfloor\frac{b-1}{2b\epsilon}\rfloor\text{ mod }2=1$ (SI Text). If the subscript $n$ is omitted, only the asymptotic behavior is considered. If $b$ is omitted, $b=3$ is assumed. $\text{SM}_{\epsilon,b=3}$ is equivalent to $\text{STM}_{\epsilon}$, when $\epsilon>\frac{1}{6}$. When $\frac{b-1}{2\epsilon}\in\mathbb{N}$, the basic idea of the stratified mean is to distribute the data into $\frac{b-1}{2\epsilon}$ equal-sized non-overlapping blocks according to their order. Then, further sequentially group these blocks into $b$ equal-sized strata and compute the mean of the middle stratum, which is the median of means of each stratum. In situations where $i\text{ mod }1\neq0$, a potential solution is to generate multiple smaller samples that satisfy the equality by sampling without replacement, and subsequently calculate the mean of all estimations. The details of determining the smaller sample size and the number of sampling times are provided in the SI Text. Although the principle resembles that of the median of means, $\text{SM}_{\epsilon,b,n}$ is different from $\text{MoM}_{k=\frac{n}{b},b,n}$ as it does not include the random shift. Additionally, the stratified mean differs from the mean of the sample obtained through stratified sampling methods, introduced by Neyman (1934) \cite{neyman1934two} or ranked set sampling \cite{mcintyre1952method}, introduced by McIntyre in 1952, as these sampling methods aim to obtain more representative samples or improve the efficiency of sample estimates, but the sample means based on them are not robust. When $b\text{ mod }2=1$, the stratified mean can be regarded as replacing the other equal-sized strata with the middle stratum, which, in principle, is analogous to the Winsorized mean that replaces extreme values with less extreme percentiles. Furthermore, while the bounds confirm that the Winsorized mean and median of means outperform the trimmed mean \cite{bieniek2016comparison,danielak2003theory,devroye2016sub} in worst-case performance, the complexity of bound analysis makes it difficult to achieve a complete and intuitive understanding of these results. Also, a clear explanation for the average performance of them remains elusive. The aim of this paper is to define a series of semiparametric models using the signs of derivatives, reveal their elegant interrelations and connections to parametric models, and show that by exploiting these models, two sets of sophisticated mean estimators can be deduced, which exhibit strong robustness to departures from assumptions. 

\section*{Quantile Average and Weighted Average}\label{QA} The symmetric trimmed mean, symmetric Winsorized mean, and stratified mean are all $L$-estimators. More specifically, they are symmetric weighted averages, which are defined as \begin{align*}\text{SWA}_{\epsilon,n}\coloneqq\frac{\sum_{i=1}^{\lceil\frac{n}{2}\rceil}{\frac{X_i+X_{n-i+1}}{2}w_i}}{\sum_{i=1}^{\lceil\frac{n}{2}\rceil}{w_i}}\text{,}\end{align*}where $w_i$s are the weights applied to the symmetric quantile averages according to the definition of the corresponding $L$-estimators. For example, for the $\epsilon$-symmetric trimmed mean, $w_{i}=\left\{\begin{array}{@{}ll@{}}0, & i<n\epsilon \\1, & i\geq n\epsilon\end{array}\right.$, when $n\epsilon\in\mathbb{N}$. The mean and median are indeed two special cases of the symmetric trimmed mean. 

To extend the symmetric quantile average to the asymmetric case, two definitions for the $\epsilon$,$\gamma$-quantile average ($\text{QA}_{\epsilon,\gamma,n}$) are proposed. The first definition is: \begin{equation}\begin{split}\frac{1}{2}(\hat{Q}_{n}(\gamma\epsilon)+\hat{Q}_{n}(1-\epsilon))\text{,}\label{eq:1}\end{split}\end{equation}and the second definition is: \begin{equation}\begin{split}\frac{1}{2}(\hat{Q}_{n}(\epsilon)+\hat{Q}_{n}(1-\gamma\epsilon))\text{,}\label{eq:2}\end{split}\end{equation}\noindent where $\hat{Q}_{n}(p)$ is the empirical quantile function; $\gamma$ is used to adjust the degree of asymmetry, $\gamma\geq0$; and $0\leq\epsilon\leq\frac{1}{1+\gamma}$. For trimming from both sides, [\ref{eq:1}] and [\ref{eq:2}] are essentially equivalent. The first definition along with $\gamma\geq0$ and $0\leq\epsilon\leq\frac{1}{1+\gamma}$ are assumed in the rest of this article unless otherwise specified, since many common asymmetric distributions are right-skewed, and [\ref{eq:1}] allows trimming only from the right side by setting $\gamma=0$.

Analogously, the weighted average can be defined as \begin{align*}\text{WA}_{\epsilon,\gamma,n}\coloneqq\frac{\int_{0}^{\frac{1}{1+\gamma}}{\mathrm{QA}\left(\epsilon_0,\gamma,n\right)w(\epsilon_0) d\epsilon_0}}{\int_{0}^{\frac{1}{1+\gamma}}w(\epsilon_0) d\epsilon_0}\text{.}\end{align*}For any weighted average, if $\gamma$ is omitted, it is assumed to be 1. The $\epsilon,\gamma$-trimmed mean ($\text{TM}_{\epsilon,\gamma,n}$) is a weighted average with a left trim size of $n\gamma\epsilon$ and a right trim size of $n\epsilon$, where $w(\epsilon_0)=\left\{\begin{array}{@{}ll@{}}0, & \epsilon_0<\epsilon \\1, & \epsilon_0\geq \epsilon\end{array}\right.$. Using this definition, regardless of whether $n\gamma\epsilon \notin\mathbb{N}$ or $n\epsilon\notin\mathbb{N}$, the TM computation remains the same, since this definition is based on the empirical quantile function. However, in this article, considering the computational cost in practice, non-asymptotic definitions of various types of weighted averages are primarily based on order statistics. Unless stated otherwise, the solution to their decimal issue is the same as that in SM.

Furthermore, for weighted averages, separating the breakdown point into upper and lower parts is necessary.
\begin{definition}[Upper/lower breakdown point]\label{rbp}
The upper breakdown point is the breakdown point generalized in Davies and Gather (2005)'s paper \cite{Breakdown1}. The finite-sample upper breakdown point is the finite sample breakdown point defined by Donoho and Huber (1983) \cite{donoho1983notion} and also detailed in \cite{Breakdown1}. The (finite-sample) lower breakdown point is replacing the infinity symbol in these definitions with negative infinity.
\end{definition}

\section*{Classifying Distributions by the Signs of Derivatives}\label{A}Let $\mathcal{P}_{\mathbb{R}}$ denote the set of all continuous distributions over $\mathbb{R}$ and $\mathcal{P}_{\mathbb{X}}$ denote the set of all discrete distributions over a countable set $\mathbb{X}$. The default of this article will be on the class of continuous distributions, $\mathcal{P}_{\mathbb{R}}$. However, it's worth noting that most discussions and results can be extended to encompass the discrete case, $\mathcal{P}_{\mathbb{X}}$, unless explicitly specified otherwise. Besides fully and smoothly parameterizing them by a Euclidean parameter or merely assuming regularity conditions, there exist additional methods for classifying distributions based on their characteristics, such as their skewness, peakedness, modality, and supported interval. In 1956, Stein initiated the study of estimating parameters in the presence of an infinite-dimensional nuisance shape parameter \cite{stein1956efficient} and proposed a necessary condition for this type of problem, a contribution later explicitly recognized as initiating the field of semiparametric statistics \cite{bickel1982adaptive}. In 1982, Bickel simplified Stein's general heuristic necessary condition \cite{stein1956efficient}, derived sufficient conditions, and used them in formulating adaptive estimates \cite{bickel1982adaptive}. A notable example discussed in these groundbreaking works was the adaptive estimation of the center of symmetry for an unknown symmetric distribution, which is a semiparametric model. In 1993, Bickel, Klaassen, Ritov, and Wellner published an influential semiparametrics textbook \cite{bickel1993efficient}, which categorized most common statistical models as semiparametric models, considering parametric and nonparametric models as two special cases within this classification. Yet, there is another old and commonly encountered class of distributions that receives little attention in semiparametric literature: the unimodal distribution. It is a very unique semiparametric model because its definition is based on the signs of derivatives, i.e., ($f'(x)>0$ for $x\le M$) $\wedge$ ($f'(x)<0$ for $x\geq M$), where $f(x)$ is the probability density function (pdf) of a random variable $X$, $M$ is the mode. Let $\mathcal{P}_{U}$ denote the set of all unimodal distributions. There was a widespread misbelief that the median of an arbitrary unimodal distribution always lies between its mean and mode until Runnenburg (1978) and van Zwet (1979) \cite{runnenburg1978mean,zwet1979mean} endeavored to determine sufficient conditions for the mean-median-mode inequality to hold, thereby implying the possibility of its violation. The class of unimodal distributions that satisfy the mean-median-mode inequality constitutes a subclass of $\mathcal{P}_{U}$, denoted by $\mathcal{P}_{MMM}\subsetneq\mathcal{P}_{U}$. To further investigate the relations of location estimates within a distribution, the $\gamma$-orderliness for a right-skewed distribution is defined as \begin{align*}\forall 0\leq\epsilon_1\leq\epsilon_2\leq\frac{1}{1+\gamma}, \text{QA}(\epsilon_1,\gamma) \geq \text{QA}(\epsilon_2,\gamma).\end{align*} 

The necessary and sufficient condition below hints at the relation between the mean-median-mode inequality and the $\gamma$-orderliness. \begin{theorem}\label{q1}A distribution is $\gamma$-ordered if and only if its pdf satisfies the inequality $f(Q(\gamma\epsilon))\geq f(Q(1-\epsilon))$ for all $0\le \epsilon\le\frac{1}{1+\gamma}$ or $f(Q(\gamma\epsilon))\leq f(Q(1-\epsilon))$ for all $0\le \epsilon\le\frac{1}{1+\gamma}$.\end{theorem}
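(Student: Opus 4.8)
The plan is to recognize that, because $\text{QA}(\epsilon,\gamma)=\tfrac{1}{2}\bigl(Q(\gamma\epsilon)+Q(1-\epsilon)\bigr)$, being $\gamma$-ordered is exactly the statement that the map $\epsilon\mapsto\text{QA}(\epsilon,\gamma)$ is monotone on $[0,\tfrac{1}{1+\gamma}]$: non-increasing in the right-skewed convention of the definition, and non-decreasing in the mirror situation. These two monotonicity directions are what produce the ``or'' in the conclusion, so it suffices to show that each direction of monotonicity is equivalent to the corresponding one of the two density inequalities. First I would reduce monotonicity to a pointwise condition and then convert that condition into a statement about the density via the inverse-function identity $Q'(p)=1/f(Q(p))$.

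To make the reduction robust I would work with the increments rather than with the derivative directly. For $\epsilon_1<\epsilon_2$ write $\text{QA}(\epsilon_1,\gamma)-\text{QA}(\epsilon_2,\gamma)=\tfrac{1}{2}\bigl[\bigl(Q(1-\epsilon_1)-Q(1-\epsilon_2)\bigr)-\bigl(Q(\gamma\epsilon_2)-Q(\gamma\epsilon_1)\bigr)\bigr]$, express each quantile increment as an integral of $Q'=1/(f\circ Q)$, and change variables so that both integrals run over $[\epsilon_1,\epsilon_2]$. This turns the sign of $\text{QA}(\epsilon_1,\gamma)-\text{QA}(\epsilon_2,\gamma)$ into the sign of $\int_{\epsilon_1}^{\epsilon_2}\bigl(1/f(Q(1-u))-\gamma/f(Q(\gamma u))\bigr)\,du$. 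Since an interval integral of a fixed integrand keeps one sign over every subinterval if and only if the integrand keeps that sign almost everywhere, global monotonicity becomes equivalent to the pointwise comparison of $f(Q(\gamma\epsilon))$ and $f(Q(1-\epsilon))$ (up to the positive factor $\gamma$ supplied by the chain rule) holding throughout the interval. Matching the two admissible signs to the two monotonicity directions then yields both implications of the ``if and only if'' at once.

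The main obstacle is regularity, because the inverse-function identity and the integral representation can fail exactly where they matter. Where the density along the quantile curve vanishes, $F$ is flat, $Q$ jumps, and $Q'$ is not finite, so I would treat those levels as contributing $+\infty$ to the relevant integrand and verify that the density inequalities still read correctly there; where $f$ is merely discontinuous or unbounded I would avoid differentiating $\text{QA}$ altogether and rely on the increment/integral form, which needs only that $Q$ is non-decreasing and that the change of variables is justified for a density-carrying (absolutely continuous) $F$. I would also confirm that the almost-everywhere integrand condition genuinely upgrades to the stated everywhere density inequality, using continuity of $f\circ Q$ on the support and handling the endpoints $\epsilon=0$ and $\epsilon=\tfrac{1}{1+\gamma}$ (where the two quantile arguments coincide) as limiting cases. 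Establishing this equivalence cleanly in the presence of support gaps and nonsmooth densities, rather than only in the easy smooth-and-positive-density case, is where the real work lies.
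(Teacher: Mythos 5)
Your overall route is the same as the paper's: read $\gamma$-orderliness as monotonicity of $\epsilon\mapsto\text{QA}(\epsilon,\gamma)$, reduce that to a pointwise sign condition, and convert quantile slopes into densities via $Q'(p)=1/f(Q(p))$. Your increment/integral formulation is in fact more careful than the paper's infinitesimal argument, and your attention to flat spots of $F$ and support gaps goes beyond anything the paper does. But your own bookkeeping exposes the step that fails: after the change of variables your integrand is $1/f(Q(1-u))-\gamma/f(Q(\gamma u))$, so the pointwise condition equivalent to monotonicity is $f(Q(\gamma\epsilon))\geq\gamma f(Q(1-\epsilon))$, not the stated $f(Q(\gamma\epsilon))\geq f(Q(1-\epsilon))$. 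Your parenthetical ``up to the positive factor $\gamma$ supplied by the chain rule'' does not close this gap: for $\gamma\neq1$ the two inequalities are genuinely inequivalent. For instance, with $\gamma<1$, a density profile satisfying $\gamma f(Q(1-\epsilon))\leq f(Q(\gamma\epsilon))<f(Q(1-\epsilon))$ on part of the range gives a $\gamma$-ordered distribution that satisfies neither of the theorem's two stated alternatives, so the ``only if'' direction, as stated, is not what your (correct) computation delivers. What your argument actually proves is the equivalence of $\gamma$-orderliness with the $\gamma$-weighted density inequality $f(Q(\gamma\epsilon))\geq\gamma f(Q(1-\epsilon))$ (or its reverse), together with the regularity caveats you identify.

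It is worth noting that the paper's own proof commits exactly the slip you avoided: it perturbs the quantile average by writing $\frac{Q(\gamma\epsilon-\delta)+Q(1-\epsilon+\delta)}{2}$, shifting both arguments of $Q$ by the same $\delta$, whereas decreasing $\epsilon$ by $\delta$ shifts the left argument by $\gamma\delta$; this is precisely how the factor $\gamma$ disappears and the paper lands on $Q'(1-\epsilon)\geq Q'(\gamma\epsilon)$ and the factor-free inequality of the statement. So your calculus is the faithful one, and it coincides with the theorem exactly when $\gamma=1$, the case the paper most often uses. As a blind proof of the literal statement, however, your attempt has a genuine gap at the final identification step: you must either restrict to $\gamma=1$ or carry the factor $\gamma$ into the density inequality, and the statement itself would need amending in the latter case.
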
\begin{proof}Without loss of generality, consider the case of right-skewed distribution. From the above definition of $\gamma$-orderliness, it is deduced that $\frac{Q(\gamma\epsilon-\delta)+Q(1-\epsilon+\delta)}{2}\geq\frac{Q(\gamma\epsilon)+Q(1-\epsilon)}{2}  \Leftrightarrow Q(\gamma\epsilon-\delta)-Q(\gamma\epsilon)\geq Q(1-\epsilon)-Q(1-\epsilon+\delta)  \Leftrightarrow Q'(1-\epsilon)\geq Q'(\gamma\epsilon)$, where $\delta$ is an infinitesimal positive quantity. Observing that the quantile function is the inverse function of the cumulative distribution function (cdf), $Q'(1-\epsilon)\geq Q'(\gamma\epsilon) \Leftrightarrow F'(Q(\gamma\epsilon))\geq F'(Q(1-\epsilon))$, thereby completing the proof, since the derivative of cdf is pdf.\end{proof} According to Theorem \ref{q1}, if a probability distribution is right-skewed and monotonic decreasing, it will always be $\gamma$-ordered. For a right-skewed unimodal distribution, if $Q(\gamma\epsilon)>M$, then the inequality $f(Q(\gamma\epsilon))\geq f(Q(1-\epsilon))$ holds. The principle is extendable to unimodal-like distributions. Suppose there is a right-skewed unimodal-like distribution with the first mode, denoted as $M_1$, having the greatest probability density, while there are several smaller modes located towards the higher values of the distribution. Furthermore, assume that this distribution follows the mean-$\gamma$-median-first mode inequality, and the $\gamma$-median, $Q(\frac{\gamma}{1+\gamma})$, falling within the first dominant mode (i.e., if $x>Q(\frac{\gamma}{1+\gamma})$, $f(Q(\frac{\gamma}{1+\gamma}))\geq f(x)$). Then, if $Q(\gamma\epsilon)>M_1$, the inequality $f(Q(\gamma\epsilon))\geq f(Q(1-\epsilon))$ also holds. In other words, even though a distribution following the mean-$\gamma$-median-mode inequality may not be strictly $\gamma$-ordered, the inequality defining the $\gamma$-orderliness remains valid for most quantile averages. The mean-$\gamma$-median-mode inequality can also indicate possible bounds for $\gamma$ in practice, e.g., for any distributions, when $\gamma\rightarrow\infty$, the $\gamma$-median will be greater than the mean and the mode, when $\gamma\rightarrow0$, the $\gamma$-median will be smaller than the mean and the mode, a reasonable $\gamma$ should maintain the validity of the mean-$\gamma$-median-mode inequality.

The definition above of $\gamma$-orderliness for a right-skewed distribution implies a monotonic decreasing behavior of the quantile average function with respect to the breakdown point. Therefore, consider the sign of the partial derivative, it can also be expressed as: \begin{align*}\forall 0\leq\epsilon\leq\frac{1}{1+\gamma}, \frac{\partial\text{QA}}{\partial\epsilon}\leq0.\end{align*}The left-skewed case can be obtained by reversing the inequality $\frac{\partial\text{QA}}{\partial\epsilon}\leq0$ to $\frac{\partial\text{QA}}{\partial\epsilon}\geq0$ and employing the second definition of QA, as given in [\ref{eq:2}]. For simplicity, the left-skewed case will be omitted in the following discussion. If $\gamma=1$, the $\gamma$-ordered distribution is referred to as ordered distribution. 

Furthermore, many common right-skewed distributions, such as the Weibull, gamma, lognormal, and Pareto distributions, are partially bounded, indicating a convex behavior of the QA function with respect to $\epsilon$ as $\epsilon$ approaches 0. By further assuming convexity, the second $\gamma$-orderliness can be defined for a right-skewed distribution as follows, \begin{align*}\forall 0\leq\epsilon\leq\frac{1}{1+\gamma}, \frac{\partial^{2}\text{QA}}{\partial\epsilon^{2}}\geq0  \wedge \frac{\partial\text{QA}}{\partial\epsilon}\leq0\text{.}\end{align*}Analogously, the $\nu$th $\gamma$-orderliness of a right-skewed distribution can be defined as $(-1)^{\nu}\frac{\partial^{\nu}\text{QA}}{\partial\epsilon^{\nu}}\geq0\wedge\ldots\wedge -\frac{\partial\text{QA}}{\partial\epsilon}\geq0$. If $\gamma=1$, the $\nu$th $\gamma$-orderliness is referred as to $\nu$th orderliness. Let $\mathcal{P}_{O}$ denote the set of all distributions that are ordered and $\mathcal{P}_{O_\nu}$ and $\mathcal{P}_{\gamma O_\nu}$ represent the sets of all distributions that are $\nu$th ordered and $\nu$th $\gamma$-ordered, respectively. When the shape parameter of the Weibull distribution, $\alpha$, is smaller than $\frac{1}{1-\ln (2)}$, it can be shown that the Weibull distribution belongs to $\mathcal{P}_{U}\cap\mathcal{P}_{O}\cap\mathcal{P}_{O_2}$ (SI Text). At $\alpha\approx3.602$, the Weibull distribution is symmetric, and as $\alpha\rightarrow\infty$, the skewness of the Weibull distribution approaches 1. Therefore, the parameters that prevent it from being included in the set correspond to cases when it is near-symmetric, as shown in the SI Text. Nevertheless, computing the derivatives of the QA function is often intricate and, at times, challenging. The following theorems establish the relationship between $\mathcal{P}_{O}$, $\mathcal{P}_{O_\nu}$, and $\mathcal{P}_{\gamma O_\nu}$, and a wide range of other semi-parametric distributions. They can be used to quickly identify some parametric distributions in $\mathcal{P}_{O}$, $\mathcal{P}_{O_\nu}$, and $\mathcal{P}_{\gamma O_\nu}$.

\begin{theorem}\label{lst}For any random variable $X$ whose probability distribution function belongs to a location-scale family, the distribution is $\nu$th $\gamma$-ordered if and only if the family of probability distributions is $\nu$th $\gamma$-ordered.\end{theorem}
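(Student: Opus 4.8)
The plan is to exploit the affine structure of location-scale families at the level of quantile functions and to transport it through the quantile-average map. First I would recall the defining property: a location-scale family generated by a base distribution with quantile function $Q$ consists of the laws of $Y=a+bX$ with location $a\in\mathbb{R}$ and strictly positive scale $b>0$. Because the map $x\mapsto a+bx$ is increasing, quantiles are equivariant under it, giving $Q_Y(p)=a+bQ(p)$; this follows immediately from $F_Y(y)=F_X((y-a)/b)$ and the definition of the quantile function as a generalized inverse.

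Next I would substitute this relation into the first definition of the quantile average, [\ref{eq:1}]. One obtains
\begin{align*}
\text{QA}_Y(\epsilon,\gamma)=\tfrac{1}{2}\bigl(Q_Y(\gamma\epsilon)+Q_Y(1-\epsilon)\bigr)=a+b\cdot\text{QA}(\epsilon,\gamma),
\end{align*}
so the quantile-average function of any family member is an increasing affine image of that of the base distribution, sharing the same orientation since $b>0$.

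The key step is then differentiation in $\epsilon$. For every order $j\geq1$ the additive constant $a$ is annihilated, leaving $\frac{\partial^{j}\text{QA}_Y}{\partial\epsilon^{j}}=b\,\frac{\partial^{j}\text{QA}}{\partial\epsilon^{j}}$. Since $b>0$, multiplication by $b$ preserves sign, so each defining inequality $(-1)^{j}\frac{\partial^{j}\text{QA}_Y}{\partial\epsilon^{j}}\geq0$ holds if and only if $(-1)^{j}\frac{\partial^{j}\text{QA}}{\partial\epsilon^{j}}\geq0$. Crucially, the conditions defining $\nu$th $\gamma$-orderliness involve only derivatives of order $1$ through $\nu$, never $\text{QA}$ itself, so the location term $a$ never enters. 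Running $j$ from $1$ to $\nu$ shows the full conjunction is satisfied for $Y$ exactly when it is satisfied for $X$. As $a$ and $b>0$ were arbitrary, the property is constant across the family, yielding both implications: the trivial direction (the family being $\nu$th $\gamma$-ordered forces each member, in particular $X$, to be) and the substantive one ($X$ being $\nu$th $\gamma$-ordered forces every member, hence the family, to be).

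I expect the only real subtlety to be bookkeeping about the convention defining the family rather than any analytic difficulty: one must fix that the scale parameter is strictly positive, so that the quantile map stays increasing and the skewness direction is preserved, for otherwise a sign flip would interchange the right- and left-skewed cases. One must also assume the smoothness implicit in the definition of $\nu$th orderliness so that the derivatives through order $\nu$ exist. The left-skewed case, handled via the second definition [\ref{eq:2}], is identical after the sign reversal already noted in the text, so no separate argument is required.
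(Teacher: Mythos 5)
Your proposal is correct and follows essentially the same route as the paper's own proof: write the quantile function of any family member as $\lambda Q_0(p)+\mu$, observe that the location term is annihilated and the positive scale preserves the signs of all derivatives of the QA function through order $\nu$, and invoke reversibility of the transformation for the converse. Your version merely spells out the affine equivariance $\text{QA}_Y=a+b\,\text{QA}$ and the positivity caveat on the scale parameter explicitly, which the paper leaves implicit.
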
\begin{proof}Let $Q_0$ denote the quantile function of the standard distribution without any shifts or scaling. After a location-scale transformation, the quantile function becomes $Q(p)=\lambda Q_0(p)+\mu$, where $\lambda$ is the scale parameter and $\mu$ is the location parameter. According to the definition of the $\nu$th $\gamma$-orderliness, the signs of derivatives of the QA function are invariant after this transformation. As the location-scale transformation is reversible, the proof is complete.\end{proof}
Theorem \ref{lst} demonstrates that in the analytical proof of the $\nu$th $\gamma$-orderliness of a parametric distribution, both the location and scale parameters can be regarded as constants. It is also instrumental in proving other theorems. 

\begin{theorem}\label{gsdqi} Define a $\gamma$-symmetric distribution as one for which the quantile function satisfies $Q(\gamma \epsilon)=2Q(\frac{\gamma}{1+\gamma})-Q(1-\epsilon)$ for all $0\leq\epsilon\leq\frac{1}{1+\gamma}$. Any $\gamma$-symmetric distribution is $\nu$th $\gamma$-ordered. 
\end{theorem}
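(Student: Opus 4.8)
The plan is to observe that the $\gamma$-symmetry condition forces the quantile average function to be constant in $\epsilon$, after which every orderliness inequality is satisfied trivially, as an equality. No stochastic ordering or derivative computation is really needed; the content is entirely in this collapse.

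First I would rewrite the defining relation in additive form. Setting $c\coloneqq Q(\frac{\gamma}{1+\gamma})$, the $\gamma$-symmetry identity $Q(\gamma\epsilon)=2c-Q(1-\epsilon)$ is equivalent to $Q(\gamma\epsilon)+Q(1-\epsilon)=2c$ for all $0\le\epsilon\le\frac{1}{1+\gamma}$. The constant $c$ is exactly the $\gamma$-median, since at $\epsilon=\frac{1}{1+\gamma}$ both arguments $\gamma\epsilon$ and $1-\epsilon$ collapse to $\frac{\gamma}{1+\gamma}$, so the identity is automatically consistent at that endpoint.

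Next I would substitute this into the first definition of the quantile average, $\text{QA}(\epsilon,\gamma)=\frac{1}{2}(Q(\gamma\epsilon)+Q(1-\epsilon))$. The bracketed sum equals $2c$ identically in $\epsilon$, so $\text{QA}(\epsilon,\gamma)=c$ for every admissible $\epsilon$; that is, QA is a constant function of $\epsilon$ on $[0,\frac{1}{1+\gamma}]$. Finally, since QA is constant, every partial derivative $\frac{\partial^{k}\text{QA}}{\partial\epsilon^{k}}$ vanishes for $k\ge1$, whence $(-1)^{k}\frac{\partial^{k}\text{QA}}{\partial\epsilon^{k}}=0\ge0$ for all $1\le k\le\nu$. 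This is precisely the conjunction defining $\nu$th $\gamma$-orderliness, and because it holds for arbitrary $\nu$ the distribution is $\nu$th $\gamma$-ordered for every $\nu$ (and the same vanishing also covers the reversed inequalities of the left-skewed convention).

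The argument presents no genuine obstacle; the only point requiring a moment of care is confirming that the $\gamma$-symmetry identity is assumed to hold across the \emph{entire} interval $0\le\epsilon\le\frac{1}{1+\gamma}$, so that the constancy of QA---and hence the vanishing of all its derivatives---is global rather than merely pointwise. One might also remark that this makes $\gamma$-symmetric distributions the boundary case of $\gamma$-orderliness, sitting on the equality side of every defining inequality simultaneously.
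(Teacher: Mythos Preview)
Your proposal is correct and follows essentially the same approach as the paper: both arguments show that $\gamma$-symmetry forces $\text{QA}(\epsilon,\gamma)$ to be constant in $\epsilon$, so all derivatives vanish and every orderliness inequality holds with equality. Your version is marginally more direct---you read off $Q(\gamma\epsilon)+Q(1-\epsilon)=2c$ immediately and hence $\text{QA}\equiv c$, whereas the paper first differentiates the identity to obtain $\gamma Q'(\gamma\epsilon)=Q'(1-\epsilon)$ and then computes $\frac{\partial\text{QA}}{\partial\epsilon}=0$---but the substance is identical.
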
 \begin{proof} The equality, $Q(\gamma \epsilon)=2Q(\frac{\gamma}{1+\gamma})-Q(1-\epsilon)$, implies that $\frac{\partial Q(\gamma \epsilon)}{\partial\epsilon}=\gamma Q'(\gamma\epsilon)=\frac{\partial(-Q(1-\epsilon))}{\partial\epsilon}=Q'(1-\epsilon)$. From the first definition of QA, the QA function of the $\gamma$-symmetric distribution is a horizontal line, since $\frac{\partial\text{QA}}{\partial\epsilon}=\gamma Q'(\gamma\epsilon)-Q'(1-\epsilon)=0$. So, the $\nu$th order derivative of QA is always zero. 
\end{proof}
\begin{theorem}\label{sdqi}A symmetric distribution is a special case of the $\gamma$-symmetric distribution when $\gamma=1$, provided that the cdf is monotonic. \end{theorem}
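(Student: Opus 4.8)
The plan is to substitute $\gamma=1$ directly into the defining relation of a $\gamma$-symmetric distribution from Theorem \ref{gsdqi} and verify that it collapses to the classical characterization of symmetry about the median. Setting $\gamma=1$ gives $\frac{\gamma}{1+\gamma}=\frac{1}{2}$ and $\frac{1}{1+\gamma}=\frac{1}{2}$, so the defining equation $Q(\gamma\epsilon)=2Q(\frac{\gamma}{1+\gamma})-Q(1-\epsilon)$ becomes
\begin{align*}Q(\epsilon)=2Q(\tfrac{1}{2})-Q(1-\epsilon)\quad\Longleftrightarrow\quad Q(\epsilon)+Q(1-\epsilon)=2Q(\tfrac{1}{2})\end{align*}
for all $0\leq\epsilon\leq\frac{1}{2}$. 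Writing $m=Q(\frac{1}{2})$ for the median, this asserts that each pair of reflected quantiles averages to $m$, which is precisely the quantile-function form of symmetry about $m$. I would also note that restricting to $\epsilon\in[0,\frac12]$ loses nothing, since relabeling $\epsilon\mapsto1-\epsilon$ recovers the same identity on the full interval $[0,1]$.

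Next I would translate this quantile identity into the cumulative distribution function. Since the cdf $F$ is assumed monotonic, the quantile function $Q$ is a genuine two-sided inverse, so that $F(Q(p))=p$. Setting $x=Q(\epsilon)$, the displayed identity yields $Q(1-\epsilon)=2m-x$, and applying $F$ gives $F(2m-x)=1-\epsilon=1-F(x)$; writing $x=m-t$ this is exactly $F(m-t)+F(m+t)=1$ for all $t$, the standard definition of a distribution symmetric about its median. Conversely, any distribution symmetric about $m$ satisfies $Q(\epsilon)+Q(1-\epsilon)=2m$ and hence meets the $\gamma=1$ instance of the condition, so the symmetric distributions are exactly the $1$-symmetric ones, establishing the claim that symmetry is the special case $\gamma=1$.

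The only delicate point, and the one I expect to require the most care, is the role of the monotonicity hypothesis. Monotonicity of $F$ is what guarantees that $Q$ is a true inverse rather than a generalized inverse with flat segments; without it the passage between the quantile identity and the cdf identity $F(m-t)+F(m+t)=1$ could break down on the intervals where $F$ is constant, equivalently where $Q$ has jumps. I would therefore state explicitly that under monotonicity the map $\epsilon\mapsto Q(\epsilon)$ is a bijection onto the support, so that each step above is reversible and the two characterizations are equivalent. The algebraic substitution of $\gamma=1$ is immediate; the substance of the proof lies entirely in justifying this inverse relationship, which is exactly why the theorem is stated with the proviso that the cdf be monotonic.
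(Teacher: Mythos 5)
Your proposal is correct and takes essentially the same route as the paper's proof: both translate between the $\gamma=1$ quantile identity $Q(\epsilon)+Q(1-\epsilon)=2Q(\tfrac{1}{2})$ and the cdf reflection relation $F(2m-x)=1-F(x)$ by substituting $x=Q(p)$ and using monotonicity of $F$ to make $Q$ a genuine inverse so that equal $F$-values force equal arguments. The only difference is direction of travel --- the paper argues from the symmetry definition $f(x)=f(2m-x)$ to the quantile identity, while you run the same reversible chain the other way and note the converse, giving the (slightly stronger) equivalence of the two characterizations.
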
 \begin{proof} A symmetric distribution is a probability distribution such that for all $x$, $f(x)=f(2m- x)$. Its cdf satisfies $F(x) = 1 - F(2m- x)$. Let $x=Q(p)$, then, $F(Q(p))=p= 1 - F(2m- Q(p))$ and $F(Q(1-p)) = 1 - p\Leftrightarrow p=1-F(Q(1-p))$. Therefore, $F(2m- Q(p))=F(Q(1-p))$. Since the cdf is monotonic, $2m- Q(p)=Q(1-p)\Leftrightarrow Q(p)=2m-Q(1-p)$. Choosing $p=\epsilon$ yields the desired result. \end{proof} Since the generalized Gaussian distribution is symmetric around the median, it is $\nu$th ordered, as a consequence of Theorem \ref{gsdqi}. Also, the integral of all quantile averages is not equal to the mean, unless $\gamma=1$, as the left and right parts have different weights. The symmetric distribution has a unique role in that its all quantile averages are equal to the mean for a distribution with a finite mean.

\begin{theorem}\label{kqin}Any right-skewed distribution whose quantile function $Q$ satisfies $Q^{(\nu)}\left(p\right)\geq0\wedge\ldots Q^{(i)}\left(p\right)\geq0\ldots\wedge Q^{(2)}\left(p\right)\geq0$, $i\text{ mod }2=0$, is $\nu$th $\gamma$-ordered, provided that $0\leq\gamma\leq1$.\end{theorem}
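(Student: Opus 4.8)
The plan is to differentiate the quantile representation of the QA function directly. Writing $\text{QA}(\epsilon)=\tfrac12(Q(\gamma\epsilon)+Q(1-\epsilon))$ and applying the chain rule $j$ times gives
\[
\frac{\partial^{j}\text{QA}}{\partial\epsilon^{j}}=\tfrac12\left(\gamma^{j}Q^{(j)}(\gamma\epsilon)+(-1)^{j}Q^{(j)}(1-\epsilon)\right),
\]
so that the $\nu$th $\gamma$-orderliness is equivalent to $S_{j}(\epsilon):=(-1)^{j}\frac{\partial^{j}\text{QA}}{\partial\epsilon^{j}}=\tfrac12((-\gamma)^{j}Q^{(j)}(\gamma\epsilon)+Q^{(j)}(1-\epsilon))\ge0$ for every $1\le j\le\nu$ and every $\epsilon\in[0,\tfrac{1}{1+\gamma}]$. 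First I would dispose of the even orders: for even $j$ one has $(-\gamma)^{j}=\gamma^{j}\ge0$, and since the hypothesis supplies $Q^{(j)}\ge0$ for even $j$ up to $\nu$ (in particular the top order $\nu$ is even), both summands are non-negative and $S_{j}\ge0$ follows at once.

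The odd orders I would reach through the recursion obtained by differentiating $S_{j}$ once more, which a short computation reduces to $S_{j}'=-S_{j+1}$. Combined with the endpoint collapse at $\epsilon^{*}=\tfrac{1}{1+\gamma}$, where $\gamma\epsilon^{*}=1-\epsilon^{*}=\tfrac{\gamma}{1+\gamma}$ and hence $S_{j}(\epsilon^{*})=\tfrac12(1+(-\gamma)^{j})Q^{(j)}(\tfrac{\gamma}{1+\gamma})$, this gives the integral representation
\[
S_{j}(\epsilon)=S_{j}(\epsilon^{*})+\int_{\epsilon}^{\epsilon^{*}}S_{j+1}(t)\,dt.
\]
I would then run a reverse induction on $j$ from $\nu$ down to $1$: the top order $S_{\nu}\ge0$ is the even-order base case above, and once $S_{j+1}\ge0$ is known the integral term is non-negative, so $S_{j}\ge0$ reduces to the single endpoint inequality $S_{j}(\epsilon^{*})\ge0$.

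For $j=1$ this endpoint inequality reads $\tfrac12(1-\gamma)Q'(\tfrac{\gamma}{1+\gamma})\ge0$, which holds because $\gamma\le1$ and $Q'\ge0$ for any quantile function; this recovers ordinary $\gamma$-orderliness and is consistent with Theorem \ref{q1}. The main obstacle is the intermediate odd orders $3\le j\le\nu-1$, where $S_{j}(\epsilon^{*})=\tfrac12(1-\gamma^{j})Q^{(j)}(\tfrac{\gamma}{1+\gamma})$ so that everything hinges on the sign of $Q^{(j)}$ at the $\gamma$-median. Since the hypothesis pins down only the even-order derivatives, the resolution I would pursue is to use that $Q^{(j+1)}\ge0$ makes $Q^{(j)}$ non-decreasing and then combine this monotonicity with $0\le\gamma\le1$ and the ordering $\gamma\epsilon\le1-\epsilon$ to force $\gamma^{j}Q^{(j)}(\gamma\epsilon)\le Q^{(j)}(1-\epsilon)$; the delicate point is exactly the region where $Q^{(j)}$ could turn negative, and this is where the right-skewness assumption and the full chain of derivative signs must be brought to bear.
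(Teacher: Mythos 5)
Your setup reproduces the paper's computation exactly: the identity $(-1)^{j}\frac{\partial^{j}\text{QA}}{\partial\epsilon^{j}}=\tfrac12\left((-\gamma)^{j}Q^{(j)}(\gamma\epsilon)+Q^{(j)}(1-\epsilon)\right)$ is the paper's displayed formula, and your even-order case (both summands non-negative since $(-\gamma)^{j}=\gamma^{j}\geq 0$) is verbatim the paper's argument, which in fact holds for all $\gamma\geq 0$. Your recursion $S_j'=-S_{j+1}$, the endpoint identity $S_j(\epsilon^{*})=\tfrac12(1+(-\gamma)^{j})Q^{(j)}(\tfrac{\gamma}{1+\gamma})$, and the reverse induction via the integral representation are all correct, and the reduction of each order to a single endpoint sign condition is a genuine observation not in the paper. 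But the proposal is incomplete where it matters: the odd orders $3\leq j\leq\nu-1$ are left as an unproved ``delicate point,'' so this is a gap, not a proof.

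Moreover, the gap is unbridgeable under your reading of the hypothesis (even-order derivatives only). Take $Q(p)=bp+\tfrac{a}{2}p^{2}-\tfrac{c}{6}p^{3}$ with $a\geq c>0$ and $b$ large enough that $Q'\geq 0$: then $Q^{(2)}(p)=a-cp\geq 0$ and $Q^{(4)}\equiv 0$, so every even-order hypothesis up to $\nu=4$ holds, yet $Q^{(3)}\equiv -c$ gives $S_3(\epsilon)=\tfrac12\left(Q^{(3)}(1-\epsilon)-\gamma^{3}Q^{(3)}(\gamma\epsilon)\right)=-\tfrac{c}{2}(1-\gamma^{3})<0$ for every $\gamma<1$ and every $\epsilon$. (Equivalently, at $\gamma=0$ one has $S_j(\epsilon)=\tfrac12 Q^{(j)}(1-\epsilon)$, so $\nu$th $\gamma$-orderliness literally forces $Q^{(j)}\geq 0$ for all $j\leq\nu$, odd orders included.) The paper's proof avoids this because it reads the hypothesis chain $Q^{(\nu)}\geq 0\wedge\ldots\wedge Q^{(2)}\geq 0$ as covering all intermediate orders, and then disposes of odd $i$ in one line: $Q^{(i+1)}\geq 0$ makes $Q^{(i)}$ non-decreasing, and since $Q^{(i)}(\gamma\epsilon)\geq 0$ (automatic for $i=1$ because $Q'\geq 0$, hypothesis for odd $i\geq 3$), one gets $\gamma^{i}Q^{(i)}(\gamma\epsilon)\leq Q^{(i)}(\gamma\epsilon)\leq Q^{(i)}(1-\epsilon)$ pointwise, with no integral machinery needed. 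Under that reading your own suggested resolution (monotonicity plus $0\leq\gamma\leq 1$) closes instantly, and your endpoint condition $Q^{(j)}(\tfrac{\gamma}{1+\gamma})\geq 0$ is satisfied by hypothesis; under your even-only reading no argument can succeed, since the statement itself then fails. The missing idea, in short, is the non-negativity of the odd-order derivatives themselves, not any further exploitation of monotonicity or right-skewness.
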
\begin{proof}Since $(-1)^{i}\frac{\partial^{i}\text{QA}}{\partial\epsilon^{i}}=\frac{1}{2}((-\gamma)^{i}Q^{i}(\gamma\epsilon)+Q^{i}(1-\epsilon))$ and $1\leq i \leq\nu$, when $i\text{ mod }2=0$, $(-1)^{i}\frac{\partial^{i}\text{QA}}{\partial\epsilon^{i}}\geq0$ for all $\gamma\geq0$. When $i\text{ mod }2=1$, if further assuming $0\leq\gamma\leq1$, $(-1)^{i}\frac{\partial^{i}\text{QA}}{\partial\epsilon^{i}}\geq0$, since $Q^{(i+1)}\left(p\right)\geq0$.\end{proof}This result makes it straightforward to show that the Pareto distribution follows the $\nu$th $\gamma$-orderliness, provided that $0\leq\gamma\leq1$, since the quantile function of the Pareto distribution is $Q_{Par}\left(p\right)=x_m(1-p)^{-\frac{1}{\alpha}}$, where $x_m>0$, $\alpha>0$, and so $Q_{Par}^{(\nu)}\left(p\right)\geq0$ for all $\nu\in\mathbb{N}$ according to the chain rule.

\begin{theorem}\label{mpdfti}A right-skewed distribution with a monotonic decreasing pdf is second $\gamma$-ordered.\end{theorem}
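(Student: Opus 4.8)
The plan is to verify the two-part definition of second $\gamma$-orderliness directly, namely $\frac{\partial\text{QA}}{\partial\epsilon}\leq 0$ (ordinary $\gamma$-orderliness) together with $\frac{\partial^{2}\text{QA}}{\partial\epsilon^{2}}\geq 0$ (convexity). The first half is essentially already in hand: a right-skewed distribution with a monotonic decreasing pdf is $\gamma$-ordered by the remark following Theorem \ref{q1}. Indeed, $Q$ is increasing and $\gamma\epsilon\leq 1-\epsilon$ on $0\leq\epsilon\leq\frac{1}{1+\gamma}$, so $Q(\gamma\epsilon)\leq Q(1-\epsilon)$, whence a decreasing density gives $f(Q(\gamma\epsilon))\geq f(Q(1-\epsilon))$, which is exactly one of the two alternatives in the criterion of Theorem \ref{q1}. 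Thus the genuinely new content is the convexity half.

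For convexity I would translate the derivatives of $\text{QA}$ into derivatives of the density. Differentiating the identity $F(Q(p))=p$ once gives $Q'(p)=1/f(Q(p))$, and differentiating a second time yields $Q''(p)=-f'(Q(p))/f(Q(p))^{3}$. From the first definition $\text{QA}(\epsilon,\gamma)=\frac{1}{2}(Q(\gamma\epsilon)+Q(1-\epsilon))$ and the chain rule, $\frac{\partial^{2}\text{QA}}{\partial\epsilon^{2}}=\frac{1}{2}(\gamma^{2}Q''(\gamma\epsilon)+Q''(1-\epsilon))$, which matches the $i=2$ instance of the expression used in the proof of Theorem \ref{kqin}.

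It then remains to read off the sign. Since the density is positive on its support, the denominator $f(Q(p))^{3}$ is positive, and since the pdf is monotonic decreasing we have $f'\leq 0$ everywhere, so $Q''(p)\geq 0$ for every $p$. Both summands $\gamma^{2}Q''(\gamma\epsilon)$ and $Q''(1-\epsilon)$ are therefore nonnegative, giving $\frac{\partial^{2}\text{QA}}{\partial\epsilon^{2}}\geq 0$. Combined with the $\gamma$-orderliness established above, this is precisely the definition of second $\gamma$-orderliness.

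The step I expect to require the most care is the second-derivative identity for $Q$ and the attendant sign bookkeeping: one must track the factor $\gamma^{2}$ produced by the chain rule and confirm that the cube $f(Q(p))^{3}$ (not a square) appears in the denominator, so that the sign of $Q''$ is governed solely by $-f'$. Everything else is direct substitution. A minor point worth stating explicitly is that the convexity half is insensitive to the size of $\gamma$, since $\gamma^{2}\geq 0$ whether or not $\gamma$ exceeds $1$, so no restriction beyond $\gamma\geq 0$ is needed.
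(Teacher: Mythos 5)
Your proposal is correct and follows essentially the same route as the paper: invoke Theorem \ref{q1} for the $\gamma$-orderliness half, show $Q^{(2)}(p)\geq 0$ from the decreasing density, and conclude via the $i=2$ computation in the proof of Theorem \ref{kqin}. Differentiating $F(Q(p))=p$ rather than $x=Q(F(x))$ is the same implicit-function identity in inverse form, yielding the identical expression $Q''(p)=-f'(Q(p))/f(Q(p))^{3}$, and your observation that the convexity half needs only $\gamma\geq 0$ matches the paper's remark that $(-1)^{2}\frac{\partial^{2}\text{QA}}{\partial\epsilon^{2}}\geq0$ holds for all $\gamma\geq0$.
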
\begin{proof}Given that a monotonic decreasing pdf implies $f'(x)=F^{\left(2\right)}\left(x\right)\le0$, let $x=Q\left(F\left(x\right)\right)$, then by differentiating both sides of the equation twice, one can obtain  $0=Q^{\left(2\right)}\left(F\left(x\right)\right)\left(F^\prime\left(x\right)\right)^2+Q^\prime\left(F\left(x\right)\right)F^{(2)}\left(x\right)\Rightarrow Q^{\left(2\right)}\left(F\left(x\right)\right)=-\frac{Q^\prime\left(F\left(x\right)\right)F^{\left(2\right)}\left(x\right)}{\left(F^\prime\left(x\right)\right)^2}\geq0$, since $Q^\prime\left(p\right)\geq0$. Theorem \ref{q1} already established the $\gamma$-orderliness for all $\gamma\geq0$, which means $\forall 0\leq\epsilon\leq\frac{1}{1+\gamma}, \frac{\partial\text{QA}}{\partial\epsilon}\leq0.$ The desired result is then derived from the proof of Theorem \ref{kqin}, since $(-1)^{2}\frac{\partial^{2}\text{QA}}{\partial\epsilon^{2}}\geq0$ for all $\gamma\geq0$.\end{proof}

Theorem \ref{mpdfti} provides valuable insights into the relation between modality and second $\gamma$-orderliness. The conventional definition states that a distribution with a monotonic pdf is still considered unimodal. However, within its supported interval, the mode number is zero. Theorem \ref{q1} implies that the number of modes and their magnitudes within a distribution are closely related to the likelihood of $\gamma$-orderliness being valid. This is because, for a distribution satisfying the necessary and sufficient condition in Theorem \ref{q1}, it is already implied that the probability density of the left-hand side of the $\gamma$-median is always greater than the corresponding probability density of the right-hand side of the $\gamma$-median. So although counterexamples can always be constructed for non-monotonic distributions, the general shape of a $\gamma$-ordered distribution should have a single dominant mode. It can be easily established that the gamma distribution is second $\gamma$-ordered when $\alpha\leq1$, as the pdf of the gamma distribution is $f\left(x\right)=\frac{\lambda ^{-\alpha} x^{\alpha-1} e^{-\frac{x}{\lambda }}}{\Gamma (\alpha)}$, where $x\geq0$, $\lambda>0$, $\alpha>0$, and $\Gamma$ represents the gamma function. This pdf is a product of two monotonic decreasing functions under constraints. For $\alpha>1$, analytical analysis becomes challenging. Numerical results can varify that orderliness is valid if $\alpha<140$, the second orderliness is valid if $\alpha>81$, and the third orderliness is valid if $\alpha<59$ (SI Text). It is instructive to consider that when $\alpha\rightarrow\infty$, the gamma distribution converges to a Gaussian distribution with mean $\mu=\alpha\lambda$ and variance $\sigma=\alpha\lambda^2$. The skewness of the gamma distribution, $\frac{\alpha+2}{\sqrt{\alpha (\alpha+1)}}$, is monotonic with respect to $\alpha$, since $\frac{\partial\Tilde{\mu}_3(\alpha)}{\partial\alpha}=\frac{-3 \alpha-2}{2 (\alpha (\alpha+1))^{3/2}}<0$. When $\alpha=59$, $\Tilde{\mu}_3(\alpha)=1.025$. Theorefore, similar to the Weibull distribution, the parameters which make these distributions fail to be included in $\mathcal{P}_{U}\cap\mathcal{P}_{O}\cap\mathcal{P}_{O_2}\cap\mathcal{P}_{O_3}$ also correspond to cases when it is near-symmetric. 

\begin{theorem}\label{mtti}Consider a $\gamma$-symmetric random variable $X$. Let it be transformed using a function $\phi\left(x\right)$ such that $\phi^{(2)}\left(x\right)\geq0$ over the interval supported, the resulting convex transformed distribution is $\gamma$-ordered. Moreover, if the quantile function of $X$ satifies $Q^{(2)}\left(p\right)\le0$, the convex transformed distribution is second $\gamma$-ordered.\end{theorem}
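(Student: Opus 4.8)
The plan is to work directly with the QA function of the transformed variable and to exploit the two differential identities that $\gamma$-symmetry imposes on $Q$. Writing $Y=\phi(X)$ with $\phi$ increasing, the quantile function of $Y$ is $\tilde Q(p)=\phi(Q(p))$, so $\text{QA}_Y(\epsilon,\gamma)=\tfrac12\big(\phi(Q(\gamma\epsilon))+\phi(Q(1-\epsilon))\big)$. From the proof of Theorem \ref{gsdqi}, $\gamma$-symmetry is equivalent to the first-order identity $\gamma Q'(\gamma\epsilon)=Q'(1-\epsilon)$; differentiating this once more in $\epsilon$ yields the second-order identity $\gamma^2 Q^{(2)}(\gamma\epsilon)=-Q^{(2)}(1-\epsilon)$, which will be the engine of the whole argument.

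For the first claim I would differentiate once, getting $\frac{\partial\text{QA}_Y}{\partial\epsilon}=\tfrac12\big(\gamma\phi'(Q(\gamma\epsilon))Q'(\gamma\epsilon)-\phi'(Q(1-\epsilon))Q'(1-\epsilon)\big)$, then substitute $\gamma Q'(\gamma\epsilon)=Q'(1-\epsilon)$ to collapse this into $\tfrac12 Q'(1-\epsilon)\big(\phi'(Q(\gamma\epsilon))-\phi'(Q(1-\epsilon))\big)$. On the domain $0\le\epsilon\le\frac{1}{1+\gamma}$ one has $\gamma\epsilon\le 1-\epsilon$, hence $Q(\gamma\epsilon)\le Q(1-\epsilon)$; since $\phi^{(2)}\ge0$ makes $\phi'$ nondecreasing and $Q'\ge0$, the bracket is nonpositive and the whole derivative is $\le0$, i.e. $Y$ is $\gamma$-ordered.

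For the second claim I would differentiate a second time, obtaining the two curvature contributions $\gamma^2\phi^{(2)}(Q(\gamma\epsilon))(Q'(\gamma\epsilon))^2$ and $\phi^{(2)}(Q(1-\epsilon))(Q'(1-\epsilon))^2$ together with the two terms $\gamma^2\phi'(Q(\gamma\epsilon))Q^{(2)}(\gamma\epsilon)$ and $\phi'(Q(1-\epsilon))Q^{(2)}(1-\epsilon)$. The decisive step is to eliminate $Q^{(2)}(1-\epsilon)$ via the second-order identity $Q^{(2)}(1-\epsilon)=-\gamma^2 Q^{(2)}(\gamma\epsilon)$; after this substitution the two $Q^{(2)}$ terms merge into $\tfrac12\gamma^2 Q^{(2)}(\gamma\epsilon)\big(\phi'(Q(\gamma\epsilon))-\phi'(Q(1-\epsilon))\big)$, which is a product of two nonpositive factors (recall $Q^{(2)}\le0$ by hypothesis, and the bracket is nonpositive by the first part), hence nonnegative. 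The two curvature contributions are nonnegative because $\phi^{(2)}\ge0$, so $\frac{\partial^2\text{QA}_Y}{\partial\epsilon^2}\ge0$, which with the first part gives second $\gamma$-orderliness.

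The two differentiations are routine; the real content, and the step I expect to be the main obstacle, is recognizing that the signs of the bare $Q^{(2)}$ terms are individually unhelpful and that one must first invoke the differentiated $\gamma$-symmetry relation to pair them up into a single sign-definite product. I would also want to confirm the implicit hypothesis that $\phi$ is increasing, so that $\tilde Q=\phi\circ Q$ holds and $Q'(1-\epsilon)\ge0$ carries the intended sign; if $\phi$ were convex but decreasing the orientation of the QA function would reverse, so making this monotonicity assumption explicit is the one gap I would close before finalizing the argument.
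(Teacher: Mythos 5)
Your proposal is correct and follows essentially the same route as the paper's own proof: differentiate $\frac{1}{2}\bigl(\phi(Q(\gamma\epsilon))+\phi(Q(1-\epsilon))\bigr)$, collapse the first derivative via the $\gamma$-symmetry identity $\gamma Q'(\gamma\epsilon)=Q'(1-\epsilon)$ into a sign-definite bracket, and then use its differentiated form $\gamma^2 Q^{(2)}(\gamma\epsilon)=-Q^{(2)}(1-\epsilon)$ to merge the two $Q^{(2)}$ terms in the second derivative into a product of nonpositive factors. Your closing remark about needing $\phi$ increasing so that $\phi\circ Q$ is genuinely the transformed quantile function is a fair refinement that the paper itself only acknowledges afterward, when it notes the extra constraint $\phi'(x)\geq0$ in connecting $\phi$ to van Zwet's convex transformations.
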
\begin{proof}Let $\phi\mathrm{QA}(\epsilon,\gamma)=\frac{1}{2}(\phi (Q(\gamma\epsilon))+\phi (Q(1-\epsilon)))$. Then, for all $0\leq\epsilon\leq\frac{1}{1+\gamma}$, $\frac{\partial\phi\mathrm{QA}}{\partial\epsilon}=\frac{1}{2}\left(\gamma\phi^\prime\left(Q\left(\gamma\epsilon\right)\right)Q^\prime\left(\gamma\epsilon\right)-\phi^\prime\left(Q\left(1-\epsilon\right)\right)Q^\prime\left(1-\epsilon\right)\right)=\frac{1}{2} \gamma Q^\prime\left(\gamma\epsilon\right)\left(\phi^\prime\left(Q\left(\gamma\epsilon\right)\right)-\phi^\prime\left(Q\left(1-\epsilon\right)\right)\right)\le0$, since for a $\gamma$-symmetric distribution, $Q(\frac{1}{1+\gamma})-Q\left(\gamma\epsilon\right)=Q\left(1-\epsilon\right)-Q(\frac{1}{1+\gamma})$, differentiating both sides, $-\gamma Q^\prime\left(\gamma\epsilon\right)=-Q^\prime(1-\epsilon)$, where $Q^\prime\left(p\right)\geq0, \phi^{(2)}\left(x\right)\geq0$. If further differentiating the equality, $\gamma^2 Q^{(2)}\left(\gamma\epsilon\right)=-Q^{(2)}(1-\epsilon)$. Since $\frac{\partial^{(2)}\phi\mathrm{QA} }{\partial\epsilon^{(2)}}=\frac{1}{2}\left(\gamma^2 \phi^2\left(Q\left(\gamma\epsilon\right)\right)\left(Q^\prime\left(\gamma\epsilon\right)\right)^2+\phi^2\left(Q\left(1-\epsilon\right)\right)\left(Q^\prime\left(1-\epsilon\right)\right)^2\right)+\frac{1}{2}\left(\gamma^2\phi^\prime\left(Q\left(\gamma\epsilon\right)\right)\left(Q^2\left(\gamma\epsilon\right)\right)+\phi^\prime\left(Q\left(1-\epsilon\right)\right)\left(Q^2\left(1-\epsilon\right)\right)\right)=\frac{1}{2}\left(\left(\phi^{\left(2\right)}\left(Q\left(\gamma\epsilon\right)\right)+\phi^{\left(2\right)}\left(Q\left(1-\epsilon\right)\right)\right)\left(\gamma^2 Q^\prime\left(\gamma\epsilon\right)\right)^2\right)+\frac{1}{2}\left(\left(\phi^\prime\left(Q\left(\gamma\epsilon\right)\right)-\phi^\prime\left(Q\left(1-\epsilon\right)\right)\right)\gamma^2 Q^{(2)}\left(\gamma\epsilon\right)\right)$. If $Q^{(2)}\left(p\right)\leq0$, for all $0\leq\epsilon\leq\frac{1}{1+\gamma}$, $\frac{\partial^{(2)}\phi\mathrm{QA} }{\partial\epsilon^{(2)}}\geq0$.\end{proof}


An application of Theorem \ref{mtti} is that the lognormal distribution is ordered as it is exponentially transformed from the Gaussian distribution. The quantile function of the Gaussian distribution meets the condition $Q_{Gau}^{(2)}\left(p\right)=-2 \sqrt{2} \pi  \sigma e^{2 \text{erfc}^{-1}(2 p)^2} \text{erfc}^{-1}(2 p)\le0$, where $\sigma$ is the standard deviation of the Gaussian distribution and erfc denotes the complementary error function. Thus, the lognormal distribution is second ordered. Numerical results suggest that it is also third ordered, although analytically proving this result is challenging.

Theorem \ref{mtti} also reveals a relation between convex transformation and orderliness, since $\phi$ is the non-decreasing convex function in van Zwet's trailblazing work \emph{Convex transformations of random variables} \cite{van1964convex} if adding an additional constraint that $\phi^\prime\left(x\right)\geq0$. Consider a near-symmetric distribution $S$, such that the $\text{SQA}(\epsilon)$ as a function of $\epsilon$ fluctuates from $0$ to $\frac{1}{2}$. By definition, $S$ is not ordered. Let $s$ be the pdf of $S$. Applying the transformation $\phi\left(x\right)$ to $S$ decreases $s(Q_S(\epsilon))$, and the decrease rate, due to the order, is much smaller for $s(Q_S(1-\epsilon))$. As a consequence, as $\phi^{(2)}\left(x\right)$ increases, eventually, after a point, for all $0\leq\epsilon\leq\frac{1}{1+\gamma}$, $s(Q_S(\epsilon))$ becomes greater than $s(Q_S(1-\epsilon))$ even if it was not previously. Thus, the $\text{SQA}({\epsilon})$ function becomes monotonically decreasing, and $S$ becomes ordered. Accordingly, in a family of distributions that differ by a skewness-increasing transformation in van Zwet's sense, violations of orderliness typically occur only when the distribution is near-symmetric.

Pearson proposed using the 3 times standardized mean-median difference, $\frac{3(\mu-m)}{\sigma}$, as a measure of skewness in 1895 \cite{pearson1895x}. Bowley (1926) proposed a measure of skewness based on the $\text{SQA}_{\epsilon=\frac{1}{4}}$-median difference $\text{SQA}_{\epsilon=\frac{1}{4}}-m$ \cite{bowley1926elements}. Groeneveld and Meeden (1984) \cite{groeneveld1984measuring} generalized these measures of skewness based on van Zwet's convex transformation \cite{van1964convex} while exploring their properties. A distribution is called monotonically right-skewed if and only if $\forall 0\leq\epsilon_1\leq\epsilon_2\leq\frac{1}{2}, \text{SQA}_{\epsilon_1}-m \geq \text{SQA}_{\epsilon_2}-m.$ Since $m$ is a constant, the monotonic skewness is equivalent to the orderliness. For a nonordered distribution, the signs of $\text{SQA}_{\epsilon}-m$ with different breakdown points might be different, implying that some skewness measures indicate left-skewed distribution, while others suggest right-skewed distribution. Although it seems reasonable that such a distribution is likely be generally near-symmetric, counterexamples can be constructed. For example, first consider the Weibull distribution, when $\alpha>\frac{1}{1-\ln (2)}$, it is near-symmetric and nonordered, the non-monotonicity of the SQA function arises when $\epsilon$ is close to $\frac{1}{2}$, but if then replacing the third quartile with one from a right-skewed heavy-tailed distribution leads to a right-skewed, heavy-tailed, and nonordered distribution. Therefore, the validity of robust measures of skewness based on the SQA-median difference is closely related to the orderliness of the distribution. 

Remarkably, in 2018, Li, Shao, Wang, Yang \cite{li2018worst} proved the bias bound of any quantile for arbitrary continuous distributions with finite second moments. Here, let $\mathcal{P}_{\mu,\sigma}$ denotes the set of continuous distributions whose mean is $\mu$ and standard deviation is $\sigma$. The bias upper bound of the quantile average for $P\in\mathcal{P}_{\mu=0,\sigma=1}$ is given in the following theorem.

\begin{theorem}\label{qabbb0}
The bias upper bound of the quantile average for any continuous distribution whose mean is zero and standard deviation is one is 
\begin{align*}
  \smashoperator[r]{\sup_{P\in \mathcal{P}_{\mu=0,\sigma=1}}}{\text{QA}}(\epsilon,\gamma)= \frac{1}{2}\left(\sqrt{\frac{\gamma\epsilon}{1-\gamma\epsilon}}+\sqrt{\frac{1-\epsilon}{\epsilon}}\right),
\end{align*}
where $0\leq\epsilon\leq\frac{1}{1+\gamma}$.
\end{theorem}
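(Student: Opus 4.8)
The plan is to decouple the quantile average into its two constituent quantiles and to bound each one separately with the sharp one-sided Chebyshev (Cantelli) inequality, which is precisely the single-quantile bias bound established by Li, Shao, Wang, and Yang \cite{li2018worst}. First I would record that for any $P\in\mathcal{P}_{\mu=0,\sigma=1}$ and any $t>0$, Cantelli's inequality gives $P(X\ge t)\le\frac{1}{1+t^{2}}$. Setting $t=Q(p)$ and using $P(X\ge Q(p))=1-p$, which holds because the distribution is continuous and hence atomless, this rearranges to the tight one-sided bound $Q(p)\le\sqrt{p/(1-p)}$, valid for every $p\in(0,1)$ (when $Q(p)\le0$ the bound holds trivially, since the right-hand side is nonnegative).

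Next I would apply this termwise. Taking $p=\gamma\epsilon$ gives $Q(\gamma\epsilon)\le\sqrt{\gamma\epsilon/(1-\gamma\epsilon)}$, and taking $p=1-\epsilon$ gives $Q(1-\epsilon)\le\sqrt{(1-\epsilon)/\epsilon}$. Since by definition [\ref{eq:1}] the quantile average is the arithmetic mean of these two quantiles, summing the two inequalities and halving yields $\text{QA}(\epsilon,\gamma)\le\frac{1}{2}(\sqrt{\gamma\epsilon/(1-\gamma\epsilon)}+\sqrt{(1-\epsilon)/\epsilon})$, which is exactly the asserted right-hand side. This settles the upper-bound direction for every admissible pair with $0\le\epsilon\le\frac{1}{1+\gamma}$.

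The substance of the theorem is the reverse direction, namely that this bound is the least upper bound and is approached in the limit. My approach here is to recall that each single-quantile Cantelli bound is attained by a two-point law: for $Q(1-\epsilon)$ the extremal law places mass $1-\epsilon$ at $-\sqrt{\epsilon/(1-\epsilon)}$ and mass $\epsilon$ at $\sqrt{(1-\epsilon)/\epsilon}$, and symmetrically for $Q(\gamma\epsilon)$. I would then smooth these atoms into genuinely continuous laws so that the quantile at the relevant level is pushed to the upper edge of the flat stretch of the distribution function, and attempt to build a single sequence of continuous distributions that realizes both edge values at once.

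I expect this joint-attainment step to be the main obstacle and the genuinely delicate part of the argument. The two extremal configurations pull in opposite directions: maximizing $Q(\gamma\epsilon)$ forces the bulk $1-\gamma\epsilon$ of the mass to sit at the small positive value $\sqrt{\gamma\epsilon/(1-\gamma\epsilon)}$, whereas maximizing $Q(1-\epsilon)$ forces the bulk $1-\epsilon$ of the mass to sit at the negative value $-\sqrt{\epsilon/(1-\epsilon)}$, and a single unit of variance cannot obviously accommodate both demands simultaneously. Reconciling these competing requirements, whether through a three-point extremal law interpolating between the two configurations, a careful vanishing-mass-to-infinity limit, or a Tchebycheff-system argument pinning down the true extremal support and its size, is the crux of the proof, and it is where I would concentrate the effort and lean most heavily on the finite-second-moment hypothesis.
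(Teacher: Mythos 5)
Your first two paragraphs are, in substance, the paper's entire proof: the paper writes $\sup_P \frac{1}{2}(Q(\gamma\epsilon)+Q(1-\epsilon)) \le \frac{1}{2}(\sup_P Q(\gamma\epsilon)+\sup_P Q(1-\epsilon))$ and then cites Lemma 2.6 of Li, Shao, Wang, and Yang, which is exactly the sharp Cantelli bound $Q(p)\le\sqrt{p/(1-p)}$ that you rederived. So for the inequality direction, your route and the paper's coincide, down to the same reference.

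The substantive point is your last two paragraphs: the joint-attainment step that you correctly identify as the crux does not appear in the paper at all --- its proof stops at the subadditivity step, which yields only ``$\le$'' --- and your suspicion that one unit of variance cannot serve both extremal configurations at once is in fact correct, so the program you sketch cannot be completed as an equality proof. Concretely: realizing $Q(\gamma\epsilon)=u$ and $Q(1-\epsilon)=v$ as a limit of continuous laws forces mass at most $\gamma\epsilon$ strictly below $u$ and mass at least $\epsilon$ at or above $v$, and the extremal law is three-point with masses $\gamma\epsilon$, $1-(1+\gamma)\epsilon$, $\epsilon$ at $a<u<v$; maximizing $u+v$ under the mean-zero, variance-one constraints then gives a value strictly below the displayed bound throughout the interior $0<\epsilon<\frac{1}{1+\gamma}$, because the two per-quantile two-point extremal laws coincide only at $\epsilon=\frac{1}{1+\gamma}$. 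For instance, at $\gamma=1$, $\epsilon=0.1$, eliminating $a$ reduces the constraints to $7.2u^2+1.6uv+0.2v^2=1$, whose maximum of $\frac{1}{2}(u+v)$ is about $1.346$, whereas the theorem's right-hand side is $\frac{5}{3}\approx1.667$; the displayed expression is tight only at the endpoint $\epsilon=\frac{1}{1+\gamma}$ (where both quantiles merge into the $\gamma$-median and the bound $\sqrt{\gamma}$ is approached by two-point laws) and in the limit $\epsilon\to0$, where the gap vanishes. So do not fault yourself for leaving the attainment direction open: the statement is provable only with ``$\le$'' in place of the asserted equality (or with the supremum replaced by the three-point extremal value), and the half you completed is precisely the proof the paper actually gives.
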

\begin{proof}
Since $\sup_{P\in \mathcal{P}_{\mu=0,\sigma=1}}{\frac{1}{2}(Q(\gamma\epsilon)+Q(1-\epsilon))}\leq\frac{1}{2}(\sup_{P\in \mathcal{P}_{\mu=0,\sigma=1}}{Q(\gamma\epsilon)}+\sup_{P\in \mathcal{P}_{\mu=0,\sigma=1}}{Q(1-\epsilon))}$, the assertion follows directly from the Lemma 2.6 in \cite{li2018worst}.
\end{proof}

In 2020, Bernard et al. \cite{bernard2020range} further refined these bounds for unimodal distributions and derived the bias bound of the symmetric quantile average. Here, the bias upper bound of the quantile average, $0\leq\gamma<5$, for $P\in \mathcal{P}_{U}\cap\mathcal{P}_{\mu=0,\sigma=1}$ is given as \begin{align*}\smashoperator[r]{\sup_{P\in \mathcal{P}_{U}\cap\mathcal{P}_{\mu=0,\sigma=1}}}{\text{QA}}(\epsilon,\gamma)= \begin{cases} \frac{1}{2}\left(\sqrt{\frac{4}{9\epsilon}-1}+\sqrt{\frac{3\gamma\epsilon}{4-3\gamma\epsilon}}\right) & 0\leq\epsilon\leq\frac{1}{6} \\ \frac{1}{2}\left(\sqrt{\frac{3(1-\epsilon)}{4-3(1-\epsilon)}}+\sqrt{\frac{3\gamma\epsilon}{4-3\gamma\epsilon}}\right) & \frac{1}{6}<\epsilon\leq\frac{1}{1+\gamma}. \end{cases} \end{align*} The proof based on the bias bounds of any quantile \cite{bernard2020range} and the $\gamma\geq5$ case are given in the SI Text. Subsequent theorems reveal the safeguarding role these bounds play in defining estimators based on $\nu$th $\gamma$-orderliness. 

\begin{theorem}\label{qabbb}$\sup_{P\in \mathcal{P}_{\mu=0,\sigma=1}}{\text{QA}}(\epsilon,\gamma)$ is monotonic decreasing with respect to $\epsilon$ over $[0,\frac{1}{1+\gamma}]$, provided that $0\leq\gamma\leq1$.\end{theorem}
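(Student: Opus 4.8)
The plan is to work directly from the explicit supremum furnished by Theorem~\ref{qabbb0}, $\sup_{P\in\mathcal{P}_{\mu=0,\sigma=1}}\text{QA}(\epsilon,\gamma)=\frac12\bigl(\sqrt{\tfrac{\gamma\epsilon}{1-\gamma\epsilon}}+\sqrt{\tfrac{1-\epsilon}{\epsilon}}\bigr)$, and to show its derivative in $\epsilon$ is nonpositive throughout $[0,\tfrac{1}{1+\gamma}]$ once $0\le\gamma\le1$. Writing $g(\epsilon)=\sqrt{\gamma\epsilon/(1-\gamma\epsilon)}$ and $h(\epsilon)=\sqrt{(1-\epsilon)/\epsilon}$, the first summand is increasing in $\epsilon$ and the second decreasing, so the entire assertion is the competition $|h'|\ge|g'|$; everything reduces to a one-line derivative comparison.

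First I would differentiate. A short computation via the chain rule, after writing the radicands as $\tfrac{1}{1/(\gamma\epsilon)-1}$ and $\tfrac{1}{\epsilon}-1$, gives $g'(\epsilon)=\sqrt{\gamma}\,\big/\big(2\sqrt{\epsilon}\,(1-\gamma\epsilon)^{3/2}\big)$ and $h'(\epsilon)=-1\big/\big(2\epsilon^{3/2}(1-\epsilon)^{1/2}\big)$. Requiring $g'+h'\le0$, clearing the positive denominators, and squaring the resulting nonnegative quantities collapses the inequality to the polynomial statement
\[
\gamma\,\epsilon^{2}(1-\epsilon)\le(1-\gamma\epsilon)^{3},\qquad 0\le\epsilon\le\tfrac{1}{1+\gamma},\ 0\le\gamma\le1.
\]
Squaring is legitimate precisely because both sides are nonnegative on the domain.

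The crux is then this polynomial inequality, which I would dispatch with two elementary bounds. Since $\epsilon(1-\epsilon)\le\tfrac14$ for $\epsilon\in[0,1]$, the left side is at most $\gamma\epsilon/4$; and because $\epsilon\le\tfrac{1}{1+\gamma}$, the quantity $a\coloneqq\gamma\epsilon$ satisfies $a\le\tfrac{\gamma}{1+\gamma}\le\tfrac12$, where the last step uses $\gamma\le1$ and is exactly where the hypothesis enters. It therefore suffices to verify $a/4\le(1-a)^{3}$ for $a\in[0,\tfrac12]$, which is immediate: $(1-a)^{3}-a/4$ has negative derivative and vanishes at $a=\tfrac12$, hence is nonnegative on the interval. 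I expect the main thing to watch is the sharpness of $\gamma\le1$: at $\gamma=1,\epsilon=\tfrac12$ every inequality above is an equality, matching the fact that a symmetric distribution has a constant QA function (Theorem~\ref{sdqi}), and for $\gamma>1$ the chain breaks at the upper endpoint, so no effortless extension is anticipated. The only other delicate point in the write-up is the boundary $\epsilon\to0$, where $g'$ and $h'$ both diverge, but $h'\sim-\epsilon^{-3/2}$ dominates $g'\sim\epsilon^{-1/2}$, so the sign of the sum is preserved.
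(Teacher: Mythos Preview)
Your argument is correct. Both you and the paper start the same way: differentiate the explicit formula from Theorem~\ref{qabbb0}, clear denominators, square, and reduce to the polynomial inequality $\gamma\,\epsilon^{2}(1-\epsilon)\le(1-\gamma\epsilon)^{3}$ on the relevant domain (the paper writes this in the variable $1/\epsilon$, but it is the same inequality). Where you diverge is in how you dispatch that inequality. The paper introduces the function $LmR(1/\epsilon)=\gamma^{-2}(\gamma-1/\epsilon)^{4}-(1/\epsilon-1)(1/(\gamma\epsilon)-1)$, expands its derivative into a cubic, and then pushes through a page of case-by-case algebraic bounding to establish monotonicity before checking the boundary value $LmR(1+\gamma)=\gamma^{-2}$. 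Your route is shorter and more transparent: the single observation $\epsilon(1-\epsilon)\le\tfrac14$ collapses the two-variable inequality to the one-variable fact $a/4\le(1-a)^{3}$ on $[0,\tfrac12]$, where $a=\gamma\epsilon$, and this is where the hypothesis $\gamma\le1$ is used cleanly (to guarantee $a\le\tfrac12$). The payoff is a much shorter proof with a visible explanation of why equality is attained exactly at $(\gamma,\epsilon)=(1,\tfrac12)$; the paper's longer calculation obscures this.
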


\begin{proof} $\frac{\partial \sup{\text{QA}}(\epsilon,\gamma)}{\partial\epsilon}=\frac{1}{4} \left(\frac{\gamma }{\sqrt{\frac{\gamma  \epsilon }{1-\gamma  \epsilon }} (\gamma  \epsilon -1)^2}-\frac{1}{\sqrt{\frac{1}{\epsilon }-1} \epsilon ^2}\right)$. When $\gamma=0$, $\frac{\partial \sup{\text{QA}}(\epsilon,\gamma)}{\partial\epsilon}=\frac{1}{4} \left(\frac{\sqrt{\gamma} }{\sqrt{\frac{ \epsilon }{1-\gamma  \epsilon }} (\gamma  \epsilon -1)^2}-\frac{1}{\sqrt{\frac{1}{\epsilon }-1} \epsilon ^2}\right)=-\frac{1}{\sqrt{\frac{1}{\epsilon }-1} \epsilon ^2}\leq0$. When $\epsilon\rightarrow0^{+}$, $\lim_{\epsilon \to0^{+}}{\left(\frac{1}{4} \left(\frac{\gamma }{\sqrt{\frac{\gamma  \epsilon }{1-\gamma  \epsilon }} (\gamma  \epsilon -1)^2}-\frac{1}{\sqrt{\frac{1}{\epsilon }-1} \epsilon ^2}\right)\right)}=\lim_{\epsilon\rightarrow0^+}{\left(\frac{1}{4}\left(\frac{\sqrt\gamma}{\sqrt\epsilon}-\frac{1}{\sqrt{\epsilon^3}}\right)\right)}\to-\infty$. Assuming $\epsilon>0$, when $0<\gamma\leq1$, to prove $\frac{\partial \sup{\text{QA}}(\epsilon,\gamma)}{\partial\epsilon}\leq0$, it is equivalent to showing $\frac{\sqrt{\frac{\gamma\epsilon}{1-\gamma\epsilon}}\left(\gamma\epsilon-1\right)^2}{\gamma}\geq\sqrt{\frac{1}{\epsilon}-1}\epsilon^2$. Define $L(\epsilon,\gamma)=\frac{\sqrt{\frac{\gamma\epsilon}{1-\gamma\epsilon}}\left(\gamma\epsilon-1\right)^2}{\gamma}$, $R(\epsilon,\gamma)=\sqrt{\frac{1}{\epsilon}-1}\epsilon^2$. $\frac{L(\epsilon,\gamma)}{\epsilon^2}=\frac{\sqrt{\frac{\gamma\epsilon}{1-\gamma\epsilon}}\left(\gamma\epsilon-1\right)^2}{\gamma\epsilon^2}=\frac{1}{\gamma}\sqrt{\frac{1}{\frac{1}{\gamma\epsilon}-1}}\left(\gamma-\frac{1}{\epsilon}\right)^2$, $\frac{R(\epsilon,\gamma)}{\epsilon^2}=\sqrt{\frac{1}{\epsilon}-1}$. Then, $\frac{L(\epsilon,\gamma)}{\epsilon^2}\geq\frac{R(\epsilon,\gamma)}{\epsilon^2}\Leftrightarrow\frac{1}{\gamma}\sqrt{\frac{1}{\frac{1}{\gamma\epsilon}-1}}\left(\gamma-\frac{1}{\epsilon}\right)^2\geq\sqrt{\frac{1}{\epsilon}-1}\Leftrightarrow\frac{1}{\gamma}\left(\gamma-\frac{1}{\epsilon}\right)^2\geq\sqrt{\frac{1}{\epsilon}-1}\sqrt{\frac{1}{\gamma\epsilon}-1}$. Let $LmR\left(\frac{1}{\epsilon}\right)=\frac{1}{\gamma^2}\left(\gamma-\frac{1}{\epsilon}\right)^4-\left(\frac{1}{\epsilon}-1\right)\left(\frac{1}{\gamma\epsilon}-1\right)$. $\frac{\partial LmR\left(1/\epsilon\right)}{\partial\left(1/\epsilon\right)}=-\frac{4 (\gamma -\frac{1}{\epsilon})^3}{\gamma ^2}-\frac{\frac{1}{\epsilon}-1}{\gamma }-\frac{1}{\gamma\epsilon}+1=\frac{-4 \gamma ^3+\gamma ^2+\gamma +4 \frac{1}{\epsilon^3}-12 \gamma  \frac{1}{\epsilon^2}+12 \gamma ^2 \frac{1}{\epsilon}-2 \gamma  \frac{1}{\epsilon}}{\gamma ^2}$. Since $0\leq\gamma\leq1$, $0\le\epsilon\le\frac{1}{1+\gamma}\Leftrightarrow 0\leq\gamma\le\frac{1}{\epsilon}-1\Leftrightarrow 1-\frac{1}{\epsilon}\leq-\gamma\leq0 \Leftrightarrow   1\le\frac{1}{\epsilon}-\gamma\le\frac{1}{\epsilon}$. The numerator of $\frac{\partial LmR\left(1/\epsilon\right)}{\partial\left(1/\epsilon\right)}$ can be simplified as $-4\gamma^3+\gamma^2+\gamma+4\frac{1}{\epsilon^3}-12\gamma\frac{1}{\epsilon^2}+12\gamma^2\frac{1}{\epsilon}-2\gamma\frac{1}{\epsilon}=4\left(\frac{1}{\epsilon}-\gamma\right)^3+\gamma^2+\gamma-2\gamma\frac{1}{\epsilon}=4\left(\frac{1}{\epsilon}-\gamma\right)^3-\gamma^2+\gamma-2\gamma\left(\frac{1}{\epsilon}-\gamma\right)=\gamma\left(1-\gamma\right)+2\left(\frac{1}{\epsilon}-\gamma\right)\left(2\left(\frac{1}{\epsilon}-\gamma\right)^2-\gamma\right)$. Since $2\left(\frac{1}{\epsilon}-\gamma\right)^2\geq2$, $2\left(\frac{1}{\epsilon}-\gamma\right)^2-\gamma\geq2$. Also, $\gamma\left(1-\gamma\right)\geq0$, $\left(\frac{1}{\epsilon}-\gamma\right)\geq0$, therefore, $\gamma\left(1-\gamma\right)+2\left(\frac{1}{\epsilon}-\gamma\right)\left(2\left(\frac{1}{\epsilon}-\gamma\right)^2-\gamma\right)\geq0$, $\frac{\partial LmR\left(1/\epsilon\right)}{\partial\left(1/\epsilon\right)}\geq0$. Also, $LmR\left(1+\gamma\right)=\frac{1}{\gamma^2}\left(\gamma-1-\gamma\right)^4-\left(1+\gamma-1\right)\left(\frac{1}{\gamma}\left(1+\gamma\right)-1\right)=\frac{1}{\gamma^2}\geq0$. Therefore, $LmR\left(\frac{1}{\epsilon}\right)\geq0$ for $\epsilon\in(0,\frac{1}{1+\gamma}]$, provided that $0<\gamma\leq1$. Consequently, the simplified inequality $\frac{1}{\gamma}\left(\gamma-\frac{1}{\epsilon}\right)^2\geq\sqrt{\frac{1}{\epsilon}-1}\sqrt{\frac{1}{\gamma\epsilon}-1}$ is valid. $\frac{\partial \sup{\text{QA}}(\epsilon,\gamma)}{\partial\epsilon}$ is non-positive throughout the interval $0\leq\epsilon\leq\frac{1}{1+\gamma}$, given that $0\leq\gamma\leq1$, the proof is complete. \end{proof}

\begin{theorem}\label{sqabb} $\sup_{P\in \mathcal{P}_{U}\cap\mathcal{P}_{\mu=0,\sigma=1}}{\text{QA}}(\epsilon,\gamma)$ is a nonincreasing function with respect to $\epsilon$ on the interval $[0,\frac{1}{1+\gamma}]$, provided that $0\leq\gamma\leq1$.\end{theorem}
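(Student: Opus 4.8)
The plan is to work directly with the explicit two-piece formula for $\sup_{P\in\mathcal{P}_{U}\cap\mathcal{P}_{\mu=0,\sigma=1}}\text{QA}(\epsilon,\gamma)$ displayed just before Theorem~\ref{qabbb}, mirroring the strategy used there: establish continuity at the breakpoint $\epsilon=\frac{1}{6}$ and then show the $\epsilon$-derivative is nonpositive on each of the two pieces. Writing $h(p)\coloneqq\sqrt{\frac{3p}{4-3p}}$ for the lower bias profile (which governs the lower term in \emph{both} pieces, since $\gamma\epsilon\le\frac{\gamma}{1+\gamma}\le\frac{1}{2}<\frac{5}{6}$ whenever $\gamma\le1$), one has $h'(p)=\frac{2\sqrt{3}}{\sqrt{p}(4-3p)^{3/2}}>0$, so the lower term $h(\gamma\epsilon)$ is increasing in $\epsilon$; the bound is nonincreasing exactly when the fall of the upper term dominates this rise. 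I would first verify that the two branches agree at $\epsilon=\frac{1}{6}$ (both reduce to $\frac{1}{2}(\sqrt{5/3}+h(\gamma/6))$), so the function is continuous there and it suffices to prove monotonicity on each piece separately.

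For the first piece $0\le\epsilon\le\frac{1}{6}$, differentiating and clearing denominators (then squaring, which is legitimate as both sides are positive) turns $\frac{\partial}{\partial\epsilon}\le0$ into $27\gamma\epsilon^2(4-9\epsilon)\le(4-3\gamma\epsilon)^3$. I expect this to be immediate: on $\gamma\le1,\ \epsilon\le\frac{1}{6}$ the left side is at most $27\cdot1\cdot\frac{1}{36}\cdot4=3$, while the right side is at least $(4-\frac{1}{2})^3=\frac{343}{8}$, so the inequality holds with an enormous margin.

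The substantive work is the second piece $\frac{1}{6}<\epsilon\le\frac{1}{1+\gamma}$, where the same differentiate-and-square reduction yields $\gamma(1-\epsilon)(1+3\epsilon)^3\le\epsilon(4-3\gamma\epsilon)^3$. The key simplification I would use is to fix $\epsilon$ and treat $\Phi(\gamma)\coloneqq\epsilon(4-3\gamma\epsilon)^3-\gamma(1-\epsilon)(1+3\epsilon)^3$ as a function of $\gamma$: its derivative $\Phi'(\gamma)=-9\epsilon^2(4-3\gamma\epsilon)^2-(1-\epsilon)(1+3\epsilon)^3$ is manifestly negative, so $\Phi$ is decreasing and it suffices to check $\Phi\ge0$ at the largest admissible value $\gamma_{\max}=\min(1,\frac{1-\epsilon}{\epsilon})$. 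This collapses the bivariate inequality to two univariate checks. When $\epsilon\ge\frac{1}{2}$ the maximum is the constraint boundary $\gamma=\frac{1-\epsilon}{\epsilon}$, where $\gamma\epsilon=1-\epsilon$ and $\Phi$ simplifies to $(1+3\epsilon)^3\frac{2\epsilon-1}{\epsilon}\ge0$; when $\frac{1}{6}<\epsilon<\frac{1}{2}$ the maximum is $\gamma=1$, and the remaining polynomial factors cleanly as $\epsilon(4-3\epsilon)^3-(1-\epsilon)(1+3\epsilon)^3=(2\epsilon-1)(54\epsilon^2-54\epsilon+1)$, whose two factors are both nonpositive on $[\frac{1}{6},\frac{1}{2}]$ (the quadratic has roots near $0.019$ and $0.981$), making the product nonnegative.

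I anticipate the main obstacle to be precisely this second piece, and within it the observation that monotonicity of $\Phi$ in the auxiliary variable $\gamma$ reduces the two-variable inequality to its boundary $\gamma_{\max}$; once that reduction is secured, the factorization $(2\epsilon-1)(54\epsilon^2-54\epsilon+1)$ and the boundary identity make the sign analysis routine. Finally, continuity at $\epsilon=\frac{1}{6}$ together with piecewise monotonicity gives that the bound is nonincreasing across the whole interval $[0,\frac{1}{1+\gamma}]$, completing the argument.
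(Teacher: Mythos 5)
Your proposal is correct, and I verified the details: your reduced inequality for the first piece, $27\gamma\epsilon^2(4-9\epsilon)\le(4-3\gamma\epsilon)^3$, is exactly the paper's $\frac{1}{\gamma^2}\left(\frac{4}{\epsilon}-3\gamma\right)^4\geq9\left(\frac{12}{\epsilon\gamma}-9\right)\left(\frac{4}{\epsilon}-9\right)$ after multiplying through by $\frac{4-3\gamma\epsilon}{\gamma^2\epsilon^4}$, and your $\Phi(\gamma)\ge0$ for the second piece, divided by $\gamma$ and expanded, is precisely the paper's $-27\gamma^2\epsilon^4+108\gamma\epsilon^3+\frac{64\epsilon}{\gamma}+27\epsilon^4-162\epsilon^2-8\epsilon-1\geq0$; your factorization $\epsilon(4-3\epsilon)^3-(1-\epsilon)(1+3\epsilon)^3=(2\epsilon-1)(54\epsilon^2-54\epsilon+1)$ and the boundary identity at $\gamma\epsilon=1-\epsilon$ both check out. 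So you share the paper's skeleton (the explicit two-piece bound, sign of the $\epsilon$-derivative on each piece, squaring to polynomial form, continuity at $\epsilon=\frac{1}{6}$) but verify the two core inequalities by a genuinely different and leaner route. On the first piece, where the paper runs a full auxiliary-function argument — defining $LmR(\frac{1}{\epsilon})$, proving $\frac{\partial LmR}{\partial(1/\epsilon)}>0$ through a chain of estimates, and evaluating at $\frac{1}{\epsilon}=6$ — you observe the inequality holds with huge slack ($\text{LHS}\le3$ versus $\text{RHS}\ge\frac{343}{8}$ on $\gamma\le1$, $\epsilon\le\frac{1}{6}$), which is all that piece requires. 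On the second piece, the paper first discards the term $27\epsilon^4(1-\gamma^2)\ge0$ to get a sufficient condition, then analyzes $h(\gamma)=108\gamma\epsilon^3+\frac{64\epsilon}{\gamma}$ for its boundary minimum and finishes with univariate calculus on $g(\epsilon)$ and $nu(\epsilon)$; you instead keep the inequality exact, note $\Phi'(\gamma)=-9\epsilon^2(4-3\gamma\epsilon)^2-(1-\epsilon)(1+3\epsilon)^3<0$ is manifestly negative, and reduce to two boundary evaluations that factor in closed form. Your version buys brevity, avoids the lossy simplification, and makes the sharpness visible — equality occurs exactly at $(\epsilon,\gamma)=(\frac{1}{2},1)$, which is where the paper's calculations also bottom out ($g(\frac{1}{2})=0$, $nu(\frac{1}{2})=0$); what the paper's longer route buys is mainly methodological uniformity with its proof of the preceding theorem on $\sup_{P\in\mathcal{P}_{\mu=0,\sigma=1}}\text{QA}$, which uses the same $LmR$-style machinery.
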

\begin{proof}
When $0\leq\epsilon\leq\frac{1}{6}$, $\frac{\partial \sup{\text{QA}}}{\partial\epsilon}=\frac{\gamma}{\sqrt{\frac{\epsilon \gamma}{12-9 \epsilon \gamma}} (4-3 \epsilon \gamma)^2}-\frac{1}{3 \sqrt{\frac{4}{\epsilon}-9} \epsilon^2}=\frac{\sqrt\gamma}{\sqrt{\frac{\epsilon}{12-9\epsilon\gamma}}\left(4-3\epsilon\gamma\right)^2}-\frac{1}{3 \sqrt{\frac{4}{\epsilon}-9} \epsilon^2}$. If $\gamma=0$ and $\epsilon\rightarrow0^{+}$, $\frac{\partial \sup{\text{QA}}}{\partial\epsilon}=-\frac{1}{3 \sqrt{\frac{4}{\epsilon}-9} \epsilon^2}<0$. If $\epsilon\rightarrow0^{+}$, $\lim_{\epsilon \to0^{+}}{\left(\frac{\gamma }{(4-3 \gamma  \epsilon )^2 \sqrt{\frac{\epsilon \gamma }{12-9 \gamma  \epsilon }}}-\frac{1}{3 \sqrt{\frac{4}{\epsilon }-9} \epsilon ^2}\right)}=\lim_{\epsilon \to0^{+}}{\left(\frac{\sqrt{3\gamma} }{\sqrt{4^3 \epsilon }}-\frac{1}{6 \sqrt{\epsilon ^3 }}\right)}\to-\infty$, for all $0\leq\gamma\leq1$, so, $\frac{\partial \sup{\text{QA}}}{\partial\epsilon}<0$. When $0<\epsilon\leq\frac{1}{6}$ and $0<\gamma\leq1$, to prove $\frac{\partial \sup{\text{QA}}}{\partial\epsilon}\leq0$, it is equivalent to showing $\frac{\sqrt{\frac{\epsilon \gamma}{12-9 \epsilon \gamma}} (4-3 \epsilon \gamma)^2}{\gamma}\geq3 \sqrt{\frac{4}{\epsilon}-9} \epsilon^2$. Define $L(\epsilon,\gamma)=\frac{\sqrt{\frac{\epsilon \gamma}{12-9 \epsilon \gamma}} (4-3 \epsilon \gamma)^2}{\gamma}$, $R(\epsilon,\gamma)=3\sqrt{\frac{4}{\epsilon}-9}\epsilon^2$. $\frac{L(\epsilon,\gamma)}{\epsilon^2}=\frac{\sqrt{\frac{\epsilon \gamma}{12-9 \epsilon \gamma}} (4-3 \epsilon \gamma)^2}{\gamma\epsilon^2}=\frac{1}{\gamma}\left(\frac{4}{\epsilon}-3\gamma\right)^2\sqrt{\frac{1}{\frac{12}{\epsilon\gamma}-9}}$, $\frac{R(\epsilon,\gamma)}{\epsilon^2}=3\sqrt{\frac{4}{\epsilon}-9}$. Then, the above inequality is equivalent to $\frac{L(\epsilon,\gamma)}{\epsilon^2}\geq\frac{R(\epsilon,\gamma)}{\epsilon^2}\Leftrightarrow\frac{1}{\gamma}\sqrt{\frac{1}{\frac{12}{\epsilon\gamma}-9}}\left(\frac{4}{\epsilon}-3\gamma\right)^2\geq3\sqrt{\frac{4}{\epsilon}-9}\Leftrightarrow\frac{1}{\gamma}\left(\frac{4}{\epsilon}-3\gamma\right)^2\geq3\sqrt{\frac{12}{\epsilon\gamma}-9}\sqrt{\frac{4}{\epsilon}-9}\Leftrightarrow \frac{1}{\gamma^2}\left(\frac{4}{\epsilon}-3\gamma\right)^4\geq9\left(\frac{12}{\epsilon\gamma}-9\right)\left(\frac{4}{\epsilon}-9\right)$. Let $LmR\left(\frac{1}{\epsilon}\right)=\frac{1}{\gamma^2}\left(\frac{4}{\epsilon}-3\gamma\right)^4-9\left(\frac{12}{\epsilon\gamma}-9\right)\left(\frac{4}{\epsilon}-9\right)$. $\frac{\partial LmR\left(1/\epsilon\right)}{\partial\left(1/\epsilon\right)}=\frac{16\left(\frac{4}{\epsilon}-3\gamma\right)^3}{\gamma^2}-36\left(\frac{12}{\epsilon\gamma}-9\right)-\frac{108\left(4\frac{4}{\epsilon}-9\right)}{\gamma}=\frac{4 \left(4 (\frac{4}{\epsilon}-3 \gamma )^3-27 \gamma  (\frac{4}{\epsilon}-3 \gamma )+27 (9-\frac{4}{\epsilon}) \gamma \right)}{\gamma ^2}=\frac{4 \left(256 \frac{1}{\epsilon}^3-576 \frac{1}{\epsilon}^2 \gamma +432 \frac{1}{\epsilon} \gamma ^2-216 \frac{1}{\epsilon} \gamma -108 \gamma ^3+81 \gamma ^2+243 \gamma \right)}{\gamma ^2}$. Since $256 \frac{1}{\epsilon}^3-576 \frac{1}{\epsilon}^2 \gamma +432 \frac{1}{\epsilon} \gamma ^2-216 \frac{1}{\epsilon} \gamma -108 \gamma ^3+81 \gamma ^2+243 \gamma\geq1536 \frac{1}{\epsilon}^2-576 \frac{1}{\epsilon}^2 +432 \frac{1}{\epsilon} \gamma ^2-216 \frac{1}{\epsilon} \gamma -108 \gamma ^3+81 \gamma ^2+243 \gamma\geq924 \frac{1}{\epsilon}^2+36 \frac{1}{\epsilon}^2-216 \frac{1}{\epsilon}+432 \frac{1}{\epsilon} \gamma ^2 -108 \gamma ^3+81 \gamma ^2+243 \gamma\geq924 \frac{1}{\epsilon}^2+36 \frac{1}{\epsilon}^2-216 \frac{1}{\epsilon}+513 \gamma ^2 -108 \gamma ^3+243 \gamma>0$, $\frac{\partial LmR\left(1/\epsilon\right)}{\partial\left(1/\epsilon\right)}>0$. Also, $LmR\left(6\right)=\frac{81 (\gamma -8) \left((\gamma -8)^3+15 \gamma \right)}{\gamma ^2}>0\Longleftrightarrow\gamma ^4-32 \gamma ^3+399 \gamma ^2-2168 \gamma +4096>0$. If $0<\gamma\leq1$, then $32 \gamma ^3<256$. Also, $\gamma^4>0$. So, it suffices to prove that $399 \gamma ^2-2168 \gamma +4096>256$. Applying the quadratic formula demonstrates the validity of $LmR\left(6\right)>0$, if $0<\gamma\leq1$. Hence, $LmR\left(\frac{1}{\epsilon}\right)\geq0$ for $\epsilon\in(0,\frac{1}{6}]$, if $0<\gamma\leq1$. The first part is finished.

When $\frac{1}{6}<\epsilon\leq\frac{1}{1+\gamma}$, $\frac{\partial \sup{\text{QA}}}{\partial\epsilon}=\sqrt3\left(\frac{\gamma}{\sqrt{\gamma\epsilon}\left(4-3\gamma\epsilon\right)^\frac{3}{2}}-\frac{1}{\sqrt{1-\epsilon}\left(3\epsilon+1\right)^\frac{3}{2}}\right)$. If $\gamma=0$,  $\frac{\gamma}{\sqrt{\gamma\epsilon}\left(4-3\gamma\epsilon\right)^\frac{3}{2}}=\frac{\sqrt{\gamma}}{\sqrt{\epsilon}\left(4-3\gamma\epsilon\right)^\frac{3}{2}}=0$, so $\frac{\partial \sup{\text{QA}}}{\partial\epsilon}=\sqrt3\left(-\frac{1}{\sqrt{1-\epsilon}\left(3\epsilon+1\right)^\frac{3}{2}}\right)<0$, for all $\frac{1}{6}<\epsilon\leq\frac{1}{1+\gamma}$. If $\gamma>0$, to determine whether $\frac{\partial \sup{\text{QA}}}{\partial\epsilon}\leq0$, when $\frac{1}{6}<\epsilon\leq\frac{1}{1+\gamma}$, since $\sqrt{1-\epsilon}\left(3\epsilon+1\right)^\frac{3}{2}>0$ and $\sqrt{\gamma\epsilon}\left(4-3\gamma\epsilon\right)^\frac{3}{2}>0$, showing $\frac{\sqrt{\gamma\epsilon}\left(4-3\gamma\epsilon\right)^\frac{3}{2}}{\gamma}\geq\sqrt{1-\epsilon}\left(3\epsilon+1\right)^\frac{3}{2}\Leftrightarrow\frac{\gamma\epsilon\left(4-3\gamma\epsilon\right)^3}{\gamma^2}\geq(1-\epsilon)\left(3\epsilon+1\right)^3\Leftrightarrow-27 \gamma ^2 \epsilon ^4+108 \gamma  \epsilon ^3+\frac{64 \epsilon }{\gamma }+27 \epsilon ^4-162 \epsilon ^2-8 \epsilon -1\geq0$ is sufficient. When $0<\gamma\leq1$, the inequality can be further simplified to $108 \gamma  \epsilon ^3+\frac{64 \epsilon }{\gamma }-162 \epsilon ^2-8 \epsilon -1\geq0$. Since $\epsilon\leq\frac{1}{1+\gamma}$, $\gamma\leq\frac{1}{\epsilon}-1$. Also, as $0<\gamma\leq1$ is assumed, the range of $\gamma$ can be expressed as $0<\gamma\leq \min(1,\frac{1}{\epsilon}-1)$. When $\frac{1}{6}<\epsilon\leq\frac{1}{2}$, $1<\frac{1}{\epsilon}-1$, so in this case, $0<\gamma\leq 1$. When $\frac{1}{2}\leq\epsilon<1$, so in this case, $0<\gamma\leq \frac{1}{\epsilon}-1$. Let $h(\gamma)=108\gamma\epsilon^3+\frac{64\epsilon}{\gamma}$, $\frac{\partial h(\gamma)}{\partial\gamma}=108\epsilon^3-\frac{64\epsilon}{\gamma^2}$. When $\gamma\leq\sqrt{\frac{64\epsilon}{18\epsilon^3}}$, $\frac{\partial h(\gamma)}{\partial\gamma}\geq0$, when $\gamma\geq\sqrt{\frac{64\epsilon}{18\epsilon^3}}$, $\frac{\partial h(\gamma)}{\partial\gamma}\leq0$, therefore, the minimum of $h(\gamma)$ must be when $\gamma$ is equal to the boundary point of the domain. When $\frac{1}{6}<\epsilon\leq\frac{1}{2}$, $0<\gamma\leq 1$, since $h(0)\rightarrow\infty$, $h(1)=108\epsilon^3+64\epsilon$, the minimum occurs at the boundary point $\gamma=1$, $108 \gamma  \epsilon ^3+\frac{64 \epsilon }{\gamma }-162 \epsilon ^2-8 \epsilon -1>108  \epsilon ^3+56 \epsilon-162 \epsilon ^2 -1$. Let $g(\epsilon)=108  \epsilon ^3+56 \epsilon-162 \epsilon ^2 -1$. $g'(\epsilon)=324 \epsilon ^2-324 \epsilon +56$, when $\epsilon\leq\frac{2}{9}$, $g'(\epsilon)\geq0$, when $\frac{2}{9}\leq\epsilon\leq\frac{1}{2}$, $g'(\epsilon)\leq0$, since $g(\frac{1}{6})=\frac{13}{3}$, $g(\frac{1}{2})=0$, so $g(\epsilon)\geq0$, $108 \gamma  \epsilon ^3+\frac{64 \epsilon }{\gamma }-162 \epsilon ^2-8 \epsilon -1\geq0$. When $\frac{1}{2}\leq\epsilon<1$, $0<\gamma\leq \frac{1}{\epsilon}-1$. Since $h(\frac{1}{\epsilon}-1)=108(\frac{1}{\epsilon}-1)\epsilon^3+\frac{64\epsilon}{\frac{1}{\epsilon}-1}$, $108 \gamma  \epsilon ^3+\frac{64 \epsilon }{\gamma }-162 \epsilon ^2-8 \epsilon -1>108\left(\frac{1}{\epsilon}-1\right)\epsilon^3+\frac{64\epsilon}{\frac{1}{\epsilon}-1}-162 \epsilon ^2-8 \epsilon -1=\frac{-108 \epsilon ^4+54 \epsilon ^3-18 \epsilon ^2+7 \epsilon +1}{\epsilon -1}$. Let $nu(\epsilon)=-108 \epsilon ^4+54 \epsilon ^3-18 \epsilon ^2+7 \epsilon +1$, then $nu'(\epsilon)=-432 \epsilon ^3+162 \epsilon ^2-36 \epsilon +7$, $nu''(\epsilon)=-1296 \epsilon ^2+324 \epsilon -36<0$. Since $nu'(\epsilon=\frac{1}{2})=-\frac{49}{2}<0$, $nu'(\epsilon)<0$. Also, $nu(\epsilon=\frac{1}{2})=0$, so $nu(\epsilon)\geq0$, $108 \gamma  \epsilon ^3+\frac{64 \epsilon }{\gamma }-162 \epsilon ^2-8 \epsilon -1\geq0$ is also valid. As a result, this simplified inequality is valid within the range of $\frac{1}{6}<\epsilon\leq\frac{1}{1+\gamma}$, when $0<\gamma\leq1$. Then, it validates $\frac{\partial \sup{\text{QA}}}{\partial\epsilon}\leq0$ for the same range of $\epsilon$ and $\gamma$. 

The first and second formulae, when $\epsilon=\frac{1}{6}$, are all equal to $\frac{1}{2} \left(\frac{\sqrt{\frac{\gamma }{4-\frac{\gamma }{2}}}}{\sqrt{2}}+\sqrt{\frac{5}{3}}\right)$. It follows that $\sup{\text{QA}}(\epsilon,\gamma)$ is continuous over $[0, \frac{1}{1+\gamma}]$. Hence, $\frac{\partial \sup{\text{QA}}}{\partial\epsilon}\leq0$ holds for the entire range $0\leq\epsilon\leq\frac{1}{1+\gamma}$, when $0\leq\gamma\leq1$, which leads to the assertion of this theorem.
\end{proof}

Let $\mathcal{P}_{\Upsilon}^{k}$ denote the set of all continuous distributions whose moments, from the first to the $k$th, are all finite. For a right-skewed distribution, it suffices to consider the upper bound. The monotonicity of $\sup_{P\in \mathcal{P}_{\Upsilon}^{2}}{\text{QA}}$  with respect to $\epsilon$ implies that the extent of any violations of the $\gamma$-orderliness, if $0\leq\gamma\leq1$, is bounded for any distribution with a finite second moment, e.g., for a right-skewed distribution in $\mathcal{P}_{\Upsilon}^{2}$, if $0\leq\epsilon_1\leq\epsilon_2\leq\epsilon_3\leq\frac{1}{1+\gamma}$, $\text{QA}_{\epsilon_2,\gamma} \geq \text{QA}_{\epsilon_3,\gamma} \geq \text{QA}_{\epsilon_1,\gamma}$, then $\text{QA}_{\epsilon_2,\gamma}$ will not be too far away from $\text{QA}_{\epsilon_1,\gamma}$, since $\sup_{P\in \mathcal{P}_{\Upsilon}^{2}}{\text{QA}_{\epsilon_1,\gamma}}>\sup_{P\in \mathcal{P}_{\Upsilon}^{2}}{\text{QA}_{\epsilon_2,\gamma}}>\sup_{P\in \mathcal{P}_{\Upsilon}^{2}}{\text{QA}_{\epsilon_3,\gamma}}$. Moreover, a stricter bound can be established for unimodal distributions according to Bernard et al. 's result \cite{bernard2020range}. The violation of $\nu$th $\gamma$-orderliness, when $\nu\geq2$, is also bounded, since the QA function is bounded, the $\nu$th $\gamma$-orderliness corresponds to the higher-order derivatives of the QA function with respect to $\epsilon$.

\section*{The Impact of $\gamma$-Orderliness on Weighted Inequalities}\label{sectionB}

Analogous to the $\gamma$-orderliness, the $\gamma$-trimming inequality for a right-skewed distribution is defined as $\forall 0\leq\epsilon_1\leq\epsilon_2\leq\frac{1}{1+\gamma}, \text{TM}_{\epsilon_1,\gamma} \geq \text{TM}_{\epsilon_2,\gamma}\text{.}$ $\gamma$-orderliness is a sufficient condition for the $\gamma$-trimming inequality, as proven in the SI Text. The next theorem shows a relation between the $\epsilon$,$\gamma$-quantile average and the $\epsilon$,$\gamma$-trimmed mean under the $\gamma$-trimming inequality, suggesting the $\gamma$-orderliness is not a necessary condition for the $\gamma$-trimming inequality.

\begin{theorem}\label{rmarkqi}For a distribution that is right-skewed and follows the $\gamma$-trimming inequality, it is asymptotically true that the quantile average is always greater or equal to the corresponding trimmed mean with the same $\epsilon$ and $\gamma$, for all $0\leq\epsilon\leq\frac{1}{1+\gamma}$.\end{theorem}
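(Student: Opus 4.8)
The plan is to read the inequality off as the infinitesimal form of the $\gamma$-trimming inequality, using the integral representation of the trimmed mean. First I would recall that, by the weighted-average definition with the trimming weight $w(\epsilon_0)$ equal to $0$ for $\epsilon_0<\epsilon$ and $1$ for $\epsilon_0\geq\epsilon$, the trimmed mean is nothing but the mean value of the quantile-average function over the trimming window, $\text{TM}(\epsilon,\gamma)=\frac{1}{\frac{1}{1+\gamma}-\epsilon}\int_{\epsilon}^{\frac{1}{1+\gamma}}\text{QA}(\epsilon_0,\gamma)\,d\epsilon_0$. Abbreviating $b=\frac{1}{1+\gamma}$ and $I(\epsilon)=\int_{\epsilon}^{b}\text{QA}(\epsilon_0,\gamma)\,d\epsilon_0$, so that $\text{TM}(\epsilon,\gamma)=I(\epsilon)/(b-\epsilon)$, the asymptotic (population) setting is exactly what guarantees that $\text{QA}(\cdot,\gamma)$ is a genuine continuous function of $\epsilon$; the fundamental theorem of calculus then gives $I'(\epsilon)=-\text{QA}(\epsilon,\gamma)$ and makes $\text{TM}(\cdot,\gamma)$ differentiable on $[0,b)$.

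Next I would differentiate by the quotient rule to get $\frac{\partial\text{TM}}{\partial\epsilon}=\frac{-\text{QA}(\epsilon,\gamma)(b-\epsilon)+I(\epsilon)}{(b-\epsilon)^2}$. The $\gamma$-trimming inequality is precisely the assertion that $\text{TM}(\cdot,\gamma)$ is non-increasing, i.e. $\frac{\partial\text{TM}}{\partial\epsilon}\leq0$ throughout $[0,b)$. Since $(b-\epsilon)^2>0$, this is equivalent to $I(\epsilon)\leq\text{QA}(\epsilon,\gamma)(b-\epsilon)$, and dividing through by the positive factor $b-\epsilon$ yields exactly $\text{TM}(\epsilon,\gamma)=\frac{I(\epsilon)}{b-\epsilon}\leq\text{QA}(\epsilon,\gamma)$ for every $\epsilon\in[0,b)$, which is the claim. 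The boundary point is then dispatched by continuity: at $\epsilon=b=\frac{1}{1+\gamma}$ the window degenerates to a single point, where $\text{QA}(b,\gamma)=Q(\frac{\gamma}{1+\gamma})$ is the $\gamma$-median, and a limiting $0/0$ argument on $I(\epsilon)/(b-\epsilon)$ shows the two sides coincide, so the non-strict inequality extends to the closed interval.

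The delicate points, and hence the places where care is needed rather than routine computation, are twofold. The first is the degenerate endpoint where numerator and denominator vanish simultaneously, which must be handled by the limiting argument above instead of naive substitution. The second, and the real reason for the word \emph{asymptotically} in the statement, is the justification of the differentiation and the fundamental theorem of calculus: at the population level $\text{QA}(\cdot,\gamma)$ is continuous and the identity $\text{TM}=I/(b-\epsilon)$ is exact, whereas for finite samples the order-statistic definition of $\text{QA}$ is piecewise constant and the clean relation holds only in the limit. Finally, I would point out that every step in the second paragraph is an equivalence, so the argument in fact establishes that the $\gamma$-trimming inequality is \emph{equivalent} to the pointwise relation $\text{QA}(\epsilon,\gamma)\geq\text{TM}(\epsilon,\gamma)$; this is what makes transparent the preceding remark that $\gamma$-orderliness is a sufficient but not a necessary condition for the $\gamma$-trimming inequality.
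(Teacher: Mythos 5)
Your argument is, at its core, the paper's own proof recast in derivative form: the paper instantiates the $\gamma$-trimming inequality at trimming levels differing by an infinitesimal $\delta$ and passes to the limit of the difference quotient, which is precisely your computation $\frac{\partial\mathrm{TM}}{\partial\epsilon}=\frac{\mathrm{TM}(\epsilon,\gamma)-\mathrm{QA}(\epsilon,\gamma)}{b-\epsilon}\le 0$ with $b=\frac{1}{1+\gamma}$. Your two additions---the careful treatment of the degenerate endpoint $\epsilon=b$, and the observation that every step reverses, so the $\gamma$-trimming inequality is \emph{equivalent} to the pointwise relation $\mathrm{QA}\ge\mathrm{TM}$---are correct and genuinely sharpen the paper's remark that $\gamma$-orderliness is sufficient but not necessary for the trimming inequality.

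There is, however, one concrete point where your proof and the paper's diverge, and for $\gamma\ne 1$ it matters. Your opening ``recall,'' the identity $\mathrm{TM}(\epsilon,\gamma)=\frac{1}{b-\epsilon}\int_{\epsilon}^{b}\mathrm{QA}(\epsilon_0,\gamma)\,d\epsilon_0$, is the indicator-weight reading of the paper's WA definition; but the trimmed mean the paper's proof actually manipulates is the uniformly weighted one, $\mathrm{TM}_{\epsilon,\gamma}=\frac{1}{1-(1+\gamma)\epsilon}\int_{\gamma\epsilon}^{1-\epsilon}Q(u)\,du$, and the two coincide only at $\gamma=1$. Indeed, a change of variables gives
\begin{align*}
\frac{1}{b-\epsilon}\int_{\epsilon}^{b}\mathrm{QA}(\epsilon_0,\gamma)\,d\epsilon_0=\frac{1}{b-\epsilon}\left(\frac{1}{2\gamma}\int_{\gamma\epsilon}^{\frac{\gamma}{1+\gamma}}Q(u)\,du+\frac{1}{2}\int_{\frac{\gamma}{1+\gamma}}^{1-\epsilon}Q(u)\,du\right),
\end{align*}
which places density $\frac{1}{2\gamma(b-\epsilon)}$ below the $\gamma$-median and $\frac{1}{2(b-\epsilon)}$ above it, whereas the uniform trimmed mean places $\frac{1}{(1+\gamma)(b-\epsilon)}$ on both; these agree if and only if $\gamma=1$. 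If you instead differentiate the uniform $\mathrm{TM}$ in the single coupled parameter $\epsilon$, the boundary term is $\frac{\gamma Q(\gamma\epsilon)+Q(1-\epsilon)}{1+\gamma}$, so monotonicity yields $\frac{\gamma Q(\gamma\epsilon)+Q(1-\epsilon)}{1+\gamma}\ge\mathrm{TM}$, from which the unweighted $\mathrm{QA}=\frac{Q(\gamma\epsilon)+Q(1-\epsilon)}{2}\ge\mathrm{TM}$ does \emph{not} follow when $\gamma<1$, since that weighting overweights the larger quantile $Q(1-\epsilon)$. The paper obtains the unweighted average by expanding both trim points by the same $\delta$---a perturbation which, for $\gamma\ne1$, is itself not literally an instance of the trimming inequality with coupled trim points $(\gamma\epsilon,1-\epsilon)$. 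In short: at $\gamma=1$ your proof is correct and essentially identical to the paper's; for $\gamma\ne1$ it is valid only under the WA definition of the trimmed mean, not under the formula the paper's own proof uses---a tension inherited from the paper rather than introduced by you, but your equivalence claim should be stated for the WA version of $\mathrm{TM}$, and the bridge to the uniform trimmed mean for general $\gamma$ still needs to be supplied.
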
\begin{proof}According to the definition of the $\gamma$-trimming inequality: $\forall 0\leq\epsilon\leq\frac{1}{1+\gamma}$, $\frac{1}{1-\epsilon-\gamma\epsilon+2\delta}\int_{\gamma\epsilon-\delta}^{1-\epsilon+\delta}{Q\left(u\right)du}\geq\frac{1}{1-\epsilon-\gamma\epsilon}\int_{\gamma\epsilon}^{1-\epsilon}{Q\left(u\right)du}$, where $\delta$ is an infinitesimal positive quantity. Subsequently, rewriting the inequality gives $\int_{\gamma\epsilon-\delta}^{1-\epsilon+\delta}{Q\left(u\right)du}-\frac{1-\epsilon-\gamma\epsilon+2\delta}{1-\epsilon-\gamma\epsilon}\int_{\gamma\epsilon}^{1-\epsilon}{Q\left(u\right)du}\geq0 \Leftrightarrow\int_{1-\epsilon}^{1-\epsilon+\delta}{Q\left(u\right)du}+\int_{\gamma\epsilon-\delta}^{\gamma\epsilon}{Q\left(u\right)du}-\frac{2\delta}{1-\epsilon-\gamma\epsilon}\int_{\gamma\epsilon}^{1-\epsilon}{Q\left(u\right)du}\geq0$. Since $\delta\rightarrow0^{+}$, $\frac{1}{2\delta}\left(\int_{1-\epsilon}^{1-\epsilon+\delta}{Q\left(u\right)du}+\int_{\gamma\epsilon-\delta}^{\gamma\epsilon}{Q\left(u\right)du}\right)=\frac{Q(\gamma\epsilon)+Q(1-\epsilon)}{2}\geq\frac{1}{1-\epsilon-\gamma\epsilon}\int_{\gamma\epsilon}^{1-\epsilon}{Q\left(u\right)du}$, the proof is complete.\end{proof}

An analogous result about the relation between the $\epsilon$,$\gamma$-trimmed mean and the $\epsilon$,$\gamma$-Winsorized mean can be obtained in the following theorem.\begin{theorem}\label{wti}For a right-skewed distribution following the $\gamma$-trimming inequality, asymptotically, the Winsorized mean is always greater or equal to the corresponding trimmed mean with the same $\epsilon$ and $\gamma$, for all $0\leq\epsilon\leq\frac{1}{1+\gamma}$, provided that $0\leq\gamma\leq1$. If assuming $\gamma$-orderliness, the inequality is valid for any non-negative $\gamma$. \end{theorem}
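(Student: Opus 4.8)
The plan is to reduce the statement to a single scalar inequality and then read that off from Theorem \ref{rmarkqi}. Asymptotically the two estimators are $\text{WM}_{\epsilon,\gamma}=\gamma\epsilon\,Q(\gamma\epsilon)+\int_{\gamma\epsilon}^{1-\epsilon}Q(u)\,du+\epsilon\,Q(1-\epsilon)$ and $\text{TM}_{\epsilon,\gamma}=\frac{1}{1-\epsilon-\gamma\epsilon}\int_{\gamma\epsilon}^{1-\epsilon}Q(u)\,du$. Writing $L=1-\epsilon-\gamma\epsilon$ and $I=\int_{\gamma\epsilon}^{1-\epsilon}Q(u)\,du=L\,\text{TM}_{\epsilon,\gamma}$, and using $1-L=\epsilon(1+\gamma)$, a one-line computation gives $\text{WM}_{\epsilon,\gamma}-\text{TM}_{\epsilon,\gamma}=\epsilon\big(\gamma\,Q(\gamma\epsilon)+Q(1-\epsilon)-(1+\gamma)\,\text{TM}_{\epsilon,\gamma}\big)$. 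Hence, for $\epsilon>0$ (the case $\epsilon=0$ being the trivial identity at the mean), it suffices to prove the scalar inequality $\gamma\,Q(\gamma\epsilon)+Q(1-\epsilon)\ge(1+\gamma)\,\text{TM}_{\epsilon,\gamma}$.

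For the first assertion ($0\le\gamma\le1$, only the $\gamma$-trimming inequality assumed) I would split the left-hand side as $\gamma\,Q(\gamma\epsilon)+Q(1-\epsilon)=\gamma\big(Q(\gamma\epsilon)+Q(1-\epsilon)\big)+(1-\gamma)Q(1-\epsilon)=2\gamma\,\text{QA}(\epsilon,\gamma)+(1-\gamma)Q(1-\epsilon)$. Theorem \ref{rmarkqi} gives $\text{QA}(\epsilon,\gamma)\ge\text{TM}_{\epsilon,\gamma}$, and since $Q$ is nondecreasing, $Q(1-\epsilon)$ is the largest value of $Q$ on the averaging window $[\gamma\epsilon,1-\epsilon]$, so $Q(1-\epsilon)\ge\text{TM}_{\epsilon,\gamma}$. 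Because $\gamma\ge0$ and $1-\gamma\ge0$, combining the two bounds yields $2\gamma\,\text{TM}_{\epsilon,\gamma}+(1-\gamma)\text{TM}_{\epsilon,\gamma}=(1+\gamma)\text{TM}_{\epsilon,\gamma}$, which is exactly the scalar inequality. This is the entire argument for $0\le\gamma\le1$, and it uses nothing beyond the trimming inequality.

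For the second assertion (any $\gamma\ge0$, full $\gamma$-orderliness assumed) the decomposition above breaks down once $1-\gamma<0$, so I would pass to a differential reading. The substitutions $u=\gamma\epsilon_0$ and $v=1-\epsilon_0$ convert the window integral into $\int_{\gamma\epsilon}^{1-\epsilon}Q(u)\,du=\int_{\epsilon}^{1/(1+\gamma)}\big(\gamma\,Q(\gamma\epsilon_0)+Q(1-\epsilon_0)\big)\,d\epsilon_0$, so the scalar inequality says precisely that $g(\epsilon_0):=\gamma\,Q(\gamma\epsilon_0)+Q(1-\epsilon_0)$ at the left endpoint dominates its average over $[\epsilon,\tfrac{1}{1+\gamma}]$. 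Equivalently, with $p^\ast=\tfrac{\gamma}{1+\gamma}$, this is the area inequality $\int_{p^\ast}^{1-\epsilon}\big(Q(1-\epsilon)-Q(v)\big)\,dv\ge\int_{\gamma\epsilon}^{p^\ast}\big(Q(u)-Q(\gamma\epsilon)\big)\,du$: the right-tail deficit must dominate the left-tail excess. Introducing $A(\epsilon_0)=Q(\gamma\epsilon_0)-Q(\gamma\epsilon)$ and $B(\epsilon_0)=Q(1-\epsilon)-Q(1-\epsilon_0)$, the differential form of $\gamma$-orderliness, $\gamma\,Q'(\gamma\epsilon_0)\le Q'(1-\epsilon_0)$, is exactly $A'\le B'$; since $A(\epsilon)=B(\epsilon)=0$ this integrates to $A\le B$ pointwise, and after the symmetrizing substitution $s=\gamma(1-t)$ the whole inequality collapses to $\int_{p^\ast}^{1-\epsilon}(t-p^\ast)\big(Q'(t)-\gamma^2 Q'(\gamma(1-t))\big)\,dt\ge0$.

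The main obstacle is this last factor. Setting $\epsilon_0=1-t$, the bracket is $Q'(1-\epsilon_0)-\gamma^2 Q'(\gamma\epsilon_0)$, and for $0\le\gamma\le1$ orderliness already delivers $Q'(1-\epsilon_0)\ge\gamma Q'(\gamma\epsilon_0)\ge\gamma^2 Q'(\gamma\epsilon_0)$, so the integrand is nonnegative and the claim follows. For $\gamma>1$, however, the target slope bound $Q'(1-\epsilon_0)\ge\gamma^2 Q'(\gamma\epsilon_0)$ carries a surplus factor $\gamma$ beyond what first-order $\gamma$-orderliness supplies, and the $\gamma$-symmetric family of Theorem \ref{gsdqi} is the sharp stress test: there $\gamma Q'(\gamma\epsilon_0)=Q'(1-\epsilon_0)$ identically, so the integrand equals $(1-\gamma)Q'(1-\epsilon_0)(t-p^\ast)$, which is negative when $\gamma>1$. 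I would therefore expect the $\gamma>1$ regime to be the crux of the argument and to demand either a genuinely global estimate that exploits the weight $(t-p^\ast)$ rather than a pointwise slope comparison, or a strengthening of the hypothesis (for instance higher-order orderliness); and I would begin by settling the behaviour on exactly this $\gamma$-symmetric boundary before attempting the general case.
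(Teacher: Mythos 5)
Your proof of the first assertion is correct, and it is in substance the paper's own argument: the paper likewise multiplies the conclusion of Theorem \ref{rmarkqi} by $2\gamma\epsilon$ and then uses $0\leq\gamma\leq1$ to pass from the symmetric weights $\gamma\epsilon\left(Q(\gamma\epsilon)+Q(1-\epsilon)\right)$ to the Winsorized weights $\gamma\epsilon Q(\gamma\epsilon)+\epsilon Q(1-\epsilon)$. In fact your rearrangement is tighter than the paper's write-up. The paper's displayed chain asserts two intermediate inequalities, and the second one, $\int_{\gamma\epsilon}^{1-\epsilon}Q(u)\,du+\gamma\epsilon\left(Q(\gamma\epsilon)+Q(1-\epsilon)\right)\geq\frac{1}{1-\epsilon-\gamma\epsilon}\int_{\gamma\epsilon}^{1-\epsilon}Q(u)\,du$, is false as stated for $\gamma<1$: for the uniform distribution on $[0,1]$ with $\gamma=0$, $\epsilon=\frac{1}{4}$, the left side is $\frac{9}{32}$ while the right side is $\frac{3}{8}$. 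Only the combination of the two steps holds, and it holds precisely because of the fact you isolate and the paper never states: $Q(1-\epsilon)\geq\mathrm{TM}_{\epsilon,\gamma}$, since $Q(1-\epsilon)$ is the supremum of $Q$ on the averaging window (equivalently, the paper's chain becomes valid only after the unstated location normalization $\mathrm{TM}=0$). So for $0\leq\gamma\leq1$ your decomposition $\mathrm{WM}-\mathrm{TM}=\epsilon\left(2\gamma\,\mathrm{QA}+(1-\gamma)Q(1-\epsilon)-(1+\gamma)\mathrm{TM}\right)$ is the paper's idea, correctly repaired.

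For the second assertion the paper offers no main-text proof (it defers to Theorem 0.3 of the SI Text), so there is nothing to compare directly; but you stopped one step short of what your own analysis delivers. All of your reductions are exact equivalences, not one-sided estimates: $\mathrm{WM}-\mathrm{TM}=\frac{\epsilon}{\frac{1}{1+\gamma}-\epsilon}\left(\int_{p^\ast}^{1-\epsilon}\left(Q(1-\epsilon)-Q(v)\right)dv-\int_{\gamma\epsilon}^{p^\ast}\left(Q(u)-Q(\gamma\epsilon)\right)du\right)$, and the integration by parts and substitution to the collapsed integrand are identities. Hence on a $\gamma$-symmetric distribution with $\gamma>1$, where the integrand is identically $(1-\gamma)(t-p^\ast)Q'(t)\leq0$, you do not merely lose your method — you get $\mathrm{WM}<\mathrm{TM}$ outright, and by Theorem \ref{gsdqi} such distributions satisfy $\gamma$-orderliness (weakly, with $\mathrm{QA}$ constant). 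Concretely, the density equal to $1$ on $[0,\frac{2}{3}]$ and $\frac{1}{2}$ on $[\frac{2}{3},\frac{4}{3}]$ is $2$-symmetric, and at $\gamma=2$, $\epsilon=\frac{1}{5}$ one computes $\mathrm{TM}=\frac{28}{45}$, $\mathrm{WM}=\frac{134}{225}$, so $\mathrm{WM}-\mathrm{TM}=-\frac{2}{75}<0$; mixing in a small multiple of, say, the exponential quantile function makes $\mathrm{QA}$ strictly decreasing while preserving the sign, so the violation is not an artifact of the boundary case. Your instinct that the $\gamma>1$ regime ``demands a strengthening of the hypothesis'' is therefore exactly right: $\gamma$-orderliness as defined cannot imply the Winsorization inequality for all $\gamma\geq0$, and whatever extra assumptions the SI proof uses, the main-text claim does not survive your $\gamma$-symmetric stress test. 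The gap in your proposal is thus not a missing argument but a missed conclusion: you should have evaluated your collapsed integral on the family you identified and reported the counterexample rather than leaving the crux open.
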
\begin{proof}According to Theorem \ref{rmarkqi}, $\frac{Q\left(\gamma\epsilon\right)+Q\left(1-\epsilon\right)}{2}\geq\frac{1}{1-\epsilon-\gamma\epsilon}\int_{\gamma\epsilon}^{1-\epsilon}Q\left(u\right)du  \Leftrightarrow\gamma\epsilon\left(Q\left(\gamma\epsilon\right)+Q\left(1-\epsilon\right)\right)\geq(\frac{2\gamma\epsilon}{1-\epsilon-\gamma\epsilon})\int_{\gamma\epsilon}^{1-\epsilon}Q\left(u\right)du$. Then, if $0\leq\gamma\leq1$,$\left(1-\frac{1}{1-\epsilon-\gamma\epsilon}\right)\int_{\gamma\epsilon}^{1-\epsilon}Q\left(u\right)du+\gamma\epsilon\left(Q\left(\gamma\epsilon\right)+Q\left(1-\epsilon\right)\right)\geq0 \Rightarrow\int_{\gamma\epsilon}^{1-\epsilon}Q\left(u\right)du+\gamma\epsilon Q\left(\gamma\epsilon\right)+\epsilon Q\left(1-\epsilon\right)\geq\int_{\gamma\epsilon}^{1-\epsilon}Q\left(u\right)du+\gamma\epsilon\left(Q\left(\gamma\epsilon\right)+Q\left(1-\epsilon\right)\right)\geq\frac{1}{1-\epsilon-\gamma\epsilon}\int_{\gamma\epsilon}^{1-\epsilon}Q\left(u\right)du$, the proof of the first assertion is complete. The second assertion is established in Theorem 0.3. in the SI Text.\end{proof}

Replacing the TM in the $\gamma$-trimming inequality with WA forms the definition of the $\gamma$-weighted inequality. The $\gamma$-orderliness also implies the $\gamma$-Winsorization inequality when $0\leq\gamma\leq1$, as proven in the SI Text. The same rationale as presented in Theorem \ref{lst}, for a location-scale distribution characterized by a location parameter $\mu$ and a scale parameter $\lambda$, asymptotically, any $\mathrm{WA}(\epsilon,\gamma)$ can be expressed as $\lambda \mathrm{WA}_{0}(\epsilon,\gamma)+\mu$, where $\mathrm{WA}_{0}(\epsilon,\gamma)$ is an function of $Q_0(p)$ according to the definition of the weighted average. Adhering to the rationale present in Theorem \ref{lst}, for any probability distribution within a location-scale family, a necessary and sufficient condition for whether it follows the $\gamma$-weighted inequality is whether the family of probability distributions also adheres to the $\gamma$-weighted inequality. 

To construct weighted averages based on the $\nu$th $\gamma$-orderliness and satisfying the corresponding weighted inequality, when $0\leq\gamma\leq1$, let $\mathcal{B}_i=\int_{i\epsilon}^{(i+1)\epsilon}\text{QA}\left(u,\gamma\right)du$, $ka=k\epsilon+c$. From the $\gamma$-orderliness for a right-skewed distribution, it follows that, $-\frac{\partial\mathrm{QA}}{\partial\epsilon}\geq0\Leftrightarrow \forall 0\leq a\leq2a\leq\frac{1}{1+\gamma}, -\frac{\left(\mathrm{QA}\left(2a,\gamma\right)-\mathrm{QA}\left(a,\gamma\right)\right)}{a}\geq0  \Rightarrow \mathcal{B}_i-\mathcal{B}_{i+1}\geq0$, if $0\leq\gamma\leq1$. Suppose that $\mathcal{B}_i=\mathcal{B}_0$. Then, the $\epsilon$,$\gamma$-block Winsorized mean, is defined as \begin{align*}\mathrm{BWM}_{\epsilon,\gamma,n}\coloneqq\frac{1}{n}\left(\sum_{i=n\gamma\epsilon+1}^{\left(1-\epsilon\right)n}X_i+\sum_{i=n\gamma\epsilon+1}^{2n\gamma\epsilon+1}X_i+\sum_{i=\left(1-2\epsilon\right)n}^{\left(1-\epsilon\right)n}X_i\right)\text{,}\end{align*} which is double weighting the leftest and rightest blocks having sizes of $\gamma\epsilon n$ and $\epsilon n$, respectively. As a consequence of $\mathcal{B}_i-\mathcal{B}_{i+1}\geq0$, the $\gamma$-block Winsorization inequality is valid, provided that $0\leq\gamma\leq1$. The block Winsorized mean uses two blocks to replace the trimmed parts, not two single quantiles. The subsequent theorem provides an explanation for this difference.

\begin{theorem}\label{bwm}Asymptotically, for a right-skewed distribution following the $\gamma$-orderliness, the Winsorized mean is always greater than or equal to the corresponding block Winsorized mean with the same $\epsilon$ and $\gamma$, for all $0\leq\epsilon\leq\frac{1}{1+\gamma}$, provided that $0\leq\gamma\leq1$.\end{theorem}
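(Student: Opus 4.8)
The plan is to pass to the asymptotic (population) representations of both estimators as integrals of the quantile function $Q$, where the two estimators share an identical ``middle'' contribution and differ only in how they fill in the trimmed tails. Writing the $\epsilon,\gamma$-Winsorized mean as $\text{WM} = \int_{\gamma\epsilon}^{1-\epsilon}Q(u)\,du + \gamma\epsilon\,Q(\gamma\epsilon) + \epsilon\,Q(1-\epsilon)$ (the same form already used in the proof of Theorem \ref{wti}) and the block Winsorized mean as $\text{BWM} = \int_{\gamma\epsilon}^{1-\epsilon}Q(u)\,du + \int_{\gamma\epsilon}^{2\gamma\epsilon}Q(u)\,du + \int_{1-2\epsilon}^{1-\epsilon}Q(u)\,du$, both of total weight one, the common middle integral cancels upon subtraction. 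This reduces the claim to $A + B \ge 0$, where $A = \gamma\epsilon\,Q(\gamma\epsilon) - \int_{\gamma\epsilon}^{2\gamma\epsilon}Q(u)\,du$ and $B = \epsilon\,Q(1-\epsilon) - \int_{1-2\epsilon}^{1-\epsilon}Q(u)\,du$.

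First I would record the signs of the two pieces using only monotonicity of $Q$: since $Q(u) \ge Q(\gamma\epsilon)$ on $[\gamma\epsilon, 2\gamma\epsilon]$ we get $A \le 0$, and since $Q(u) \le Q(1-\epsilon)$ on $[1-2\epsilon, 1-\epsilon]$ we get $B \ge 0$. Thus the entire content is the inequality $B \ge -A = |A|$, i.e.\ the right-tail replacement must dominate the left one. To compare them I would bring both onto a common parameter $s \in [0,\epsilon]$ by substitution, obtaining $|A| = \gamma\int_0^\epsilon\bigl(Q(\gamma\epsilon+\gamma s)-Q(\gamma\epsilon)\bigr)\,ds$ (the factor $\gamma$ arising from $du = \gamma\,ds$) and $B = \int_0^\epsilon\bigl(Q(1-\epsilon)-Q(1-\epsilon-s)\bigr)\,ds$.

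The crux is then a pointwise comparison of the two integrands for each fixed $s$. Here I would invoke $\gamma$-orderliness in its quantile-value form: for $\epsilon_1 \le \epsilon_2$ in $[0,\tfrac{1}{1+\gamma}]$, $Q(\gamma\epsilon_1)+Q(1-\epsilon_1) \ge Q(\gamma\epsilon_2)+Q(1-\epsilon_2)$, equivalently $Q(1-\epsilon_1)-Q(1-\epsilon_2) \ge Q(\gamma\epsilon_2)-Q(\gamma\epsilon_1)$. Applying this with $\epsilon_1=\epsilon$ and $\epsilon_2=\epsilon+s$ yields $Q(1-\epsilon)-Q(1-\epsilon-s) \ge Q(\gamma\epsilon+\gamma s)-Q(\gamma\epsilon)$, so the integrand of $B$ already dominates the unscaled bracket; the factor $\gamma$ is then absorbed using $0\le\gamma\le1$ together with $Q(\gamma\epsilon+\gamma s)-Q(\gamma\epsilon)\ge 0$. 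Integrating the pointwise inequality over $s\in[0,\epsilon]$ gives $B \ge |A|$, whence $\text{WM}-\text{BWM} = A+B \ge 0$.

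I expect the main obstacle to be the bookkeeping at the domain boundary rather than the inequality itself: the orderliness step is applied at $\epsilon_2 = \epsilon+s$ with $s$ as large as $\epsilon$, so it requires $2\epsilon \le \tfrac{1}{1+\gamma}$, which is exactly the regime in which the two doubled blocks $[\gamma\epsilon,2\gamma\epsilon]$ and $[1-2\epsilon,1-\epsilon]$ fit inside $[\gamma\epsilon,1-\epsilon]$ without overlapping. I would therefore carry out the argument on this natural domain and treat the remaining range $\tfrac{1}{2(1+\gamma)} < \epsilon \le \tfrac{1}{1+\gamma}$ separately, according to the block-merging convention used to define $\text{BWM}$. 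The second point deserving care is the role of the hypothesis $0\le\gamma\le1$: beyond $\gamma$-orderliness itself, it enters my argument only in the final step, where the factor $\gamma$ is absorbed via $\gamma \le 1$, and I would confirm that this is the sole place the restriction $\gamma \le 1$ is genuinely needed.
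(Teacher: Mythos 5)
Your proposal is correct and takes essentially the same route as the paper's proof: after cancelling the common middle integral, the paper pairs the two tail blocks via the bijection $y(x)=1-\frac{x}{\gamma}$ from $[\gamma\epsilon,2\gamma\epsilon]$ to $[1-2\epsilon,1-\epsilon]$ --- exactly your parametrization $x=\gamma(\epsilon+s)$, $y=1-\epsilon-s$ --- applies $\gamma$-orderliness pointwise, and integrates. If anything, your version is the more careful one: you make explicit the Jacobian factor $\gamma$ whose absorption is the sole place $\gamma\le 1$ is needed (the paper only gestures at this with the remark that ``the upper limits and lower limits of the integrations are different''), and you flag the boundary constraint $2\epsilon\le\frac{1}{1+\gamma}$ required for the pointwise orderliness step, which the paper leaves implicit.
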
\begin{proof}From the definitions of BWM and WM, the statement necessitates $\int_{\gamma\epsilon}^{1-\epsilon}Q\left(u\right)du+\gamma\epsilon Q\left(\gamma\epsilon\right)+\epsilon Q\left(1-\epsilon\right)\geq\int_{\gamma\epsilon}^{1-\epsilon}Q\left(u\right)du+\int_{\gamma\epsilon}^{2\gamma\epsilon}Q\left(u\right)du+\int_{1-2\epsilon}^{1-\epsilon}Q\left(u\right)du \Leftrightarrow \gamma\epsilon Q\left(\gamma\epsilon\right)+\epsilon Q\left(1-\epsilon\right)\geq\int_{\gamma\epsilon}^{2\gamma\epsilon}Q\left(u\right)du+\int_{1-2\epsilon}^{1-\epsilon}Q\left(u\right)du$. Define $\text{WM}l(x)=Q\left(\gamma\epsilon\right)$ and $\text{BWM}l(x)=Q\left(x\right)$. In both functions, the interval for $x$ is specified as $[\gamma\epsilon,2\gamma\epsilon]$. Then, define $\text{WM}u(y)=Q\left(1-\epsilon\right)$ and $\text{BWM}u(y)=Q\left(y\right)$. In both functions, the interval for $y$ is specified as $[1-2\epsilon,1-\epsilon]$. The function $y: [\gamma\epsilon,2\gamma\epsilon] \rightarrow [1-2\epsilon,1-\epsilon]$ defined by $y(x)=1-\frac{x}{\gamma}$ is a bijection. $\text{WM}l(x)+\text{WM}u(y(x))=Q\left(\gamma\epsilon\right)+Q\left(1-\epsilon\right)\geq \text{BWM}l(x)+\text{BWM}u(y(x))=Q\left(x\right)+Q\left(1-\frac{x}{\gamma}\right)$ is valid for all $x\in[\gamma\epsilon,2\gamma\epsilon]$, according to the definition of $\gamma$-orderliness. Integration of the left side yields, $\int_{\gamma\epsilon}^{2\gamma\epsilon}\left(\mathrm{WM}l\left(u\right)+\mathrm{WM} u\left(y\left(u\right)\right)\right)du=\int_{\gamma\epsilon}^{2\gamma\epsilon}Q\left(\gamma\epsilon\right)du+\int_{y\left(\gamma\epsilon\right)}^{y\left(2\gamma\epsilon\right)}Q\left(1-\epsilon\right)du=\int_{\gamma\epsilon}^{2\gamma\epsilon}Q\left(\gamma\epsilon\right)du+\int_{1-2\epsilon}^{1-\epsilon}Q\left(1-\epsilon\right)du=\gamma\epsilon Q\left(\gamma\epsilon\right)+\epsilon Q\left(1-\epsilon\right)$, while integration of the right side yields $\int_{\gamma\epsilon}^{2\gamma\epsilon}\left(\mathrm{BWM}l\left(x\right)+\mathrm{BWM} u\left(y\left(x\right)\right)\right)dx=\int_{\gamma\epsilon}^{2\gamma\epsilon}Q\left(u\right)du+\int_{\gamma\epsilon}^{2\gamma\epsilon}Q\left(1-\frac{x}{\gamma}\right)dx=\int_{\gamma\epsilon}^{2\gamma\epsilon}Q\left(u\right)du+\int_{1-2\epsilon}^{1-\epsilon}Q\left(u\right)du$, which are the left and right sides of the desired inequality. Given that the upper limits and lower limits of the integrations are different for each term, the condition $0\leq\gamma\leq1$ is necessary for the desired inequality to be valid.

\end{proof} 
From the second $\gamma$-orderliness for a right-skewed distribution, $\frac{\partial^{2}\mathrm{QA}}{\partial^{2}\epsilon}\geq0\Rightarrow \forall 0\leq a\leq2a\leq3a\leq\frac{1}{1+\gamma}, \frac{1}{a}\left(\frac{\left(\mathrm{QA}\left(3a,\gamma\right)-\mathrm{QA} \left(2a,\gamma\right)\right)}{a}-\frac{\left(\mathrm{QA}\left(2a,\gamma\right)-\mathrm{QA} \left(a,\gamma\right)\right)}{a}\right)\geq0 \Rightarrow$ if $0\leq\gamma\leq1$, $\mathcal{B}_i -2\mathcal{B}_{i+1}+ \mathcal{B}_{i+2}\geq 0.$ $\text{SM}_{\epsilon}$ can thus be interpreted as assuming $\gamma=1$ and replacing the two blocks, $\mathcal{B}_i + \mathcal{B}_{i+2}$ with one block $2\mathcal{B}_{i+1}$. From the $\nu$th $\gamma$-orderliness for a right-skewed distribution, the recurrence relation of the derivatives naturally produces the alternating binomial coefficients, \begin{align*}(-1)^{\nu}\frac{\partial^{\nu}\text{QA}}{\partial\epsilon^{\nu}}\geq0\Rightarrow \forall 0\leq a\leq\ldots\leq(\nu+1)a\leq\frac{1}{1+\gamma},\\ \frac{(-1)^{\nu}}{a}\left(\frac{\frac{\mathrm{QA}\left(\nu a+a,\gamma\right) \ddots }{a}-\frac{\iddots\mathrm{QA}\left(2a,\gamma\right) }{a}}{a}-\frac{\frac{\mathrm{QA}\left(\nu a,\gamma\right) \ddots }{a}-\frac{\iddots\mathrm{QA}\left(a,\gamma\right) }{a}}{a}\right)\\\geq0\Leftrightarrow\frac{(-1)^{\nu}}{a^{\nu}}\left(\sum_{j=0}^{\nu}{\left(-1\right)^j\binom{\nu}{j}\mathrm{QA}\left(\left(\nu-j+1\right)a,\gamma\right)}\right)\geq0\\\Rightarrow \text{if } 0\leq\gamma\leq1,\sum_{j=0}^{\nu}{\left(-1\right)^j\binom{\nu}{j}\mathcal{B}_{i+j}}\geq 0.\end{align*}Based on the $\nu$th orderliness, the $\epsilon$,$\gamma$-binomial mean is introduced as \begin{align*}\text{BM}_{\nu,\epsilon,\gamma,n}\coloneqq\frac{1}{n}\left(\sum_{i=1}^{\frac{1}{2}\epsilon^{-1}(\nu+1)^{-1}}{\sum_{j=0}^{\nu}{\left(1-\left(-1\right)^j\binom{\nu}{j}\right)\mathfrak{B}_{i_j}}}\right)\text{,} \end{align*}\noindent where $\mathfrak{B}_{i_j}=\sum_{l=n\gamma\epsilon (j+(i-1)(\nu+1))+1}^{n\epsilon(j+(i-1)(\nu+1)+1)}{(X_l+X_{n-l+1})}$. If $\nu$ is not indicated, it defaults to $\nu=3$. Since the alternating sum of binomial coefficients equals zero, when $\nu\ll \epsilon^{-1}$ and $\epsilon\rightarrow0$, $\text{BM}\rightarrow\mu$. The solutions for the continuity of the breakdown point is the same as that in $\text{SM}$ and not repeated here. The equalities $\text{BM}_{\nu=1,\epsilon}=\text{BWM}_{\epsilon}$ and $\text{BM}_{\nu=2,\epsilon}=\text{SM}_{\epsilon,b=3}$ hold, when $\gamma=1$ and their respective $\epsilon$s are identical. Interestingly, the biases of the $\text{SM}_{\epsilon=\frac{1}{9},b=3}$ and the $\text{WM}_{\epsilon=\frac{1}{9}}$ are nearly indistinguishable in common asymmetric unimodal distributions such as Weibull, gamma, lognormal, and Pareto (SI Dataset S1). This indicates that their robustness to departures from the symmetry assumption is practically similar under unimodality, even though they are based on different orders of orderliness. If single quantiles are used, based on the second $\gamma$-orderliness, the stratified quantile mean can be defined as \begin{align*}\text{SQM}_{\epsilon,\gamma,n}\coloneqq4\epsilon\sum_{i=1}^{\frac{1}{4\epsilon}}\frac{1}{2}(\hat{Q}_{n}\left((2i-1)\gamma\epsilon\right)+\hat{Q}_{n}\left({1-(2i-1)\epsilon}\right))\text{,} \end{align*} $\text{SQM}_{\epsilon=\frac{1}{4}}$ is the Tukey’s midhinge \cite{tukey1977exploratory}. In fact, SQM is a subcase of SM when $\gamma=1$ and $b\rightarrow\infty$, so the solution for the continuity of the breakdown point, $\frac{1}{\epsilon}\text{ mod }4\neq0$, is identical. However, since the definition is based on the empirical quantile function, no decimal issues related to order statistics will arise. The next theorem explains another advantage.

\begin{theorem}\label{sqbm}For a right-skewed second $\gamma$-ordered distribution, asymptotically, $\text{SQM}_{\epsilon,\gamma}$ is always greater or equal to the corresponding $\text{BM}_{\nu=2,\epsilon,\gamma}$ with the same $\epsilon$ and $\gamma$, for all $0\leq\epsilon\leq\frac{1}{1+\gamma}$, if $0\leq\gamma\leq1$.\end{theorem}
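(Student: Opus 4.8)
The plan is to mirror the bijection-and-pointwise-inequality strategy used in the proof of Theorem \ref{bwm}, now driven by the convexity of the quantile average rather than by orderliness alone. First I would pass to the asymptotic, population-level descriptions of both estimators in terms of $\text{QA}(u,\gamma)$. Writing $g(u)\coloneqq\text{QA}(u,\gamma)=\frac{1}{2}(Q(\gamma u)+Q(1-u))$, the stratified quantile mean becomes the midpoint-node sum $\text{SQM}_{\epsilon,\gamma}=4\epsilon\sum_{i=1}^{1/(4\epsilon)}g((2i-1)\epsilon)$, while for $\nu=2$ the coefficients $1-(-1)^{j}\binom{2}{j}$ vanish for $j=0,2$ and equal $3$ for $j=1$, so $\text{BM}_{\nu=2,\epsilon,\gamma}$ collapses onto the middle block of each consecutive triple, $\text{BM}_{\nu=2,\epsilon,\gamma}=6\sum_{i=1}^{1/(6\epsilon)}\int_{(3i-2)\epsilon}^{(3i-1)\epsilon}g(u)\,du=6\sum_{i=1}^{1/(6\epsilon)}\mathcal{B}_{3i-2}$. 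Both quantities range over $u\in[0,\tfrac12]$ and are normalized to reproduce a constant $g$; the hypothesis $0\le\gamma\le1$ is exactly what guarantees $\tfrac12\le\tfrac1{1+\gamma}$, so every node and every block lies inside the domain $[0,\tfrac1{1+\gamma}]$ on which $g$ is defined, convex, and nonincreasing.

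The engine of the comparison is that second $\gamma$-orderliness means precisely $g''\ge0$ together with $g'\le0$, or, at the block level, $\mathcal{B}_i-2\mathcal{B}_{i+1}+\mathcal{B}_{i+2}\ge0$ (the discrete convexity already obtained en route to the $\nu$th-orderliness binomial identities, valid here because $0\le\gamma\le1$). I would then view $\text{SQM}_{\epsilon,\gamma}$ and $\text{BM}_{\nu=2,\epsilon,\gamma}$ as two quadratures of $2\int_0^{1/2}g$ and compare them through convexity. The natural tool is the Hermite--Hadamard inequality: for convex $g$ each block average is bounded above by its chord (trapezoidal) value and below by its midpoint value, so the middle-block integrals in $\text{BM}$ can be dominated by combinations of nodal values of $g$. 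Matching these against the $\text{SQM}$ nodes through a monotone bijection of the sampling indices --- the analogue of the map $y(x)=1-x/\gamma$ used in Theorem \ref{bwm} --- should reduce the assertion to a single pointwise inequality among values of $g$ on a common grid, which then sums to $\text{SQM}_{\epsilon,\gamma}\ge\text{BM}_{\nu=2,\epsilon,\gamma}$.

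The hard part will be the index alignment. The $\text{SQM}$ sum has $\tfrac1{4\epsilon}$ nodes while the $\text{BM}$ sum has $\tfrac1{6\epsilon}$ blocks, a $3{:}2$ mismatch, so no term-by-term pairing exists; the comparison must instead be carried out on the coarser common grid of width $6\epsilon$ (three $\text{SQM}$ nodes against two $\text{BM}$ middle-blocks per super-block). The resulting local functional is delicate: a crude chord/midpoint bound loses too much to be decisive, so I expect to need a Peano-kernel representation of the per-super-block functional --- or, equivalently, an Abel summation against the discrete convexity $\mathcal{B}_i-2\mathcal{B}_{i+1}+\mathcal{B}_{i+2}\ge0$ --- in order to keep the kernel nonnegative after invoking \emph{both} $g''\ge0$ and $g'\le0$. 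Verifying that nonnegativity, and confirming that the breakdown-point continuity adjustments to the block structure do not disturb it, is where the real effort lies; the endpoint case of an affine $g$ (a $\gamma$-symmetric distribution, where the two estimators coincide) serves as a consistency check throughout.
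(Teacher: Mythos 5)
Your skeleton is the same one the paper uses: the paper's proof likewise reduces both estimators to the folded quantile-average function $g(u)=\text{QA}(u,\gamma)$, aligns the period-2 weighting pattern of $\text{SQM}$ ($0,1,0,1,\dots$) against the period-3 pattern of $\text{BM}_{\nu=2}$ ($0,1,0,0,1,0,\dots$) over the common period of six $\epsilon$-blocks, and disposes of the two local configurations (one aligned weighted block, and $0,1,0$ paired with $1,0,1$) by monotonicity plus ``replacing the two quantile averages with one quantile average between them,'' i.e.\ the bijection-plus-pointwise-inequality principle of Theorem \ref{bwm}. So your reduction $\text{SQM}=4\epsilon\sum_i g((2i-1)\epsilon)$, $\text{BM}_{\nu=2}=6\sum_i\mathcal{B}_{3i-2}$, your $3{:}2$ super-block alignment, and your convexity engine are exactly the paper's. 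Where you differ is that the paper asserts the local comparison closes the argument, while you correctly suspect it cannot: the aligned pairing matches an $\text{SQM}$ node of weight $4\epsilon$ against a $\text{BM}$ block of weight $6\epsilon$, and the $2\epsilon$ deficit sits at a larger value of $g$ than the $2\epsilon$ surplus returned by the triple, so the per-super-block bound obtained from $g'\leq0$ and $g''\geq0$ alone has the wrong sign.

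The genuine gap is the step you defer --- nonnegativity of the Peano kernel --- and it is not merely unverified but false, so no Abel-summation bookkeeping can complete your plan (nor, for that matter, the paper's sketch, which silently drops the weight mismatch). Both quadratures are exact on affine $g$, so $\text{SQM}-\text{BM}=\int_0^{1/2}D(t)\,g''(t)\,dt$, where $D(t)$ is the value of $\text{SQM}-\text{BM}$ on the kink $g_t(u)=(t-u)_+$, itself an admissible (convex, nonincreasing) profile. Take $t=3\epsilon$: the only contributing $\text{SQM}$ node is $u=\epsilon$, giving $4\epsilon\cdot2\epsilon=8\epsilon^2$, while $\text{BM}$ gives $6\int_\epsilon^{2\epsilon}(3\epsilon-u)\,du=9\epsilon^2$, which equals the true value $2\int_0^{1/2}g_t=9\epsilon^2$ exactly, because the kink lies on a triple boundary and the middle-third rule integrates functions affine on each triple without error; hence $D(3\epsilon)=-\epsilon^2<0$. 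This is realizable by a genuine distribution: with $\gamma=1$, $Q(u)=u-\tfrac{1}{2}$ on $[0,\tfrac{1}{2}]$ and $Q(p)=2s(1-p)+p-\tfrac{1}{2}$ on $[\tfrac{1}{2},1]$, where $s$ is a smoothed $(3\epsilon-u)_+$, one checks $Q$ is increasing and $\text{QA}(u)=s(u)$, a right-skewed second-ordered distribution with $\text{SQM}_{\epsilon}-\text{BM}_{\nu=2,\epsilon}\approx-\epsilon^2<0$. For smooth, slowly varying curvature the claim does hold to leading order (midpoint error $\tfrac{\epsilon^2}{6}g''$ versus middle-third error $\tfrac{\epsilon^2}{3}g''$ per unit mass), which is why your affine consistency check and the paper's numerics raise no alarm; but second $\gamma$-orderliness permits $g''$ to concentrate where $D<0$, so the statement cannot be proved along your route or the paper's without strengthening the hypotheses.
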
\begin{proof}For simplicity, suppose the order statistics of the sample are distributed into $\epsilon^{-1}\in\mathbb{N}$ blocks in the computation of both $\text{SQM}_{\epsilon,\gamma}$ and $\text{BM}_{\nu=2,\epsilon,\gamma}$. The computation of $\text{BM}_{\nu=2,\epsilon,\gamma}$ alternates between weighting and non-weighting, let ‘0’ denote the block assigned with a weight of zero and ‘1’ denote the block assigned with a weighted of one, the sequence indicating the weighted or non-weighted status of each block is: $0,1,0,0,1,0,\dots$. Let this sequence be denoted by $a_{\text{BM}_{\nu=2,\epsilon,\gamma}}(j)$, its formula is $a_{\text{BM}_{\nu=2,\epsilon,\gamma}}(j)= \left\lfloor\frac{j \bmod 3}{2}\right\rfloor$. Similarly, the computation of $\text{SQM}_{\epsilon,\gamma}$ can be seen as positioning quantiles ($p$) at the beginning of the blocks if $0<p<\frac{1}{1+\gamma}$, and at the end of the blocks if $p>\frac{1}{1+\gamma}$. The sequence of denoting whether each block's quantile is weighted or not weighted is: $0,1,0,1,0,1,\dots$. Let the sequence be denoted by $a_{\text{SQM}_{\epsilon,\gamma}}(j)$, the formula of the sequence is $a_{\text{SQM}_{\epsilon,\gamma}}(j)= j \bmod 2$. If pairing all blocks in $\text{BM}_{\nu=2,\epsilon,\gamma}$ and all quantiles in $\text{SQM}_{\epsilon,\gamma}$, there are two possible pairings of $a_{\text{BM}_{\nu=2}}(j)$ and $a_{\text{SQM}_{\epsilon,\gamma}}(j)$. One pairing occurs when $a_{\text{BM}_{\nu=2,\epsilon,\gamma}}(j)=a_{\text{SQM}_{\epsilon,\gamma}}(j)=1$, while the other involves the sequence $0,1,0$ from $a_{\text{BM}_{\nu=2,\epsilon,\gamma}}(j)$ paired with $1,0,1$ from $a_{\text{SQM}_{\epsilon,\gamma}}(j)$. By leveraging the same principle as Theorem \ref{bwm} and the second $\gamma$-orderliness (replacing the two quantile averages with one quantile average between them), the desired result follows.\end{proof}


The biases of $\text{SQM}_{\epsilon=\frac{1}{8}}$, which is based on the second orderliness with a quantile approach, are notably similar to those of $\text{BM}_{\nu=3,\epsilon=\frac{1}{8}}$, which is based on the third orderliness with a block approach, in common asymmetric unimodal distributions (Figure \ref{fig:Biasplot}).

\section*{Hodges–Lehmann Inequality and $\gamma$-$U$-Orderliness}\label{QQ99}The Hodges–Lehmann estimator stands out as a unique robust location estimator due to its definition being substantially dissimilar from conventional $L$-estimators, $R$-estimators, and $M$-estimators. In their landmark paper, \emph{Estimates of location based on rank tests}, Hodges and Lehmann \cite{hodges1963estimates} proposed two methods for computing the H-L estimator: the Wilcoxon score $R$-estimator and the median of pairwise means. The Wilcoxon score $R$-estimator is a location estimator based on signed-rank test, or $R$-estimator, \cite{hodges1963estimates} and was later independently discovered by Sen (1963) \cite{sen1963estimation}. However, the median of pairwise means is a generalized $L$-statistic and a trimmed $U$-statistic, as classified by Serfling in his novel conceptualized study in 1984 \cite{serfling1984generalized}. Serfling further advanced the understanding by generalizing the H-L kernel as $hl_{k}\left( x_1,\ldots,x_k\right)=\frac{1}{k}\sum_{i=1}^{k}x_i$, where $k\in\mathbb{N}$ \cite{serfling1984generalized}. Here, the weighted H-L kernel is defined as $whl_{k}\left( x_1,\ldots,x_k\right)=\frac{\sum_{i=1}^{k}{x_i \mathbf{w}_i}}{\sum_{i=1}^{k}{\mathbf{w}_i}}$, where $\mathbf{w}_i$s are the weights applied to each element.

By using the weighted H-L kernel and the $L$-estimator, it is now clear that the Hodges-Lehmann estimator is an $LL$-statistic, the definition of which is provided as follows: \begin{align*}LL_{k,\epsilon,\gamma,n}\coloneqq L_{\epsilon_0,\gamma,n}\left(\text{sort}\left(\left(whl_{k}\left(X_{N_1},\Compactcdots,X_{N_k}\right)\right)_{N=1}^{\binom{n}{k}}\right)\right),\end{align*} where $L_{\epsilon_0,\gamma,n}\left(Y\right)$ represents the $\epsilon_0$,$\gamma$-$L$-estimator that uses the sorted sequence, $\text{sort}\left(\left(whl_{k}\left(X_{N_1},\Compactcdots,X_{N_k}\right)\right)_{N=1}^{\binom{n}{k}}\right)$, as input. The upper asymptotic breakdown point of $LL_{k,\epsilon,\gamma}$ is $\epsilon=1-\left(1-\epsilon_0\right)^\frac{1}{k}$, as proven in REDS III \cite{unknown1}. There are two ways to adjust the breakdown point: either by setting $k$ as a constant and adjusting $\epsilon_0$, or by setting $\epsilon_0$ as a constant and adjusting $k$. In the above definition, $k$ is discrete, but the bootstrap method can be applied to ensure the continuity of $k$, also making the breakdown point continuous. Specifically, if $k\in\mathbb{R}$, let the bootstrap size be denoted by $b$, then first sampling the original sample $(1-k+\lfloor k \rfloor)b$ times with each sample size of $\lfloor k \rfloor$, and then subsequently sampling $(1-\lceil k \rceil+k)b$ times with each sample size of $\lceil k \rceil$, $(1-k+\lfloor k \rfloor)b\in\mathbb{N}$, $(1-\lceil k \rceil+k)b\in\mathbb{N}$. The corresponding kernels are computed separately, and the pooled sorted sequence is used as the input for the $L$-estimator. Let $\mathbf{S}_k$ represent the sorted sequence. Indeed, for any finite sample, $X$, when $k=n$, $\mathbf{S}_k$ becomes a single point, $whl_{k=n}\left(X_1,\ldots,X_n\right)$. When $\mathbf{w}_i=1$, the minimum of $\mathbf{S}_k$ is $\frac{1}{k}\sum_{i=1}^{k}X_{i}$, due to the property of order statistics. The maximum of $\mathbf{S}_k$ is $\frac{1}{k}\sum_{i=1}^{k}X_{n-i+1}$. The monotonicity of the order statistics implies the monotonicity of the extrema with respect to $k$, i.e., the support of $\mathbf{S}_k$ shrinks monotonically. For unequal $\mathbf{w}_i$s, the shrinkage of the support of $\mathbf{S}_k$ might not be strictly monotonic, but the general trend remains, since all $LL$-statistics converge to the same point, as $k\to n$. Therefore, if $\frac{\sum_{i=1}^{n}{X_i\mathbf{w}_i}}{\sum_{i=1}^{n}\mathbf{w}_i}$ approaches the population mean when $n\to\infty$, all $LL$-statistics based on such consistent kernel function approach the population mean as $k\to\infty$. For example, if $whl_{k}=\text{BM}_{\nu,\epsilon_k,n=k}$, $\nu\ll \epsilon_k^{-1}$, $\epsilon_k\rightarrow0$, such kernel function is consistent. These cases are termed the $LL$-mean ($\text{LLM}_{k,\epsilon,\gamma,n}$). By substituting the $\text{WA}_{\epsilon_0,\gamma,n}$ for the $L_{\epsilon_0,\gamma,n}$ in $LL$-statistic, the resulting statistic is referred to as the weighted $L$-statistic ($\text{WL}_{k,\epsilon,\gamma,n}$). The case having a consistent kernel function is termed as the weighted $L$-mean ($\text{WLM}_{k,\epsilon,\gamma,n}$). The $w_i=1$ case of $\text{WLM}_{k,\epsilon,\gamma,n}$ is termed the weighted Hodges-Lehmann mean ($\text{WHLM}_{k,\epsilon,\gamma,n}$). The $\text{WHLM}_{k=1,\epsilon,\gamma,n}$ is the weighted average. If $k\geq2$ and the $\text{WA}$ in WHLM is set as $\text{TM}_{\epsilon_0}$, it is called the trimmed H-L mean (Figure \ref{fig:Biasplot}, $k=2$, $\epsilon_0=\frac{15}{64}$). The $\text{THLM}_{k=2,\epsilon,\gamma=1,n}$ appears similar to the Wilcoxon's one-sample statistic investigated by Saleh in 1976 \cite{ehsanes1976hodges}, which involves first censoring the sample, and then computing the mean of the number of events that the pairwise mean is greater than zero. The $\text{THLM}_{k=2,\epsilon=1-\left(1-\frac{1}{2}\right)^\frac{1}{2},\gamma=1,n}$ is the Hodges-Lehmann estimator, or more generally, a special case of the median Hodges-Lehmann mean ($m\text{HLM}_{k,n}$). $m\text{HLM}_{k,n}$ is asymptotically equivalent to the $\text{MoM}_{k,b=\frac{n}{k}}$ as discussed previously, Therefore, it is possible to define a series of location estimators, analogous to the WHLM, based on MoM. For example, the $\gamma$-median of means, $\gamma m\text{oM}_{k,b=\frac{n}{k},n}$, is defined by replacing the median in $\text{MoM}_{k,b=\frac{n}{k},n}$ with the $\gamma$-median.


The $hl_k$ kernel distribution, denoted as $F_{hl_k}$, can be defined as the probability distribution of the sorted sequence, $\text{sort}\left(\left(hl_{k}\left(X_{N_1},\Compactcdots,X_{N_k}\right)\right)_{N=1}^{\binom{n}{k}}\right)$. For any real value $y$, the cdf of the $hl_k$ kernel distribution is given by: $F_{h_{\mathbf{k}}} (y) =\mathbb{P}(Y_i \leq y)$, where $Y_i$ represents an individual element from the sorted sequence. The overall $hl_k$ kernel distributions possess a two-dimensional structure, encompassing $n$ kernel distributions with varying $k$ values, from $1$ to $n$, where one dimension is inherent to each individual kernel distribution, while the other is formed by the alignment of the same percentiles across all kernel distributions. As $k$ increases, all percentiles converge to $\bar{X}$, leading to the concept of $\gamma$-$U$-orderliness:\begin{align*}(\forall k_2\geq k_1\geq 1, \text{QHLM}_{k_2,\epsilon=1-\left(\frac{\gamma}{1+\gamma}\right)^\frac{1}{k_2},\gamma} \geq \text{QHLM}_{k_1,\epsilon=1-\left(\frac{\gamma}{1+\gamma}\right)^\frac{1}{k_1},\gamma})\vee\\(\forall k_2\geq k_1\geq 1, \text{QHLM}_{k_2,\epsilon=1-\left(\frac{\gamma}{1+\gamma}\right)^\frac{1}{k_2},\gamma} \leq \text{QHLM}_{k_1,\epsilon=1-\left(\frac{\gamma}{1+\gamma}\right)^\frac{1}{k_1},\gamma}),\end{align*} where $\text{QHLM}_{k}$ sets the WA in WHLM as quantile average, with $\gamma$ being constant. The direction of the inequality depends on the relative magnitudes of $\text{QHLM}_{k=1,\epsilon,\gamma}=\text{QA}$ and $\text{QHLM}_{k=\infty,\epsilon,\gamma}=\mu$. The Hodges-Lehmann inequality can be defined as a special case of the $\gamma$-$U$-orderliness when $\gamma=1$ and quantile average is median. When $\gamma\in \{0, \infty\}$, the quantile average is $\gamma$-median, the $\gamma$-$U$-orderliness is valid for any distribution as previously shown. If $\gamma\notin\{0, \infty\}$, analytically proving the validity of the $\gamma$-$U$-orderliness for a parametric distribution is pretty challenging. As an example, the $hl_{2}$ kernel distribution has a probability density function $f_{hl_{2}}(x)=\int_{0}^{2x}2f\left(t\right)f\left(2x-t\right)dt$ (a result after the transformation of variables); the support of the original distribution is assumed to be $[0,\infty)$ for simplicity. The expected value of the H-L estimator is the positive solution of $\int_{0}^{\text{H-L}}\left(f_{hl_{2}}(s)\right)ds=\frac{1}{2}$. For the exponential distribution, $f_{hl_{2},exp}(x)=4 \lambda ^{-2} x e^{-2 \lambda^{-1} x}$, $\lambda$ is a scale parameter, $E[\text{H-L}]=\frac{-W_{-1}\left(-\frac{1}{2 e}\right)-1}{2}\lambda\approx0.839\lambda$, where $W_{-1}$ is a branch of the Lambert $W$ function which cannot be expressed in terms of elementary functions. However, when the quantile average is $\gamma$-median, the violation of the $\gamma$-$U$-orderliness is bounded under certain assumptions, as shown below. 

\begin{theorem}\label{pomb1} For any distribution with a finite second central moment, $\sigma^2$, the following concentration bound can be established for the $\gamma$-median of means,\begin{align*}\mathbb{P}\left(\gamma m\text{oM}_{k,b=\frac{n}{k},n}-\mu>\frac{t\sigma}{\sqrt k}\right)\le e^{-\frac{2n}{k}\left(\frac{1}{1+\gamma}-\frac{1}{k+t^2}\right)^2}.\end{align*} \end{theorem}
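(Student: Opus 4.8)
The plan is to reduce the event to a binomial tail and then apply a concentration inequality for sums of independent Bernoulli variables. Write $\tau = \mu + \frac{t\sigma}{\sqrt k}$, and recall that $\gamma m\text{oM}_{k,b=n/k,n}$ is by definition the $\gamma$-median, i.e. the $\frac{\gamma}{1+\gamma}$-empirical quantile, of the $b=n/k$ disjoint block means $\bar{X}_1,\ldots,\bar{X}_b$, where each block has size $k$. First I would translate exceedance of the empirical quantile into a counting statement: the $\frac{\gamma}{1+\gamma}$-quantile of the block means can exceed $\tau$ only if strictly more than a $\frac{1}{1+\gamma}$ fraction of the $\bar{X}_j$ exceed $\tau$. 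Setting $Z_j = \mathbf{1}\{\bar{X}_j > \tau\}$, the $Z_j$ are i.i.d. Bernoulli with parameter $q = \mathbb{P}(\bar{X}_j - \mu > \frac{t\sigma}{\sqrt k})$, so the target probability is bounded by $\mathbb{P}(\frac{1}{b}\sum_{j=1}^{b} Z_j > \frac{1}{1+\gamma})$.

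Second, I would control the per-block exceedance probability $q$ using only the finite second moment. Since $\bar{X}_j$ is an average of $k$ i.i.d. summands, it has mean $\mu$ and variance $\sigma^2/k$, and the one-sided Chebyshev (Cantelli) inequality gives a bound of the form $q \le \frac{\sigma^2/k}{\sigma^2/k + a^2}$ at threshold $a = \frac{t\sigma}{\sqrt k}$. This is precisely the step where the hypothesis $\sigma^2<\infty$ enters, and it is what must be massaged into the constant $\frac{1}{k+t^2}$ that appears in the statement.

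Third, assuming $q \le \frac{1}{k+t^2} \le \frac{1}{1+\gamma}$ (the last inequality delimiting the regime in which the bound is informative), I would invoke Hoeffding's inequality for the bounded independent $Z_j$, namely $\mathbb{P}(\frac{1}{b}\sum_j Z_j - q \ge s) \le e^{-2bs^2}$. Taking $s = \frac{1}{1+\gamma} - q \ge \frac{1}{1+\gamma} - \frac{1}{k+t^2} \ge 0$ and using the monotonicity of $e^{-2bs^2}$ in $s$ together with $b = n/k$ then yields exactly $e^{-\frac{2n}{k}(\frac{1}{1+\gamma} - \frac{1}{k+t^2})^2}$, completing the argument.

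The hard part will be the second step: a direct application of Cantelli to the block mean at threshold $\frac{t\sigma}{\sqrt k}$ naturally produces $\frac{1}{1+t^2}$ rather than the sharper $\frac{1}{k+t^2}$, and the two differ by a genuine factor involving $k$. Closing this gap requires exploiting the sum-of-$k$-i.i.d. structure of each block rather than treating $\bar{X}_j$ as a single variable, or equivalently replacing the crude Hoeffding step by a sharper relative-entropy Chernoff bound for the binomial tail; verifying that this produces a valid bound for \emph{every} finite-variance law, including near-Gaussian blocks where the per-block tail is smallest, is the delicate point. I would also check the quantile-to-count reduction carefully under the empirical-quantile convention, and treat the boundary regime $k+t^2 \le 1+\gamma$ separately, where the gap $\frac{1}{1+\gamma} - \frac{1}{k+t^2}$ is non-positive and Hoeffding no longer applies directly.
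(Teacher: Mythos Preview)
Your three-step outline is exactly the paper's argument: the inclusion $\{\gamma m\text{oM}>\tau\}\subset\{\sum_j Z_j\ge b/(1+\gamma)\}$, then one-sided Chebyshev (Cantelli) on each block mean, then Hoeffding on the i.i.d.\ indicators. There is no alternative decomposition or lemma in the paper beyond what you describe.

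The discrepancy you flag in step two is real, and the paper does not resolve it. At the Cantelli step the paper simply writes
\[
\mathbb{P}\!\left(\widehat{\mu_i}-\mu>\tfrac{t\sigma}{\sqrt{k}}\right)\;\le\;\frac{\sigma^2}{k\sigma^2+t^2\sigma^2}=\frac{1}{k+t^2},
\]
with no further argument. As you observe, a direct application of Cantelli to the block mean, which has variance $\sigma^2/k$ at threshold $t\sigma/\sqrt{k}$, yields $\frac{\sigma^2/k}{\sigma^2/k+t^2\sigma^2/k}=\frac{1}{1+t^2}$, not $\frac{1}{k+t^2}$. The paper offers no device (no use of the $k$-fold i.i.d.\ structure inside a block, no sharper Chernoff-type bound) to close this gap; it appears to be an arithmetic slip in the stated constant. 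So your instinct that ``the hard part'' sits precisely here is correct, and you should not expect to find the missing ingredient in the paper's own proof.
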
\begin{proof}Denote the mean of each block as $\widehat{\mu_i}$, $1\leq i\leq b$. Observe that the event $\left\{\gamma m\text{oM}_{k,b=\frac{n}{k},n}-\mu>\frac{t\sigma}{\sqrt k}\right\}$ necessitates the condition that there are at least $b(1-\frac{\gamma}{1+\gamma})$ of $\widehat{\mu_i}$s larger than $\mu$ by more than $\frac{t\sigma}{\sqrt k}$, i.e., $\left\{\gamma m\text{oM}_{k,b=\frac{n}{k},n}-\mu>\frac{t\sigma}{\sqrt k}\right\}\subset\left\{\sum_{i=1}^{b}\mathbf{1}_{\left(\widehat{\mu_i}-\mu\right)>\frac{t\sigma}{\sqrt k}}\geq b\left(1-\frac{\gamma}{1+\gamma}\right)\right\}$, where $\mathbf{1}_A$ is the indicator of event $A$. Assuming a finite second central moment, $\sigma^2$, it follows from one-sided Chebeshev’s inequality that $\mathbb{E}\left(\mathbf{1}_{\left(\widehat{\mu_i}-\mu\right)>\frac{t\sigma}{\sqrt k}}\right)=\mathbb{P}\left(\left(\widehat{\mu_i}-\mu\right)>\frac{t\sigma}{\sqrt k}\right)\leq\frac{\sigma^2}{k\sigma^2+t^2\sigma^2}$.

Given that $\mathbf{1}_{\left(\widehat{\mu_i}-\mu\right)>\frac{t\sigma}{\sqrt k}}\in[0,1]$ are independent and identically distributed random variables, according to the aforementioned inclusion relation, the one-sided Chebeshev’s inequality and the one-sided Hoeffding's inequality, $\mathbb{P}\left(\gamma m\text{oM}_{k,b=\frac{n}{k},n}-\mu>\frac{t\sigma}{\sqrt k}\right)\leq \mathbb{P}\left(\sum_{i=1}^{b}\mathbf{1}_{\left(\widehat{\mu_i}-\mu\right)>\frac{t\sigma}{\sqrt k}}\geq b\left(1-\frac{\gamma}{1+\gamma}\right)\right)=\mathbb{P}\left(\frac{1}{b}\sum_{i=1}^{b}\left(\mathbf{1}_{\left(\widehat{\mu_i}-\mu\right)>\frac{t\sigma}{\sqrt k}}-\mathbb{E}\left(\mathbf{1}_{\left(\widehat{\mu_i}-\mu\right)>\frac{t\sigma}{\sqrt k}}\right)\right)\geq\right. \nonumber \\ \qquad \left.  \left(1-\frac{\gamma}{1+\gamma}\right)- \mathbb{E}\left(\mathbf{1}_{\left(\widehat{\mu_i}-\mu\right)>\frac{t\sigma}{\sqrt k}}\right)\right)\leq e^{-2b \left(\left(1-\frac{\gamma}{1+\gamma}\right)- \mathbb{E}\left(\mathbf{1}_{\left(\widehat{\mu_i}-\mu\right)>\frac{t\sigma}{\sqrt k}}\right)\right)^2}\leq e^{-2b\left(1-\frac{\gamma}{1+\gamma}-\frac{\sigma^2}{k\sigma^2+t^2\sigma^2}\right)^2}=e^{-2b\left(\frac{1}{1+\gamma}-\frac{1}{k+t^2}\right)^2}$. \end{proof}

\begin{theorem}\label{pomb} Let $B(k,\gamma,t,n)=e^{-\frac{2n}{k}\left(\frac{1}{1+\gamma}-\frac{1}{k+t^2}\right)^2}$. If $n\in\mathbb{N}$, $\gamma\geq0$, $0\leq t^2<\gamma +1$, and $\gamma -t^2+1\leq k\leq \frac{1}{2} \sqrt{9 \gamma ^2+18 \gamma -8 \gamma  t^2-8 t^2+9}+\frac{1}{2} \left(3 \gamma -2 t^2+3\right)$, $B$ is monotonic decreasing with respect to $k$.\end{theorem}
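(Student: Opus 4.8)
The plan is to reduce the claim to a single-variable monotonicity statement. Writing $g(k)=\frac{1}{1+\gamma}-\frac{1}{k+t^2}$, observe that $B(k,\gamma,t,n)=e^{-2n\,h(k)}$ with $h(k)=\frac{1}{k}g(k)^2$. Since $n>0$ and the exponential is increasing, $B$ is decreasing in $k$ if and only if $h$ is increasing in $k$; hence it suffices to show $h'(k)\ge 0$ throughout the interval $\gamma-t^2+1\le k\le \frac12\sqrt{9\gamma^2+18\gamma-8\gamma t^2-8t^2+9}+\frac12(3\gamma-2t^2+3)$.

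First I would differentiate and factor. A direct computation gives $g'(k)=(k+t^2)^{-2}>0$ and $h'(k)=\frac{g(k)\bigl(2kg'(k)-g(k)\bigr)}{k^2}$. As $k^2>0$, the sign of $h'(k)$ is governed by the product $g(k)\bigl(2kg'(k)-g(k)\bigr)$, so I would establish that both factors are nonnegative on the interval. The first factor is immediate: $g(k)\ge 0 \Leftrightarrow k+t^2\ge 1+\gamma \Leftrightarrow k\ge \gamma-t^2+1$, which is exactly the lower endpoint of the admissible range.

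For the second factor, substituting $u=k+t^2$ and $a=\frac{1}{1+\gamma}$ yields $2kg'(k)-g(k)=\frac{3u-2t^2}{u^2}-a$, so the condition $2kg'(k)-g(k)\ge 0$ is equivalent to the quadratic inequality $au^2-3u+2t^2\le 0$. The assumption $0\le t^2<\gamma+1=\tfrac1a$ forces $at^2<1$, which guarantees a positive discriminant $9-8at^2>1$, so the quadratic has two real roots and the inequality holds between them. Translating the larger root back through $u=k+t^2$, and simplifying the radicand via the factorization $(\gamma+1)\bigl(9(\gamma+1)-8t^2\bigr)=9\gamma^2+18\gamma-8\gamma t^2-8t^2+9$, recovers precisely the stated upper bound on $k$; the same inequality $at^2<1$ shows the smaller root $u_-$ satisfies $u_-<1+\gamma$, so the constraint $g(k)\ge 0$ already subsumes the lower root. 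Consequently both factors are nonnegative on the whole interval, $h'(k)\ge 0$, and $B$ is monotonic decreasing in $k$.

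The main obstacle is purely computational: carefully identifying the larger root of $au^2-3u+2t^2$ with the closed-form upper bound in the statement, and checking that the hypothesis $0\le t^2<\gamma+1$ is exactly what makes the discriminant positive and renders the lower root inactive. Everything else is a routine derivative-and-factor argument.
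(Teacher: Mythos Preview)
Your proposal is correct and follows essentially the same route as the paper: differentiate, factor the sign condition into a linear piece (giving the lower bound $k\ge \gamma-t^2+1$) and a quadratic piece whose roots, via the quadratic formula, yield the stated upper bound. Your substitution $u=k+t^2$ streamlines the algebra, but the underlying argument and the identification of the two endpoints are identical to the paper's proof.
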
\begin{proof} Since $\frac{\partial B}{\partial k}=\left(\frac{2 n \left(\frac{1}{\gamma +1}-\frac{1}{k+t^2}\right)^2}{k^2}-\frac{4 n \left(\frac{1}{\gamma +1}-\frac{1}{k+t^2}\right)}{k \left(k+t^2\right)^2}\right)\\e^{-\frac{2 n \left(\frac{1}{\gamma +1}-\frac{1}{k+t^2}\right)^2}{k}}$ and $n\in\mathbb{N}$, $\frac{\partial B}{\partial k}\leq 0\Leftrightarrow \frac{2 n \left(\frac{1}{\gamma +1}-\frac{1}{k+t^2}\right)^2}{k^2}-\frac{4 n \left(\frac{1}{\gamma +1}-\frac{1}{k+t^2}\right)}{k \left(k+t^2\right)^2}\leq0 \Leftrightarrow \frac{2 n \left(-\gamma +k+t^2-1\right) \left(k^2-3 (\gamma +1) k+2 k t^2+t^2 \left(-\gamma +t^2-1\right)\right)}{(\gamma +1)^2 k^2 \left(k+t^2\right)^3}\leq 0 \Leftrightarrow \left(-\gamma +k+t^2-1\right) \left(k^2-3 (\gamma +1) k+2 k t^2+t^2 \left(-\gamma +t^2-1\right)\right)\\\leq 0$. When the factors are expanded, it yields a cubic inequality in terms of $k$: $k^3+k^2 \left(3 t^2-4 (\gamma +1)\right)+3 k \left(\gamma -t^2+1\right)^2+t^2 \left(\gamma -t^2+1\right)^2\leq 0$. Assuming $0\leq t^2<\gamma +1$ and $\gamma\geq0$, using the factored form and subsequently applying the quadratic formula, the inequality is valid if $\gamma -t^2+1\leq k\leq \frac{1}{2} \sqrt{9 \gamma ^2+18 \gamma -8 \gamma  t^2-8 t^2+9}+\frac{1}{2} \left(3 \gamma -2 t^2+3\right)$.\end{proof}
 
Let $X$ be a random variable and $\bar{Y} = \frac{1}{k}(Y_1 +\dots + Y_k)$ be the average of $k$ independent, identically distributed copies of $X$. Applying the variance operation gives: $\text{Var}(\bar{Y}) = \text{Var}\left(\frac{1}{k}(Y_1 + \dots + Y_k)\right)= \frac{1}{k^2}(\text{Var}(Y_1) +\dots + \text{Var}(Y_k))= \frac{1}{k^2}(k\sigma^2) = \frac{\sigma^2}{k},$ since the variance operation is a linear operator for independent variables, and the variance of a scaled random variable is the square of the scale times the variance of the variable, i.e., $\text{Var}(cX) = E[(cX - E[cX])^2] = E[(cX - cE[X])^2]=E[c^2(X - E[X])^2] = c^2 E[((X) - E[X])^2]=c^2 \text{Var}(X)$. Thus, the standard deviation of the $hl_k$ kernel distribution, asymptotically, is $\frac{\sigma}{\sqrt k}$. By utilizing the asymptotic bias bound of any quantile for any continuous distribution with a finite second central moment, $\sigma^2$ \cite{li2018worst}, a conservative asymptotic bias bound of $\gamma m{\mathrm{oM}}_{k,b=\frac{n}{k}}$ can be established as $\gamma m{\mathrm{oM}}_{k,b=\frac{n}{k}}-\mu\leq\sqrt{\frac{\frac{\gamma}{1+\gamma}}{1-\frac{\gamma}{1+\gamma}}}\sigma_{hl_k}=\sqrt\frac{\gamma}{k} \sigma$. That implies in Theorem \ref{pomb1}, $t<\sqrt\gamma$, so when $\gamma=1$, the upper bound of $k$, subject to the monotonic decreasing constraint, is $2+\sqrt{5}<\frac{1}{2}\sqrt{9+18-8t^2-8t^2+9}+\frac{1}{2}\left(3-2t^2+3\right)\leq6$, the lower bound is $1< 2-t^2 \leq2$. These analyses elucidate a surprising result: although the conservative asymptotic bound of ${\mathrm{MoM}}_{k,b=\frac{n}{k}}$ is monotonic with respect to $k$, its concentration bound is optimal when $k\in (2+\sqrt{5},6]$.

Then consider the structure within each individual $hl_k$ kernel distribution. The sorted sequence $\mathbf{S}_k$, when $k=n-1$, has $n$ elements and the corresponding $hl_{k}$ kernel distribution can be seen as a location-scale transformation of the original distribution, so the corresponding $hl_{k}$ kernel distribution is $\nu$th $\gamma$-ordered if and only if the original distribution is $\nu$th $\gamma$-ordered according to Theorem \ref{lst}. Analytically proving other cases is challenging. For example, $f'_{hl_{2}}(x)=4f\left(2x\right)f\left(0\right)+\int_{0}^{2x}4f\left(t\right)f'\left(2x-t\right)dt$, the strict negative of $f'_{hl_{2}}(x)$ is not guaranteed if just assuming $f'(x)<0$, so, even if the original distribution is monotonic decreasing, the $hl_2$ kernel distribution might be non-monotonic. Also, unlike the pairwise difference distribution, if the original distribution is unimodal, the pairwise mean distribution might be non-unimodal, as demonstrated by a counterexample given by Chung in 1953 and mentioned by Hodges and Lehmann in 1954 \cite{hodges1954matching,chung1953lois}. Theorem \ref{qabbb} implies that the violation of $\nu$th $\gamma$-orderliness within the $hl_k$ kernel distribution is also bounded, and the bound monotonically shrinks as $k$ increases because the bound is in unit of the standard deviation of the $hl_k$ kernel distribution. If all $hl_k$ kernel distributions are $\nu$th $\gamma$-ordered and the distribution itself is $\nu$th $\gamma$-ordered and $\gamma$-$U$-ordered, then the distribution is called $\nu$th $\gamma$-$U$-ordered. The following theorem highlights the significance of symmetric distribution. 

\begin{theorem}\label{whlk1} Any symmetric distribution is $\nu$th $U$-ordered. \end{theorem}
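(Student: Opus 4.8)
The plan is to unpack the definition of $\nu$th $U$-orderliness (the $\gamma=1$ instance of $\nu$th $\gamma$-$U$-orderliness) into its three constituent requirements and dispatch all of them at once via a single structural fact: averaging preserves symmetry. Concretely, I must verify (i) that every $hl_k$ kernel distribution is $\nu$th ordered, (ii) that the distribution itself is $\nu$th ordered, and (iii) that the distribution is $U$-ordered. The guiding observation is that if $X$ is symmetric about its center $m$, then the mean of $k$ independent copies, $\bar{X}_k=\frac{1}{k}\sum_{i=1}^{k}X_i$, is again symmetric about $m$; hence each $hl_k$ kernel distribution inherits the symmetry of the parent, and the analytic difficulties flagged earlier for general kernels (possible non-unimodality, intractable $f'_{hl_2}$) are entirely circumvented.

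First I would establish the symmetry-transport step. Writing $X_i=m+Z_i$ with $Z_i$ independent and symmetric about $0$, independence gives $-\sum_{i=1}^{k}Z_i \stackrel{d}{=}\sum_{i=1}^{k}Z_i$, so $\bar{X}_k-m$ is symmetric about $0$ and $\bar{X}_k$ is symmetric about $m$. Because the parent lies in $\mathcal{P}_{\mathbb{R}}$, each $\bar{X}_k$ is continuous with a strictly increasing cdf on its support, so the monotone-cdf hypothesis of Theorem \ref{sdqi} is available. Theorem \ref{sdqi} then exhibits each $hl_k$ kernel distribution (and, taking $k=1$, the parent itself) as a $\gamma$-symmetric distribution with $\gamma=1$, whereupon Theorem \ref{gsdqi} forces its QA function to be a horizontal line, so every derivative of QA vanishes and $(-1)^{j}\partial^{j}\text{QA}/\partial\epsilon^{j}\ge0$ for all $1\le j\le\nu$. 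This simultaneously settles requirements (i) and (ii), uniformly in $k$, which is exactly what the clause requiring all $hl_k$ kernel distributions to be $\nu$th ordered demands.

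For requirement (iii), the same symmetry makes the $U$-orderliness trivial. From the relation $Q_{hl_k}(1-\epsilon)=2m-Q_{hl_k}(\epsilon)$ obtained in the proof of Theorem \ref{sdqi}, the ($\gamma=1$) quantile average collapses: $\text{QA}(\epsilon,1)=\frac{1}{2}(Q_{hl_k}(\epsilon)+Q_{hl_k}(1-\epsilon))=m$ for every admissible $\epsilon$, so $\text{QHLM}_{k,\epsilon,\gamma=1}=m$ for all $k\ge1$, independently of the matched breakdown point $\epsilon=1-(1/2)^{1/k}$ and of whether the median or a general symmetric quantile average is used. Consequently $\text{QHLM}_{k_2}=\text{QHLM}_{k_1}=m$ for all $k_2\ge k_1\ge1$, so both disjuncts in the definition of $U$-orderliness hold with equality. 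Combining (i)--(iii) yields $\nu$th $U$-orderliness.

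The only genuine content lies in the symmetry-transport claim together with checking that Theorem \ref{sdqi}'s monotone-cdf hypothesis holds for every kernel; once symmetry is carried to all $hl_k$ distributions, the remainder is pure citation of Theorems \ref{sdqi} and \ref{gsdqi}, with no kernel density ever computed. If I anticipate one soft spot, it is making the continuity/strict-monotonicity of each $\bar{X}_k$ airtight (so that $2m-Q(p)=Q(1-p)$ may be inverted in Theorem \ref{sdqi}); this is immediate for a continuous parent with connected support but should be stated rather than assumed, and for the discrete class $\mathcal{P}_{\mathbb{X}}$ one would need the corresponding lattice-symmetry version of Theorem \ref{sdqi}.
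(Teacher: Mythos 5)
Your proposal is correct and takes essentially the same route as the paper: show that the average of $k$ i.i.d.\ copies of a symmetric random variable is again symmetric, then conclude via Theorems \ref{sdqi} and \ref{gsdqi} that every $hl_k$ kernel distribution has a constant QA function, so all the orderliness and $U$-orderliness requirements hold with equality. The only difference is mechanical: you transport symmetry by the direct distributional identity $-\sum_{i=1}^{k}Z_i \stackrel{d}{=} \sum_{i=1}^{k}Z_i$, whereas the paper uses the real-valuedness of characteristic functions (and your explicit verification of the $U$-orderliness clause, plus the caveat about the monotone-cdf hypothesis of Theorem \ref{sdqi}, is somewhat more careful than the paper's ``follows immediately'').
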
 
\begin{proof} 

A random variable is symmetric about zero if and only if its characteristic function is real valued. Since the characteristic function of the average of $k$ independent, identically distributed random variables is the product of the $k$th root of their individual characteristic functions : $\varphi_{\bar{Y}}(t) = \prod_{r=1}^k (\varphi_{Y_r}(t))^{\frac{1}{k}}$, \( \bar{Y} \) is symmetric. The conclusion follows immediately from the definition of $\nu$th $U$-orderliness and Theorem \ref{lst}, \ref{gsdqi}, and \ref{sdqi}.

\end{proof}

\begin{figure*}
\centering
\includegraphics[width=1\linewidth]{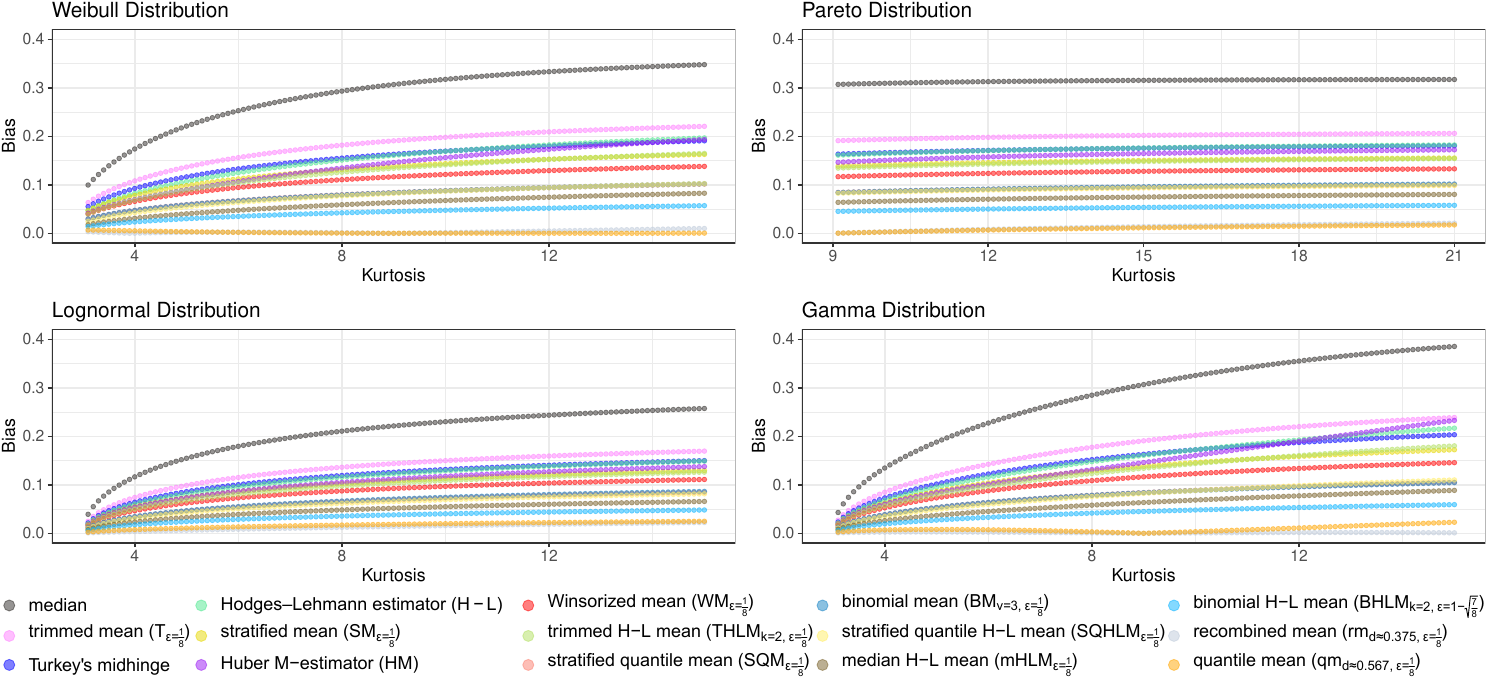}
\caption{Standardized biases (with respect to $\mu$) of fifteen robust location estimates (including two parametric estimators from REDS III \cite{li2024robust} for better comparison) on large quasi-random samples in four two-parameter right skewed unimodal distributions, as a function of the kurtosis. The methods are described in the SI Text.}
\label{fig:Biasplot}
\end{figure*}

The succeeding theorem shows that the $whl_k$ kernel distribution is invariably a location-scale distribution if the original distribution belongs to a location-scale family with the same location and scale parameters. \begin{theorem}\label{whlk} $whl_k\left(x_1=\lambda x_1+\mu,\ldots,x_k=\lambda x_k+\mu\right)=\lambda whl_k\left(x_1,\ldots,x_k\right)+\mu$. \end{theorem}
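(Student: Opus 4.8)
The plan is to establish the location--scale equivariance of the weighted Hodges--Lehmann kernel by direct substitution into its definition, treating $whl_k$ simply as a weighted arithmetic mean. First I would insert the transformed arguments into the defining formula $whl_k(x_1,\ldots,x_k)=\frac{\sum_{i=1}^{k}x_i\mathbf{w}_i}{\sum_{i=1}^{k}\mathbf{w}_i}$, writing the left-hand side as $\frac{\sum_{i=1}^{k}(\lambda x_i+\mu)\mathbf{w}_i}{\sum_{i=1}^{k}\mathbf{w}_i}$.

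The next step is to expand the numerator by distributivity into $\lambda\sum_{i=1}^{k}x_i\mathbf{w}_i+\mu\sum_{i=1}^{k}\mathbf{w}_i$ and then split the fraction over the common denominator. The first resulting term is $\lambda\frac{\sum_{i=1}^{k}x_i\mathbf{w}_i}{\sum_{i=1}^{k}\mathbf{w}_i}=\lambda\,whl_k(x_1,\ldots,x_k)$, while in the second term the weight sums in numerator and denominator cancel, leaving exactly $\mu$. Combining the two terms yields the asserted identity $\lambda\,whl_k(x_1,\ldots,x_k)+\mu$.

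The only subtlety worth flagging is that the weights $\mathbf{w}_i$ must be unaffected by the transformation, which holds because they are assigned by position (or by within-tuple rank, which a scale factor $\lambda>0$ preserves); since $\lambda$ plays the role of a scale parameter it is positive, so no rank reversal occurs and the same $\mathbf{w}_i$ multiplies $x_i$ before and after the map. Consequently the computation is purely algebraic and presents no genuine obstacle; it serves chiefly to confirm, in the spirit of Theorem \ref{lst}, that the entire $whl_k$ construction---and hence every $LL$-statistic built from it---inherits affine equivariance, which in turn permits the location and scale parameters to be treated as constants in the subsequent analysis of $\nu$th $\gamma$-$U$-orderliness.
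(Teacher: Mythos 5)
Your proof is correct and follows essentially the same route as the paper's: direct substitution into the weighted-mean formula, distributing $\lambda$ and $\mu$ over the weights, splitting the fraction, and cancelling the weight sums in the $\mu$-term. Your additional remark that the weights $\mathbf{w}_i$ are positionally assigned and undisturbed by a positive scale factor is a reasonable clarification the paper leaves implicit, but it does not change the argument.
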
 \begin{proof}$whl_k\left(x_1=\lambda x_1+\mu,\Compactcdots,x_k=\lambda x_k+\mu\right)=\frac{\sum_{i=1}^{k}\left(\lambda x_i+\mu\right)w_i}{\sum_{i=1}^{k}w_i}=\frac{\sum_{i=1}^{k}{\lambda x_iw_i}+\sum_{i=1}^{k}\mu w_i}{\sum_{i=1}^{k}w_i}=\lambda\frac{\sum_{i=1}^{k}{x_iw_i}}{\sum_{i=1}^{k}w_i}+\frac{\sum_{i=1}^{k}{\mu w_i}}{\sum_{i=1}^{k}w_i}=\lambda\frac{\sum_{i=1}^{k}{x_iw}_i}{\sum_{i=1}^{k}w_i}+\mu=\lambda whl_k\left(x_1,\Compactcdots,x_k\right)+\mu$.\end{proof} According to Theorem \ref{whlk}, the $\gamma$-weighted inequality for a right-skewed distribution can be modified as $\forall 0\leq\epsilon_{0_1}\leq\epsilon_{0_2}\leq\frac{1}{1+\gamma}, \text{WLM}_{k,\epsilon=1-\left(1-\epsilon_{0_1}\right)^\frac{1}{k},\gamma} \geq \text{WLM}_{k,\epsilon=1-\left(1-\epsilon_{0_2}\right)^\frac{1}{k},\gamma}\text{,}$ which holds the same rationale as the $\gamma$-weighted inequality defined in the last section. If the $\nu$th $\gamma$-orderliness is valid for the $whl_k$ kernel distribution, then all results in the last section can be directly implemented. From that, the binomial H-L mean (set the WA as BM) can be constructed (Figure \ref{fig:Biasplot}), while its maximum breakdown point is $\approx0.065$ if $\nu=3$. A comparison of the biases of $\mathrm{STM}_{\epsilon=\frac{1}{8}}$, $\mathrm{SWM}_{\epsilon=\frac{1}{8}}$, $\mathrm{BWM}_{\epsilon=\frac{1}{8}}$, $\text{BM}_{\nu=2, \epsilon=\frac{1}{8}}$, $\text{BM}_{\nu=3, \epsilon=\frac{1}{8}}$, $\text{SQM}_{\epsilon=\frac{1}{8}}$, $\text{THLM}_{k=2,\epsilon=\frac{1}{8}}$, $\text{WiHLM}_{k=2,\epsilon=\frac{1}{8}}$  (Winsorized H-L mean), $\text{SQHLM}_{k=\frac{2 \ln (2)-\ln (3)}{3 \ln (2)-\ln (7)},\epsilon=\frac{1}{8}}$, $m\text{HLM}_{k=\frac{\ln (2)}{3 \ln (2)-\ln (7)}, \epsilon=\frac{1}{8}}$, $\text{THLM}_{k=5,\epsilon=\frac{1}{8}}$, and $\text{WiHLM}_{k=5,\epsilon=\frac{1}{8}}$ is appropriate (Figure \ref{fig:Biasplot}, SI Dataset S1), given their same breakdown points, with $m\text{HLM}_{k=\frac{\ln (2)}{3 \ln (2)-\ln (7)}, \epsilon=\frac{1}{8}}$ exhibiting the smallest biases. Another comparison among the H-L estimator, the trimmed mean, and the Winsorized mean, all with the same breakdown point, yields the same result that the H-L estimator has the smallest biases (SI Dataset S1). This aligns with Devroye et al. (2016) and Laforgue, Clemencon, and Bertail (2019)'s seminal works that $\text{MoM}_{k,b=\frac{n}{k}}$ and $\text{MoRM}_{k,b,n}$ are nearly optimal with regards to concentration bounds for heavy-tailed distributions \cite{devroye2016sub,laforgue2019medians}. 

In 1958, Richtmyer introduced the concept of quasi-Monte Carlo simulation that utilizes low-discrepancy sequences, resulting in a significant reduction in computational expenses for large sample simulation \cite{richtmyer1958non}. Among various low-discrepancy sequences, Sobol sequences are often favored in quasi-Monte Carlo methods \cite{sobol1967distribution}. Building upon this principle, in 1991, Do and Hall extended it to bootstrap and found that the quasi-random approach resulted in lower variance compared to other bootstrap Monte Carlo procedures \cite{Do1991QuasirandomRF}. By using a deterministic approach, the variance of $m\text{HLM}_{k,n}$ is much lower than that of $\text{MoM}_{k,b=\frac{n}{k}}$ (SI Dataset S1), when $k$ is small. This highlights the superiority of the median Hodges-Lehmann mean over the median of means, as it not only can provide an accurate estimate for moderate sample sizes, but also allows the use of quasi-bootstrap, where the bootstrap size can be adjusted as needed.

\matmethods{The Monte Carlo studies were conducted using the R programming language (version 4.3.1) with the following libraries: randtoolbox \cite{dutang2015package}, Rcpp \cite{eddelbuettel2011rcpp}, Rfast \cite{papadakis2023package}, matrixStats \cite{bengtsson2023package}, foreach \cite{weston2019foreach}, and doParallel \cite{weston2015getting}. Methods and codes described in the robust statistics book by Hampel, Ronchetti, Rousseeuw, and Stahel (2011) \cite{hampel2011robust}, the book by Maronna, Martin, Yohal, and Salibián-Barrera (2019) \cite{maronna2019robust}, and the book by Mair and Wilcox (2020) \cite{mair2020robust} contributed to the initial preparation of this manuscript for comparing current popular robust methods. However, due to their incompatibility, they were not included in the final analysis. The robust location estimates presented in Figure \ref{fig:Biasplot} and SI Dataset S1 were obtained using large quasi-random samples \cite{richtmyer1958non,sobol1967distribution} with sample size 3.686 million for the Weibull, gamma, Pareto, and lognormal distributions within specified kurtosis ranges as shown in Figure \ref{fig:Biasplot} to study the large sample performance. The standard errors of these estimators were computed by approximating the sampling distribution using 1000 pseudorandom samples of size $n = 5184$ for these distribution and the generalized Gaussian distributions with the parameter settings detailed in the SI Text. ggplot2 \cite{ggplot21} was used to generate Figure 1. ChatGPT, an AI language model developed by OpenAI, was used to enhance the grammar of this paper. To deduce and verify complex mathematical expressions, both Wolfram Alpha and ChatGPT were utilized. 
}

\showmatmethods{} 

\subsection*{Data and Software Availability} Data for Figure \ref{fig:Biasplot} are given in SI Dataset S1. All codes have been deposited in \href {https://github.com/johon-lituobang/REDS_Mean}{github.com/johon-lituobang/REDS}.
\acknow{I sincerely acknowledge the insightful comments from Peter Bickel, which considerably elevating the lucidity and merit of this paper. I am also grateful to Ruodu Wang for pointing out important mistakes regarding the $\gamma$-symmetric distribution.}

\showacknow{} 

\bibliography{pnas-sample}

\begin{thebibliography}{10}

\bibitem{gauss1823theoria}
CF Gauss, {\em Theoria combinationis observationum erroribus minimis obnoxiae}.
\newblock (Henricus Dieterich), (1823).

\bibitem{li2018worst}
L Li, H Shao, R Wang, J Yang, Worst-case range value-at-risk with partial information.
\newblock {\em\protect\JournalTitle{SIAM Journal on Financial Mathematics}} \textbf{9}, 190--218 (2018).

\bibitem{bernard2020range}
C Bernard, R Kazzi, S Vanduffel, Range value-at-risk bounds for unimodal distributions under partial information.
\newblock {\em\protect\JournalTitle{Insurance: Mathematics and Economics}} \textbf{94}, 9--24 (2020).

\bibitem{daniell1920observations}
P Daniell, Observations weighted according to order.
\newblock {\em\protect\JournalTitle{American Journal of Mathematics}} \textbf{42}, 222--236 (1920).

\bibitem{tukey1960survey}
JW Tukey, A survey of sampling from contaminated distributions in {\em Contributions to probability and statistics}.
\newblock (Stanford University Press), pp. 448--485 (1960).

\bibitem{dixon1960simplified}
WJ Dixon, {Simplified Estimation from Censored Normal Samples}.
\newblock {\em\protect\JournalTitle{The Annals of Mathematical Statistics}} \textbf{31}, 385 -- 391 (1960).

\bibitem{danielak2003theory}
K Danielak, T Rychlik, Theory \& methods: Exact bounds for the bias of trimmed means.
\newblock {\em\protect\JournalTitle{Australian \& New Zealand Journal of Statistics}} \textbf{45}, 83--96 (2003).

\bibitem{bieniek2016comparison}
M Bieniek, Comparison of the bias of trimmed and winsorized means.
\newblock {\em\protect\JournalTitle{Communications in Statistics-Theory and Methods}} \textbf{45}, 6641--6650 (2016).

\bibitem{oliveira2019sub}
RI Oliveira, P Orenstein, The sub-gaussian property of trimmed means estimators.
\newblock {\em\protect\JournalTitle{Unpublished, IMPA}} \textbf{0} (2019).

\bibitem{trimmedMendelson}
G Lugosi, S Mendelson, {Robust multivariate mean estimation: The optimality of trimmed mean}.
\newblock {\em\protect\JournalTitle{The Annals of Statistics}} \textbf{49}, 393 -- 410 (2021).

\bibitem{hodges1963estimates}
J Hodges~Jr, E Lehmann, Estimates of location based on rank tests.
\newblock {\em\protect\JournalTitle{The Annals of Mathematical Statistics}} \textbf{34}, 598--611 (1963).

\bibitem{wilcoxon1945individual}
F Wilcoxon, Individual comparisons by ranking methods.
\newblock {\em\protect\JournalTitle{Biometrics Bulletin}} \textbf{1}, 80--83 (1945).

\bibitem{hampel1968contributions}
FR Hampel, {\em Contributions to the theory of robust estimation}.
\newblock (University of California, Berkeley), (1968).

\bibitem{huber1964robust}
PJ Huber, Robust estimation of a location parameter.
\newblock {\em\protect\JournalTitle{Ann. Math. Statist.}} \textbf{35}, 73--101 (1964).

\bibitem{sun2020adaptive}
Q Sun, WX Zhou, J Fan, Adaptive huber regression.
\newblock {\em\protect\JournalTitle{Journal of the American Statistical Association}} \textbf{115}, 254--265 (2020).

\bibitem{catoni2012challenging}
O Catoni, Challenging the empirical mean and empirical variance: a deviation study.
\newblock {\em\protect\JournalTitle{Annales de l'IHP Probabilit{\'e}s et statistiques}} \textbf{48}, 1148--1185 (2012).

\bibitem{XuCatoni}
P Chen, X Jin, X Li, L Xu, {A generalized Catoni’s M-estimator under finite $\alpha$-th moment assumption with $\alpha \in (1,2)$}.
\newblock {\em\protect\JournalTitle{Electronic Journal of Statistics}} \textbf{15}, 5523 -- 5544 (2021).

\bibitem{mathieu2022concentration}
T Mathieu, Concentration study of m-estimators using the influence function.
\newblock {\em\protect\JournalTitle{Electronic Journal of Statistics}} \textbf{16}, 3695--3750 (2022).

\bibitem{nemirovskij1983problem}
AS Nemirovskij, DB Yudin, {\em Problem complexity and method efficiency in optimization}.
\newblock (Wiley-Interscience), (1983).

\bibitem{jerrum1986random}
MR Jerrum, LG Valiant, VV Vazirani, Random generation of combinatorial structures from a uniform distribution.
\newblock {\em\protect\JournalTitle{Theoretical computer science}} \textbf{43}, 169--188 (1986).

\bibitem{alon1996space}
N Alon, Y Matias, M Szegedy, The space complexity of approximating the frequency moments in {\em Proceedings of the twenty-eighth annual ACM symposium on Theory of computing}.
\newblock pp. 20--29 (1996).

\bibitem{hsu2014heavy}
D Hsu, S Sabato, Heavy-tailed regression with a generalized median-of-means in {\em International Conference on Machine Learning}.
\newblock (PMLR), pp. 37--45 (2014).

\bibitem{devroye2016sub}
L Devroye, M Lerasle, G Lugosi, RI Oliveira, Sub-gaussian mean estimators.
\newblock {\em\protect\JournalTitle{The Annals of Statistics}} \textbf{44}, 2695--2725 (2016).

\bibitem{laforgue2019medians}
P Laforgue, S Cl{\'e}men{\c{c}}on, P Bertail, On medians of (randomized) pairwise means in {\em International Conference on Machine Learning}.
\newblock (PMLR), pp. 1272--1281 (2019).

\bibitem{lecue2020robust}
G LECU{\'E}, M LERASLE, Robust machine learning by median-of-means: Theory and practice.
\newblock {\em\protect\JournalTitle{The Annals of Statistics}} \textbf{48}, 906--931 (2020).

\bibitem{efron1979bootstrap}
B Efron, Bootstrap methods: Another look at the jackknife.
\newblock {\em\protect\JournalTitle{The Annals of Statistics}} \textbf{7}, 1--26 (1979).

\bibitem{bickel1981some}
PJ Bickel, DA Freedman, Some asymptotic theory for the bootstrap.
\newblock {\em\protect\JournalTitle{The annals of statistics}} \textbf{9}, 1196--1217 (1981).

\bibitem{bickel1984asymptotic}
PJ Bickel, DA Freedman, Asymptotic normality and the bootstrap in stratified sampling.
\newblock {\em\protect\JournalTitle{The annals of statistics}} \textbf{12}, 470--482 (1984).

\bibitem{helmers1990bootstrapping}
R Helmers, P Janssen, N Veraverbeke, {\em Bootstrapping U-quantiles}.
\newblock (CWI. Department of Operations Research, Statistics, and System Theory [BS]), (1990).

\bibitem{neyman1934two}
J Neyman, On the two different aspects of the representative method: The method of stratified sampling and the method of purposive selection.
\newblock {\em\protect\JournalTitle{Journal of the Royal Statistical Society}} \textbf{97}, 558--606 (1934).

\bibitem{mcintyre1952method}
G McIntyre, A method for unbiased selective sampling, using ranked sets.
\newblock {\em\protect\JournalTitle{Australian journal of agricultural research}} \textbf{3}, 385--390 (1952).

\bibitem{Breakdown1}
PL Davies, U Gather, {Breakdown and groups}.
\newblock {\em\protect\JournalTitle{The Annals of Statistics}} \textbf{33}, 977 -- 1035 (2005).

\bibitem{donoho1983notion}
DL Donoho, PJ Huber, The notion of breakdown point.
\newblock {\em\protect\JournalTitle{A festschrift for Erich L. Lehmann}} \textbf{157184} (1983).

\bibitem{stein1956efficient}
CM Stein, Efficient nonparametric testing and estimation in {\em Proceedings of the third Berkeley symposium on mathematical statistics and probability}.
\newblock Vol.{}~1, pp. 187--195 (1956).

\bibitem{bickel1982adaptive}
PJ Bickel, On adaptive estimation.
\newblock {\em\protect\JournalTitle{The Annals of Statistics}} \textbf{10}, 647--671 (1982).

\bibitem{bickel1993efficient}
P Bickel, CA Klaassen, Y Ritov, JA Wellner, {\em Efficient and adaptive estimation for semiparametric models}.
\newblock (Springer) Vol.{}~4, (1993).

\bibitem{runnenburg1978mean}
JT Runnenburg, Mean, median, mode.
\newblock {\em\protect\JournalTitle{Statistica Neerlandica}} \textbf{32}, 73--79 (1978).

\bibitem{zwet1979mean}
Wv Zwet, Mean, median, mode ii.
\newblock {\em\protect\JournalTitle{Statistica Neerlandica}} \textbf{33}, 1--5 (1979).

\bibitem{van1964convex}
WR van Zwet, {\em Convex Transformations of Random Variables: Nebst Stellingen}.
\newblock (1964).

\bibitem{pearson1895x}
K Pearson, X. contributions to the mathematical theory of evolution.—ii. skew variation in homogeneous material.
\newblock {\em\protect\JournalTitle{Philosophical Transactions of the Royal Society of London.(A.)}} \textbf{186}, 343--414 (1895).

\bibitem{bowley1926elements}
AL Bowley, {\em Elements of statistics}.
\newblock (King) No.{}~8, (1926).

\bibitem{groeneveld1984measuring}
RA Groeneveld, G Meeden, Measuring skewness and kurtosis.
\newblock {\em\protect\JournalTitle{Journal of the Royal Statistical Society: Series D (The Statistician)}} \textbf{33}, 391--399 (1984).

\bibitem{tukey1977exploratory}
JW Tukey, {\em Exploratory data analysis}.
\newblock (Reading, MA) Vol.{}~2, (1977).

\bibitem{sen1963estimation}
PK Sen, On the estimation of relative potency in dilution (-direct) assays by distribution-free methods.
\newblock {\em\protect\JournalTitle{Biometrics}} \textbf{19}, 532--552 (1963).

\bibitem{serfling1984generalized}
RJ Serfling, Generalized l-, m-, and r-statistics.
\newblock {\em\protect\JournalTitle{The Annals of Statistics}} \textbf{12}, 76--86 (1984).

\bibitem{ehsanes1976hodges}
A Ehsanes~Saleh, Hodges-lehmann estimate of the location parameter in censored samples.
\newblock {\em\protect\JournalTitle{Annals of the Institute of Statistical Mathematics}} \textbf{28}, 235--247 (1976).

\bibitem{hodges1954matching}
J Hodges, E Lehmann, Matching in paired comparisons.
\newblock {\em\protect\JournalTitle{The Annals of Mathematical Statistics}} \textbf{25}, 787--791 (1954).

\bibitem{chung1953lois}
K Chung, Sur les lois de probabilit{\'e} unimodales.
\newblock {\em\protect\JournalTitle{COMPTES RENDUS HEBDOMADAIRES DES SEANCES DE L ACADEMIE DES SCIENCES}} \textbf{236}, 583--584 (1953).

\bibitem{li2024robust}
T Li, Robust estimations from distribution structures: Iii. invariant moments (2024).

\bibitem{richtmyer1958non}
RD Richtmyer, A non-random sampling method, based on congruences, for" monte carlo" problems, (New York Univ., New York. Atomic Energy Commission Computing and Applied~…), Technical report (1958).

\bibitem{sobol1967distribution}
IM Sobol', On the distribution of points in a cube and the approximate evaluation of integrals.
\newblock {\em\protect\JournalTitle{Zhurnal Vychislitel'noi Matematiki i Matematicheskoi Fiziki}} \textbf{7}, 784--802 (1967).

\bibitem{Do1991QuasirandomRF}
KA Do, P Hall, Quasi-random resampling for the bootstrap.
\newblock {\em\protect\JournalTitle{Statistics and Computing}} \textbf{1}, 13--22 (1991).

\bibitem{dutang2015package}
MC Dutang, Package ‘randtoolbox’ (2015).

\bibitem{eddelbuettel2011rcpp}
D Eddelbuettel, R Fran{\c{c}}ois, Rcpp: Seamless r and c++ integration.
\newblock {\em\protect\JournalTitle{Journal of statistical software}} \textbf{40}, 1--18 (2011).

\bibitem{papadakis2023package}
M Papadakis, et~al., Package ‘rfast’ (2023).

\bibitem{bengtsson2023package}
H Bengtsson, et~al., Package ‘matrixstats’ (2023).

\bibitem{weston2019foreach}
S Weston, , et~al., foreach: provides foreach looping construct (2019).

\bibitem{weston2015getting}
S Weston, R Calaway, Getting started with doparallel and foreach (2015).

\bibitem{hampel2011robust}
FR Hampel, EM Ronchetti, PJ Rousseeuw, WA Stahel, {\em Robust statistics: the approach based on influence functions}.
\newblock (John Wiley \& Sons), (2011).

\bibitem{maronna2019robust}
RA Maronna, RD Martin, VJ Yohai, M Salibi{\'a}n-Barrera, {\em Robust statistics: theory and methods (with R)}.
\newblock (John Wiley \& Sons), (2019).

\bibitem{mair2020robust}
P Mair, R Wilcox, Robust statistical methods in r using the wrs2 package.
\newblock {\em\protect\JournalTitle{Behavior research methods}} \textbf{52}, 464--488 (2020).

\bibitem{ggplot21}
H Wickham, {\em ggplot2: Elegant Graphics for Data Analysis}.
\newblock (Springer-Verlag New York), (2016).

\end{thebibliography}

\end{document}